\renewcommand{\epsilon}{\varepsilon}
\newcommand{\R}{{\mathbb R}}
\renewcommand{\phi}{\varphi}
\newcommand{\hcal}{\mathcal{H}}
\newtheorem*{theono}{Theorem}
\newtheorem{theorem}{{Theorem}}[section]
\newtheorem{theo}{{Theorem}}[section]
\newtheorem{cor}[theorem]{{Corollary}}
\newtheorem{lem}[theorem]{{Lemma}}
\newtheorem{prop}[theorem]{{Proposition}}
\newenvironment{rem}{\medskip\noindent{\it Remark:\/} }{\medskip}
\newtheorem{defin}[theo]{{\sc Definition}}
\newtheorem{conj}[theorem]{Conjecture}
\newtheorem*{conj*}{Conjecture}
\newtheorem{rmk}[theorem]{Remark}
\theoremstyle{definition}
\numberwithin{equation}{section}
\def \R {\mathbb R}
\title[Balanced embedding problem III:Asymptotics]{ An infinite dimensional balanced embedding problem III: Asymptotics near infinity}
\author{Jingzhou Sun}
\thanks{The author is partially supported by NNSF of China no.11701353.}
\address{Department of Mathematics, Shantou University, Shantou City, Guangdong Province 515063, China}
\email{jzsun@stu.edu.cn}
\begin{document}

\begin{abstract}
We continue our study on the logarithmic balanced model metric initiated in our previous work. By a non-trivial refinement of the set of tools developed in our previous work, we are able to confirm partially a conjecture we made in our previous work on the asymptotic behavior of the balanced metric near infinity.
\end{abstract}

\maketitle

 \tableofcontents

\section{Introduction}
In our previous work \cite{sun-sun-2}, we initiated the investigation on the logarithmic balanced model metrics. We refer the readers to \cite{sun-sun-2} for the motivations for this series of research.

Let $\mathcal H$ be a separable complex Hilbert space, and let $\mathbb P(\mathcal H)$ be the associated projective Hilbert space, viewed as a set. We denote by $\mathfrak h(\mathcal H)$ the space of bounded self-adjoint operators on $\mathcal H$. There exists a natural injective map 
	$$\iota: \mathbb P(\mathcal H)\rightarrow \mathfrak h(\mathcal H); [z]\mapsto \frac{z\otimes z^*}{|z|^2},$$
	where $z\otimes z^*$ is the operator defined by $y\mapsto \langle y,z\rangle z,y\in \hcal$.
Consider a pair $(X, D)$ of complex manifolds, where $D$ is a divisor in $X$. Let $d\mu$ be a volume form on $X$ and $d\nu$ be a volume form on $D$. We say that a holomorphic embedding $F: X\rightarrow \mathbb P(\mathcal H)$ is $(\beta, C)$-balanced for some $\beta, C\in \mathbb R$ if
	$$ \int_X \iota(F(z)) d\mu+(1-\beta) \int_D \iota(F(z)) d\nu=C\cdot\text{Id},$$
	where $\iota(F(z))$ denotes the K\"ahler form on $\mathbb P(\mathcal H)$ induced by the Fubini-Study metric. 

  When $X$ is non-compact, this problem is of infinite-dimensional nature and very challenging.

In the case where $X$ is the complex plain and $D$ is the origin, we have proved the existence and uniqueness of the balanced model metric.
\begin{theono}[\cite{sun-sun-2}]
  For $\beta\in [0, 1)$ there is a $(\beta ,1)$-balanced embedding $F: X\rightarrow \mathbb P(\mathcal H)$ which is unique up to the action of $U(\mathcal H)$. 
\end{theono}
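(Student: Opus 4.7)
The plan is to reduce the balanced equation to an explicit countable system using rotational symmetry, then solve a truncated finite-dimensional problem by convex variational methods and pass to the limit. First, the datum $(\mathbb C,\{0\},d\mu,d\nu)$ is $S^1$-invariant under $z\mapsto e^{i\theta}z$ for the rotationally symmetric choices of $d\mu$ fixed in \cite{sun-sun-2}. If $F:\mathbb C\to \mathbb P(\mathcal H)$ is $(\beta,1)$-balanced then so is $F\circ e^{i\theta}$, and, granting the uniqueness, there must be a unitary $U_\theta\in U(\mathcal H)$ with $F\circ e^{i\theta}=U_\theta\cdot F$. Diagonalizing this $S^1$-representation on $\mathcal H$ yields an orthonormal basis $\{e_k\}_{k\ge 0}$ in which $F$ takes the monomial form $F(z)=[a_0:a_1z:a_2z^2:\cdots]$. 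Setting $b_k=|a_k|^2$, orthogonality of distinct characters of $S^1$ kills all off-diagonal entries, and the balanced equation decouples into the countable scalar system
\[
\int_{\mathbb C}\frac{b_k|z|^{2k}}{\sum_{l\ge 0}b_l|z|^{2l}}\,d\mu(z)+(1-\beta)\,\delta_{k0}=1,\qquad k=0,1,2,\ldots .
\]

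For existence I would approximate by the finite-dimensional truncation to polynomials of degree at most $N$, obtaining balanced embeddings $F_N:\mathbb C\to \mathbb P^N$. The finite analogue of the system above should be the Euler--Lagrange equation of a strictly convex ``Donaldson-type'' functional in the variables $(\log b_0,\ldots,\log b_N)$, with an extra $(1-\beta)\log b_0$ term arising from the weighted divisor contribution. Strict convexity then yields a unique critical point $b^{(N)}=(b_0^{(N)},\ldots,b_N^{(N)})$ at each level. The technical core of the argument is to obtain $N$-independent growth estimates on the coefficients $b_k^{(N)}$, of the form $b_k^{(N)}\asymp \Gamma(k+\gamma)/\alpha^k$ or similar, guaranteeing that $\sum_{l\le N} b_l^{(N)}|z|^{2l}$ converges as $N\to\infty$ to an entire function of prescribed growth and that dominated convergence applies in each of the infinitely many scalar equations.

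For uniqueness up to $U(\mathcal H)$, the monomial reduction of the first paragraph shows that it is enough to prove uniqueness of a positive solution $\{b_k\}$ to the infinite scalar system; this should follow from strict convexity of the infinite-dimensional version of the functional above, evaluated on sequences decaying at the required rate. The main obstacle is precisely this passage to the limit: because the target Hilbert space is genuinely infinite-dimensional and $X=\mathbb C$ is non-compact, there is no off-the-shelf compactness as in classical balanced metric theory, and one must establish sharp, $N$-independent control on $b_k^{(N)}$. A natural strategy is to first treat $\beta=0$ by comparison with the explicit Bargmann--Fock / Bergman reference (where the coefficients can be computed), and then propagate the control to general $\beta\in[0,1)$ via a continuity or implicit-function argument, monitoring carefully what happens as $\beta\to 1^-$.
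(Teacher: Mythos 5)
The paper you're working from does not actually reprove this theorem---it cites it as established in \cite{sun-sun-2}---so there is no internal proof to compare against directly. What can be said is that your reduction via $S^1$-equivariance to the monomial form $F(z)=[a_0:a_1z:a_2z^2:\cdots]$ and then to the scalar system in the $b_k=|a_k|^2$ is the right first move and is exactly what underlies the paper's subsequent identification of the balanced metric with an entire function $f(x)=\sum c_ix^i$ and the generating-function equation $\int_0^\infty f(sx)/f(x)\,dx=(1-s)^{-1}-\beta$.

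That said, there are two genuine gaps. First, your uniqueness argument is circular as written: you deduce the monomial form by ``granting the uniqueness,'' and then propose to prove uniqueness from strict convexity restricted to that ansatz. To fix this you should invoke the $S^1$-invariance of the functional (not of the minimizer): if the Donaldson-type functional is strictly convex and $S^1$-invariant, its unique critical point must itself be $S^1$-invariant, which \emph{yields} the monomial form rather than assumes it. Alternatively, average an arbitrary balanced solution over $S^1$ and show the averaged metric is still balanced, then compare. Second---and this is the heart of the matter---the passage from the truncated solutions $b^{(N)}$ to a genuine solution on $\mathcal H$ is left at the level of a plan (``comparison with Bargmann--Fock, then a continuity argument''). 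Because $X=\C$ is non-compact and $\mathcal H$ infinite-dimensional, one needs uniform two-sided growth bounds on $b_k^{(N)}$ of the rough shape $b_k^{(N)}\asymp \Gamma(k+\gamma)^{-1}$, uniformly in $N$, so that the partial sums $\sum_{k\le N}b_k^{(N)}x^k$ converge to an entire $f$ with $\int_0^\infty x^k/f(x)\,dx<\infty$ for every $k$ and so that the $N$th equation passes to the limit. Nothing in the proposal produces such bounds; this is precisely where the infinite-dimensional version departs from the compact Donaldson theory and cannot be waved through.

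One smaller inconsistency worth noting: with the balanced condition written as $\int_X\iota(F)\,d\mu+(1-\beta)\int_D\iota(F)\,d\nu=\mathrm{Id}$ and $d\nu$ a unit point mass, your $k=0$ equation reads $\int \frac{b_0}{\sum b_l|z|^{2l}}\,d\mu=\beta$, which for $\beta=0$ would force $b_0=0$---contradicting $f(0)=1$ and the $\beta=0$ reference solution $f(x)=e^x$. The generating-function identity in the paper instead forces $\int\frac{b_0}{f}\,dx=1-\beta$, so either the weight on $D$ is $\beta$ rather than $1-\beta$, or $d\nu$ carries a non-unit normalization. You inherited this from the paper's introduction, but if you pursue the argument you will need to pin down the normalization before the $k=0$ equation makes sense.
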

Such a balanced metric corresponds to an analytic function $f(x)=\sum_{i=0}^{\infty}c_i x^i $ satifying 
\begin{equation}\label{e-1}
	\int_0^{\infty} \frac{f(sx)}{f(x)}dx=\frac{1}{1-s}-\beta, 
	\end{equation}
	for all $s\in[0,1)$. 

 It is then both natural and important to study the asymptotics of $f(x)$ near infinity. A good understanding of the asymptotics can help us study more general balanced embedding problems and help us to understand the connection to the balanced embedding problem in the compact setting(\cite{Donaldson10, Sun, Sun2019, SS}). 
In \cite{sun-sun-2}, we made a conjecture about the asymptotic of $f(x)$ near infinity:
\begin{conj}\label{conj}
	Let $f(x)$ be the unique solution to \eqref{e-1} with $f(0)=1$, then $f$ should asymptotically be $C_\beta x^\beta e^x$ as $x\to +\infty$.
\end{conj}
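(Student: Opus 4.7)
The plan is to pass from the integral identity \eqref{e-1} to pointwise asymptotics of $\log f(x)$ by combining moment information with saddle-point analysis. First, expand both sides of \eqref{e-1} as power series in $s$ and match coefficients of $s^k$; this gives
\begin{equation*}
  \int_0^\infty \frac{dx}{f(x)} = 1-\beta, \qquad c_k \int_0^\infty \frac{x^k}{f(x)}\,dx = 1 \quad (k\geq 1),
\end{equation*}
where $c_k = f^{(k)}(0)/k!$. By a standard Mellin/Tauberian argument, the conjectural asymptotic $f(x)\sim C_\beta x^\beta e^x$ is equivalent to $c_k \sim C_\beta/\Gamma(k+1-\beta)$ as $k\to\infty$, and also to $\int_0^\infty x^{s-1}/f(x)\,dx \sim \Gamma(s-\beta)/C_\beta$ for $\Re s$ large. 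Hence the problem reduces to controlling the Mellin data of $1/f$.

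The second step is to extract the crude envelope $\log f(x) = x + o(x)$. Put $\phi = \log f$ and take $s = 1-u$ with $u$ small in \eqref{e-1}; the right-hand side has a $1/u$ pole, while the integrand equals $e^{\phi((1-u)x)-\phi(x)}$. For the integral to generate a singularity of this strength, the decay rate $\phi(x) - \phi((1-u)x)$ must be approximately $ux$ uniformly on large $x$, which forces $\phi'(x)\to 1$, hence $\phi(x) = x + o(x)$. A more careful Laplace-method analysis of the integral around its effective saddle should then upgrade the error to $O(\log x)$.

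Refining from $\phi(x) = x + O(\log x)$ to $\phi(x) = x + \beta\log x + O(1)$ should be driven by the subleading constant $-\beta$ on the right of \eqref{e-1}. Writing $\phi(x) = x + a(x)\log x + b(x)$ with $a, b$ slowly varying, the change $\phi((1-u)x)-\phi(x) \approx -ux + a(x)\log(1-u) + o(1)$ plugged into \eqref{e-1} produces $(1-u)^{a}/u = 1/u - a + O(u)$ to leading order, so matching the constant term with $-\beta$ forces $a(x)\to\beta$. This is the step where I expect the non-trivial refinement of the techniques of \cite{sun-sun-2} to enter: one needs uniform two-term saddle-point estimates with explicit control on the remainder, not merely leading-order statements.

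The hard part will be the final step, namely promoting the bound $\phi(x) - x - \beta\log x = O(1)$ to convergence toward a definite limit $\log C_\beta$. Boundedness of an oscillating quantity does not imply convergence, and pinning down $C_\beta$ appears to require information from the \emph{full} integral equation, not just its behavior near $s=1$. I anticipate that this step needs either a monotonicity or contraction argument for the normalized error $\phi(x) - x - \beta\log x$, or a compactness-plus-uniqueness argument that invokes the uniqueness assertion of \cite{sun-sun-2} to rigidify the limit. The word ``partially'' in the abstract most plausibly corresponds to stopping at the boundedness statement rather than achieving full convergence, or to establishing convergence only along a distinguished subsequence of $x\to\infty$.
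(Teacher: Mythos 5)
The statement you are attempting to prove is a conjecture that the paper itself does \emph{not} establish; the main results are strictly weaker. Theorem~\ref{thm-main} shows only that $\omega(\beta)=\lim_{x\to\infty}(\log f_\beta(x)-x)/\log x$ \emph{exists}, Theorem~\ref{thm-second} that $\omega(\beta)$ is increasing and bounded above by $1$, and the paper notes after the proof that $C_\beta=\lim_{x\to\infty}f(x)/(x^{\omega(\beta)}e^x)$ exists — but never identifies $\omega(\beta)$ with $\beta$, which is precisely the content of Conjecture~\ref{conj}. Your sketch, if it worked, would therefore be strictly stronger than the paper. The fatal gap is in your third step, not (only) your fourth.

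Your third step claims that matching the constant term $-\beta$ in \eqref{e-1} as $s\to 1^-$ forces the $\log x$ coefficient $a(x)$ to converge to $\beta$. As written this is circular. Feeding the ansatz $\phi(x)=x+a\log x+b$ with constants $a,b$ into the integral gives \emph{exactly} $s^{a}/(1-s)$, which for $s=1-u$ expands as $u^{-1}-a+\tfrac{a(a-1)}{2}u+O(u^2)$; the target $u^{-1}-\beta$ has no $O(u)$ term, so the ansatz already fails at order $u$ for every $a\in(0,1)$. The correction needed to cancel that $O(u)$ residue is of the same size as the piece you are trying to read off at order $u^0$, and every sub-$\log x$ term in $\phi$ propagates into every order of the $u$-expansion through the nonlinear dependence $\int_0^\infty e^{\phi(sx)-\phi(x)}\,dx$. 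To ``match the constant'' you would need a priori uniform control of those corrections, which is essentially equivalent to what you want to prove. Absent a Tauberian regularity input there is no decoupling. The paper does something entirely different: it studies $\frac{d^2}{dt^2}\log f(x)$ with $t=\log x$, sets up a self-improving contraction on the ratio $m(a)$ of local sup and inf of $x^{-1}\frac{d^2}{dt^2}\log f$, repairs the sum-versus-integral error by Poisson summation (Proposition~\ref{prop-ghat}) and the $\tilde g_a,\tilde G_a$ replacement of Section~\ref{sec-looseends}, and iterates three rounds to reach $\frac{du}{dx}=1+O((\log x)^{13}/x^{3/2})$ (Corollary~\ref{cor-final}). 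That proves $u-x$ converges, hence $\omega(\beta)$ exists, but supplies no mechanism to evaluate $\omega(\beta)$; nothing in your outline supplies one either.
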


A weaker version of this conjecture is that $$\lim_{x\to \infty}\frac{\log f(x)-x}{\log x}=\beta.$$
The main result of this article is the following theorem, in the statement of which we write $f_\beta$ in place of $f$ to emphasize the dependence of $f$ on $\beta$.
\begin{theorem}\label{thm-main}
  $$\omega (\beta)= \lim_{x\to \infty}\frac{\log f_\beta(x)-x}{\log x}$$ exists for $\beta\in [0,1).$
\end{theorem}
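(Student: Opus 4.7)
The plan is to set $L(x) = \log f_\beta(x) - x$, so that the goal becomes the existence of $\lim_{x\to\infty} L(x)/\log x$. Rewriting the integral equation \eqref{e-1} as
$$\int_0^\infty e^{L(sx) - L(x) - (1-s)x}\, dx = \frac{1}{1-s} - \beta,$$
and substituting $u = (1-s)x$ with $\epsilon = 1-s$, yields the rescaled identity
$$\int_0^\infty e^{L((1-\epsilon)u/\epsilon) - L(u/\epsilon)}\, e^{-u}\, du = 1 - \beta\epsilon. \qquad (\star)$$
Under the heuristic ansatz $L(x) \sim \omega \log x$, equation $(\star)$ reduces to $(1-\epsilon)^\omega = 1 - \beta\epsilon + o(\epsilon)$, predicting $\omega = \beta$; however, the theorem requires only existence of the limit, which is genuinely weaker.

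First I would refine the estimates from \cite{sun-sun-2} to obtain two-sided a priori bounds $c_1 \log x \leq L(x) \leq c_2 \log x$ for all sufficiently large $x$. The upper bound should follow by testing the equation at small fixed $s$ and exploiting the integrability $\int_0^\infty f(x)^{-1}\, dx = 1 - \beta$ (the case $s = 0$); the lower bound by examining the leading singularity $1/(1-s)$ of the right-hand side as $s \to 1^-$. With these in place, $\omega^+ := \limsup_{x\to\infty} L(x)/\log x$ and $\omega^- := \liminf_{x\to\infty} L(x)/\log x$ are both finite, and the theorem reduces to showing $\omega^+ = \omega^-$. For this I would argue by contradiction: select subsequences $x_n, y_n \to \infty$ realizing $\omega^+$ and $\omega^-$, and use $(\star)$ at suitably chosen $\epsilon_n$ to derive inconsistent contributions to the integral. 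Equivalently, one may aim to show that for each fixed $\lambda \in (0, 1)$ the quantity $L(\lambda x) - L(x)$ has a limit of the form $\omega \log \lambda$ as $x \to \infty$, and then conclude by a Karamata-type regular-variation argument.

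The main obstacle is extracting scale-wise regularity of $L$ from the averaged identity $(\star)$: the integral samples values of $L$ over a continuum of scales with an exponential weight localized near $u = O(1)$, so ruling out slow logarithmic oscillation of $L(x)/\log x$ requires finer analysis than matching leading-order terms. I expect the key technical input will be a bootstrap that upgrades the a priori two-sided bounds into modulus-of-continuity estimates for $L(\lambda x) - L(x)$ on dyadic scales, leveraging the analyticity of $f_\beta$ together with the integral equation at multiple values of $s$ simultaneously. Once this regularity is available, the identity $(\star)$, with its fixed right-hand side $1 - \beta\epsilon$, should rigidify all subsequential limits to the same value, yielding the existence of $\omega(\beta)$.
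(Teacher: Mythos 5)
Your proposal sketches a genuinely different route from the paper, but it is a plan rather than a proof, and the crux step is missing. Where the paper works entirely on the coefficient side of the problem --- establishing, through three rounds of a delicate bootstrapping machine (Poisson summation, model-function replacement, iterated narrowing of $m(a)$), the sharp derivative asymptotic $\frac{d^2}{dt^2}\log f(x)=x+O\bigl((\log x)^{13}x^{-1/2}\bigr)$, and then integrating $\frac{du}{dx}=1+O\bigl((\log x)^{13}x^{-3/2}\bigr)$ to conclude that $u-x$ converges --- you propose to work directly from the rescaled integral identity $(\star)$, establish two-sided bounds $c_1\log x\le L(x)\le c_2\log x$, and then upgrade these to regular variation by a Karamata-type or subsequence argument.

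The gap is exactly where you flag it, and it is not merely technical. The identity $(\star)$ involves the increments $L((1-\epsilon)u/\epsilon)-L(u/\epsilon)$ integrated against $e^{-u}\,du$; your a priori sandwich $c_1\log x\le L(x)\le c_2\log x$ with $c_1<c_2$ allows these increments to be as large as $(c_2-c_1)\log(u/\epsilon)$ in absolute value, so the integrand can be of size $(u/\epsilon)^{c_2-c_1}e^{-u}$, which is not controlled as $\epsilon\to 0$. To make $(\star)$ bite, you need pointwise control of $L(\lambda y)-L(y)$ uniformly over $y$ in the effective support $[\delta/\epsilon,\,\delta^{-1}/\epsilon]$, which amounts to a derivative bound $L'(y)=O(1/y)$ with small error --- and that is precisely the content of the paper's Corollary 7.7, obtained only after the full three-round machinery. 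In other words, the ``modulus-of-continuity estimates on dyadic scales'' you say you expect to need are not a lemma to be tacked on; they \emph{are} the theorem, in lightly disguised form. There is also a telling inconsistency: if the regular-variation bootstrap could be carried out as outlined, the dominated-convergence step in your heuristic $(1-\epsilon)^\omega=1-\beta\epsilon+o(\epsilon)$ would rigorously yield $\omega(\beta)=\beta$, which is Conjecture 1.1 --- strictly stronger than what the paper proves (it only obtains existence, monotonicity, and $\omega\le 1$). The fact that your soft argument seems to prove more than the hard argument does is a signal that a step in it cannot be justified without the hard input. As it stands, the proposal does not establish the theorem.
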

The main idea of the proof is to study the asymptotics of $\frac{d^2}{dt^2}\log f(x),$ where $t=\log x$. We can modify the arguments in the proof of lemma 4.2 in \cite{sun-sun-2} to get the following.
\begin{theorem}\label{thm-second}
  $\omega (\beta)$ is an increasing function, and for each $\beta\in [0, 1)$ we have 
  $$\omega (\beta)\leq 1.$$
\end{theorem}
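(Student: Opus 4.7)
The plan is to extract both conclusions from the $s\to 1^-$ asymptotic analysis of the integral equation \eqref{e-1}, leveraging the existence of $\omega(\beta)$ from Theorem 1.1. The governing heuristic is that after the rescaling $s=1-\epsilon$ and $x=y/\epsilon$, equation \eqref{e-1} becomes
\[
\int_0^\infty \frac{f_\beta\bigl((1-\epsilon)y/\epsilon\bigr)}{f_\beta(y/\epsilon)}\,dy = 1-\beta\epsilon,
\]
and the pointwise asymptotic $\log f_\beta(x)=x+\omega(\beta)\log x + o(\log x)$ makes the integrand tend to $(1-\epsilon)^{\omega(\beta)} e^{-y}$, so the identity morally reads $(1-\epsilon)^{\omega(\beta)}\approx 1-\beta\epsilon$, i.e.\ $\omega(\beta)=\beta$ at first order in $\epsilon$. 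My task is to extract the partial statements ``$\omega$ increasing'' and ``$\omega\le 1$'' from a rigorous --- but weaker --- form of this heuristic.

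For monotonicity, I would fix $\beta_1<\beta_2$ and subtract the two rescaled integral equations to obtain
\[
\int_0^\infty\left(\frac{f_{\beta_1}((1-\epsilon)y/\epsilon)}{f_{\beta_1}(y/\epsilon)}-\frac{f_{\beta_2}((1-\epsilon)y/\epsilon)}{f_{\beta_2}(y/\epsilon)}\right)dy = \epsilon(\beta_2-\beta_1).
\]
Using the asymptotics from Theorem 1.1, the left-hand side to leading order in $\epsilon$ equals $\epsilon\bigl(\omega(\beta_2)-\omega(\beta_1)\bigr)+o(\epsilon)$, whence $\omega(\beta_2)\ge\omega(\beta_1)$.

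For the upper bound $\omega(\beta)\le 1$, I would argue by contradiction. Assume $\omega(\beta)>1$, pick $\omega_0\in(1,\omega(\beta))$, so that by Theorem 1.1, for all sufficiently large $x$, $\log f_\beta(x)\ge x+\omega_0\log x - M$ for some constant $M$. Inserting this lower bound into $f_\beta(y/\epsilon)$ in the denominator and a matching upper bound into $f_\beta((1-\epsilon)y/\epsilon)$ in the numerator of the rescaled equation, then passing to the limit $\epsilon\to 0$ via dominated convergence, produces an inequality of the form $\beta\ge\omega_0>1$, contradicting $\beta\in[0,1)$.

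The main obstacle is the rigorous interchange of the $\epsilon\to 0$ limit with the integral. It requires strengthening the mere existence of $\omega(\beta)$ in Theorem 1.1 to quantitative uniform control of $\log f_\beta(x)-x-\omega(\beta)\log x$ at the scale $o(\log x)$, together with two-sided growth bounds on $f_\beta$ enabling a dominated convergence argument. Producing such uniform estimates is precisely where the \emph{refinement} of the techniques of Lemma~4.2 in \cite{sun-sun-2}, alluded to in the theorem statement, becomes essential.
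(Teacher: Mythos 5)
Your proposal is a genuinely different route from the paper's, and as written it contains a gap at exactly the point you flag as the ``main obstacle.''

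The paper does \emph{not} analyze the rescaled integral equation at all. Its proof of Theorem~\ref{thm-second} is a comparison of coefficient functions: for $\beta_1<\beta_2$ it normalizes $f_{\beta_1}(0)=f_{\beta_2}(0)=1$, sets $k(i)=c(i,\beta_2)/c(i,\beta_1)$, and studies $Q=\{k(i):i\ge 1\}$. Two convexity-type inequalities from section~4 of \cite{sun-sun-2} (formulas \eqref{e-cb1}--\eqref{e-cb2}) show that $\sup Q>1$ cannot be attained at finite $i$ while $\inf Q$, attained at some $i_0$, must satisfy $k_{i_0}>1$. Corollary~\ref{cor-final} then yields $\frac{d}{di}\log k(i)=\frac{\omega(\beta_2)-\omega(\beta_1)}{i}+O(\cdot)$ (Lemma~\ref{lem-last}), so $\log k(i)>0$ for all $i$ together with a limit of $k(i)$ forces $\omega(\beta_2)\ge\omega(\beta_1)$. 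The upper bound comes from replacing $(f_{\beta_1},f_{\beta_2})$ by $(f_{\beta},xe^x)$, the boundary case $\omega=1$, and running the same $\sup Q/\inf Q$ argument. This is a discrete, coefficient-side argument; the refinement of Lemma~4.2 of \cite{sun-sun-2} that the introduction refers to is precisely this machinery, not a dominated-convergence bound for the integral equation.

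The gap in your plan: you assert that the left side of the subtracted rescaled equation equals $\epsilon(\omega(\beta_2)-\omega(\beta_1))+o(\epsilon)$, but you do not establish the uniform error control needed to pass that first-order term through the integral, and you explicitly defer it. That deferral is not a technicality --- it is the entire content. Moreover, if that step were available, the identity $\int_0^\infty f_\beta((1-\epsilon)y/\epsilon)/f_\beta(y/\epsilon)\,dy=1-\beta\epsilon$ combined with your claimed expansion would give $\omega(\beta_2)-\omega(\beta_1)=\beta_2-\beta_1$ exactly (not merely ``$\ge$''), i.e., $\omega(\beta)\equiv\beta$. The paper explicitly states that it cannot at present show $\omega$ is continuous or strictly increasing, which tells you the authors do not believe the dominated-convergence step can currently be pushed through to order $o(\epsilon)$ from the estimates they have. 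So your write-up, as it stands, proves neither conclusion: what it would prove, if the unjustified interchange held, is strictly more than Theorem~\ref{thm-second}, and what you actually justify is only the zeroth-order statement $\int\to 1$, which gives nothing. You need either to carry out the delicate error analysis near $y\asymp\epsilon$ and show every contribution is $o(\epsilon)$, or else follow the paper's coefficient-comparison route, which sidesteps the rescaled integral entirely.
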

Unfortunately, we are currently not able to show if $\omega(\beta)$ is continuous or strictly increasing. We hope to be able to do this in our future work.

The proof of theorem \ref{thm-main} also implies, as we will explain after the proof of theorem \ref{thm-main}, that 
$$C_\beta=\lim_{x\to\infty }\frac{f(x)}{x^{\omega(\beta)}e^x}  \textit{ exists}.$$
It seems that we need some new ideas to calculate the value of $C_\beta$.

\

We now talk about the proof of our main result. The main strategy in proving theorem \ref{thm-main} is to prove an asymptotics of $\frac{d^2}{dt^2}\log f(x)$ with very small error (corollary \ref{cor-final}). Recall that in \cite{sun-sun-2} we developed a set of tools and showed that $\lim_{x\to \infty}\frac{1}{x}\frac{d^2}{dt^2}\log f(x)=1$. The machinery there was to narrow down three things simultaneously. We are able to sharpen this set of tools to prove a much finer asymptotic of $\frac{d^2}{dt^2}\log f(x)$. In doing so, there are three major difficulties.

 The first one is the error in estimating summations with integrals. The naive estimate of this error is too large for theorem \ref{thm-main}. Our idea is to use the Poisson summation formula to estimate this error. But in order to use Poisson summation formula, we need good estimates of the derivatives of $\log f(x)$. So we have to use the naive estimate first and run our machine to get estimates of the derivatives of $\log f(x)$ which happen to be good enough for using the Poisson summation formula. 
 
 The second major difficulty is the internal error in the machinery used in \cite{sun-sun-2}. More precisely, for large $a$, we use the integrals on the interval $I_x=[x-\sqrt{x}\log x,x+\sqrt{x}\log x ]$ to estimate the integrals over $[0,\infty)$ or $(-\infty,\infty)$. But if we have bounds like $$M_1<\frac{1}{x}\frac{d^2}{dt^2}\log f(x)<M_2,$$
 then on the interval $I_0$, we would have $$M_1x_0(1-\frac{\log x_0}{\sqrt{x_0}})<\frac{d^2}{dt^2}\log f(x)<M_2x_0(1+\frac{\log x_0}{\sqrt{x_0}}).$$
The ratio of the upper and lower bounds will carry an extra $O(\frac{\log x_0}{\sqrt{x_0}})$ term, making a barrier to getting asymptotics finer than $\frac{\log x}{\sqrt{x}}$. To overcome this difficulty, we developed a method in section \ref{sec-looseends} to replace the original functions with new ones.

The third major difficulty or complication is the mutual dependence of the estimates of different quantities. We (only)have to run the machine three times, each time producing errors small enough for the next round.

\

The structure of this article is as follows. We first recall the results from \cite{sun-sun-2} necessary for this article. Then in section \ref{sec-3} we run the machine for the first time. Then in section \ref{sec-looseends}, we deal with the two loose ends whose contribution of errors is too large. Then in section \ref{sec-5}, we run the machine again and use the new smaller error to take care of one loose end again. Then in section \ref{sec-6}, we use the new smaller error to reestimate all the relative terms and get prepared for the third round. In the last section, we run the machine for the third time and get an estimate fine enough for theorem \ref{thm-main}. At the end of the last section, we will prove theorem \ref{thm-second}.

\

\textbf{Acknowledgements.} The author would like to thank Professor Song Sun for many helpful discussions.

\section{Setting-up}
\subsection*{Notations}
\begin{itemize}
  \item We will denote by $\epsilon(a)$ a term that is asymptotically smaller than $a^{-n}$ for any $n>0$.
  \item Denote $t=\log x$ and $$u=\frac{d}{dt}\log f(x)=x\frac{f'(x)}{f(x)}.$$
  \item We extend the coeffients $c_i$ to a coeffient function of real $a> 0$: \begin{equation*}c(a)=(\int_0^\infty \frac{x^a}{f(x)}dx)^{-1} \label{e:definition of ca}	
	\end{equation*}
  \item We define $\lambda(a)=-\log c(a)$. When $a>0$ is an integer, we will write $\lambda_a=\lambda(a)$
  \item We normalize $f(0)=1$.
  \item For each $a>0$, we denote by $x_a$ the point so that $u(x_a)=a$. We also denote by $t_a=\log x_a$.
  \item For each $x>0$, we denote by $n_x$ the point so that $\lambda'(n_x)=x$.
  \item We denote by $\tilde{a}=n_{x_a}$.
  \item For $a\geq1$ we denote $$h_a(x)=\frac{f(x)}{c(a)x^a}.$$
  \item We define $\nu(a)=\frac{h_a(x_a)}{\sqrt{x_a}}$.
  \item We denote by $\tilde{t}_a=\lambda''(a)$.
  \item We denote by $\Delta_a=\frac{c(\tilde{a})}{c(a)}x_a^{n_{x_a}-a}$.
  \item We denote by $g_a(t)=\log f(x)-(n+1)t-(\log f(x_{a+1})-(n+1)t_{a+1})$ so that we have $$g_a(t_{a+1})=g_a'(t_{a+1})=0.$$
  \item We denote by $G_a(i)=\lambda(i)-\lambda(\tilde{a})+(\tilde{a}-i) \log x_a$ so that we have $$G_a(\tilde{a})=G'_a(\tilde{a})=0.$$
\end{itemize}
\begin{rmk}\label{rmk-1}
  \begin{itemize}
    \item Notice that for a integer $i>0$, we have $c_i=c(i)$, but $c_0\neq c(0)$. 
    \item  We have that $g_a(t)$ is a strictly convex function of $t$ whose unique minimum is attained at $t_{a+1}=\log x_{a+1}$. 
    \item   We also have that $G_a(i)$ is a strictly convex function of $i$ whose unique minimum is attained at $\tilde{a}$.
    \item Theorem \ref{thm-start} then implies that the mass of $e^{-g_a(t)}$ and $e^{-G_a(i)}$ are concentrated around the points $t_{a+1}$ and ${\tilde{a}}$ respectively. More precisely, we have $$\int_{|t-t_{a+1}|>\frac{\log a}{\sqrt{a}}}(t-t_{a+1})^ke^{-g_a(t)}dt=\epsilon(a),$$ and $$\sum_{|i-\tilde{a}|>\sqrt{a}\log a}(i-\tilde{a})^ke^{-G_a(i)}=\epsilon(a),$$ for all $k\geq 0$.
  \end{itemize}
\end{rmk}

\subsection{preparation}
In the following, we recall the results from \cite{sun-sun-2} necessary for this article.
We first recall the following theorem.
\begin{theorem}[theorem 3.19 in \cite{sun-sun-2}]\label{thm-start}
  We have the following:
		\begin{equation}
		\lim_{a\to \infty}\frac{h_a(x_a)}{\sqrt{a}}=\sqrt{2\pi},
		\end{equation}
		\begin{equation}
		\lim_{a\to \infty}a\frac{d^2}{da^2}\lambda(a)=1,
		\end{equation}
		and
		\begin{equation}
		\lim_{x\to \infty} \frac{\frac{d^2}{dt^2}\log f(x)}{x}=1
		\end{equation}
\end{theorem}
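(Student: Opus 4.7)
The plan is to prove the three limits simultaneously from a coupled Laplace/saddle-point analysis, exploiting the Legendre-type duality between the integral representation $c(a)^{-1}=\int_0^\infty x^a/f(x)\,dx$ and the series representation $f(x)=\sum_{i\ge 0}c_ix^i$. All three statements encode the same underlying Gaussian concentration viewed from two complementary sides.

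First I would set $\phi(t)=\log f(e^t)$ with $t=\log x$, so that $\phi'(t)=u(x)$ and $\phi''(t)$ is precisely the quantity in (3). Observe that $\phi$ is the cumulant generating function of the probability measure $P_x(i)=c_ix^i/f(x)$ on $\mathbb N_{\ge 0}$, hence $\phi''(t)=\mathrm{Var}_{P_x}(i)$. Applying Laplace's method in $t$ to $c(a)^{-1}=\int e^{-\Phi_a(t)}\,dt$ with $\Phi_a(t)=\phi(t)-(a+1)t$, the unique critical point sits at $t=t_{a+1}$ and the Gaussian approximation gives
\[
c(a)^{-1}\sim \frac{x_{a+1}^{a+1}}{f(x_{a+1})}\sqrt{2\pi/\phi''(t_{a+1})}.
\]
Unwinding the definition of $h_a$ this rearranges to $h_a(x_a)\sim x_a\sqrt{2\pi/\phi''(t_a)}$, so claim (1) follows at once from (3) together with $x_a\sim a$.

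Dually, I would apply saddle-point analysis to the series $f(x_a)=\sum_i e^{-\lambda(i)+i\log x_a}$ by comparing it to $\int e^{-G_a(y)}\,dy$ with $G_a(y)=\lambda(y)-y\log x_a$. The saddle $\tilde a$ satisfies $\lambda'(\tilde a)=\log x_a$, and the Gaussian approximation gives $f(x_a)\sim c(\tilde a)x_a^{\tilde a}\sqrt{2\pi/\lambda''(\tilde a)}$. Via the variance interpretation, the Gaussian width $1/\lambda''(\tilde a)$ must match $\phi''(t_a)=\mathrm{Var}_{P_{x_a}}(i)$, yielding the duality $\phi''(t_a)\,\lambda''(\tilde a)\sim 1$. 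Differentiating the first Laplace formula twice in $a$ produces $\lambda'(a)\sim t_a$ and $\lambda''(a)\sim dt_a/da = 1/\phi''(t_a)$, so a consistency argument using $\phi'(t)\sim e^t$ at leading order (forced by the balanced equation \eqref{e-1}) should pin down $x_a\sim a$, $\phi''(t_a)\sim a\sim x_a$, and $a\lambda''(a)\sim 1$, giving (2), (3), and via the first step (1).

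\textbf{The main obstacle} is the circularity inherent in justifying the Gaussian approximations, since each step requires a priori control on the very quantities $\phi''(t_a)$ and $\lambda''(a)$ that we are trying to estimate. The plan is to bootstrap from crude convexity estimates: both $\Phi_a$ in $t$ and $G_a$ in $y$ are strictly convex (the former because $\phi$ is a cumulant generating function, the latter because $\lambda$ is a Legendre-type transform of a positive measure), giving qualitative concentration at the saddles for free. From this one extracts rough polynomial bounds on $\phi''(t_a)$ and $1/\lambda''(a)$ and iterates to sharpen the rates. The real technical burden is controlling tails of the integral (for $x$ near $0$ or $\infty$) and of the series (for $i$ far from $\tilde a$) uniformly in $a$, so that the remainder terms in the saddle-point formulas are small enough to close the bootstrap; I expect this to be the delicate step and the reason why the current paper needs a substantial refinement of these tools.
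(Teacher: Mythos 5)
This theorem is not proved in the present paper at all: it is imported verbatim from the author's earlier work (theorem~3.19 in \cite{sun-sun-2}), and the paper only recalls it as the starting point for the refinements in section~\ref{sec-3}. So there is no ``paper's own proof'' here to compare against directly; what one can compare against is the description of the machinery (``narrow down three things simultaneously'') and the refined version of that same machine that section~\ref{sec-3} runs. Your high-level plan does match that machinery: the two complementary Laplace/saddle-point pictures, the identification $\frac{d^2}{dt^2}\log f(x)=\mathrm{Var}_{P_x}(i)$, the Legendre-type duality between $\lambda''$ and $\phi''$, and the recognition that all three limits are facets of one Gaussian concentration statement are exactly the framework in use.

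The genuine gap is in the sentence ``a consistency argument\ldots should pin down'' and in ``bootstrap from crude convexity estimates\ldots and iterate to sharpen the rates.'' That is precisely the hard part, and as written it is not an argument. Rough convexity of $\Phi_a$ and $G_a$ gives you concentration at the saddle \emph{given} two-sided control of the second derivatives, but it does not by itself force those second derivatives to converge to the claimed constants; a priori, the ratio $m(a)=M_2/M_1$ of the sup and inf of $x\lambda''$ (and of $\frac{1}{x}\frac{d^2}{dt^2}\log f$) over a tail could stabilize strictly above $1$. What closes the loop in \cite{sun-sun-2} (and in the refined rerun in section~\ref{sec-3}) is a specific quantitative mechanism: the admissible-function functional $\tilde d(g)$ is sandwiched between $\tilde q(m)$ and $\tilde p(m)$ with $\tilde q,\tilde p\to\frac{1}{2\pi}$ as $m\to 1$ and with explicitly controlled derivatives; at local extrema of $\nu(a)=h_a(x_a)/\sqrt{x_a}$ one gets inequalities of the type $\nu(b)^4 p_1^2\geq 1$ and $\nu(b')^4 q_1^2\leq 1$; feeding these back into the bounds on $x\lambda''$ and $\frac{1}{x}\frac{d^2}{dt^2}\log f$ produces $m(a')-1<0.95\,(m(a)-1)$ along a sequence $a_n'$ that only doubles $a$, which is a genuine contraction and forces $m(a)\to 1$. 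Without some such contraction lemma your bootstrap does not converge, and the assertion ``$\phi'(t)\sim e^t$ at leading order, forced by \eqref{e-1}'' is itself something that must come out of the argument rather than go into it. So the plan identifies the right framework and correctly flags the obstruction, but it does not supply the decisive step; it is a roadmap, not a proof.
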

This theorem allows us to define the following functions for $b>1$.
Let $$M_1'(b)=\inf\{\frac{\frac{d^2}{dt^2}\log f(x)}{x}|x>x_b-\sqrt{b}\log b \} ,$$
$$M_2'(b)=\sup \{\frac{\frac{d^2}{dt^2}\log f(x)}{x}|x>x_b-\sqrt{b}\log b \},$$
$$M_1''(b)=\inf\{x_a\lambda''(a)|a>b-\sqrt{b}\log b \}, $$and 
$$M_2''(b)=\sup \{x_a\lambda''(a)|a>b-\sqrt{b}\log b \} .$$

\begin{defin}
  For a pair of numbers $0<M_1<M_2<\infty $, we call a $C^{2}$ function $g(t)$ on $\R$ be a $(M_1,M_2)$-admissible function if it  satisfies the following:
		\begin{itemize}
			\item[(1)]$g(0)=0$;
			\item[(2)]$g'(0)=0$;
			\item[(3)] there exist  such that $M_1<g''(t)<M_2$ for all  $t\in \R $.
		\end{itemize}
\end{defin}

    We define the following functions of $g$: 
    $$\bar{t}(g)=\frac{\int_{-\infty}^{\infty }te^{-g(t)}dt}{\int_{-\infty}^{\infty }e^{-g(t)}dt},$$
    $$A(g)=\int_{-\infty}^{\infty }e^{-g(t)}dt,$$
    $$B(g)=\int_{-\infty}^{\infty }(t-\bar{t})^2e^{-g(t)}dt,$$
    and 
    $$\tilde{d}(g)=\frac{B(g)}{(A(g))^3}. $$
    \begin{rmk}
      Notice that $\tilde{d}(g)$ does not change if we dilate $t$, namely $\tilde{d}(g(t))=\tilde{d}(g(\lambda t))$. 
    \end{rmk}

    Now for fixed $m>0$, we defined the function 
    $$d_m(c)=\frac{\int_0^\infty y^2e^{-l_c(y)}dy}{(\int_0^\infty e^{-l_c(y)}dy)^3},$$
    where $l_c(y)$ satisfies:
    \begin{itemize}
      \item $l_c(y)=\frac{y^2}{2}$ for $y\leq c$;
      \item $l_c(y)=\frac{my^2}{2}+(1-m)cy+\frac{mc^2}{2}-\frac{c^2}{2}$ for $y>c$.
    \end{itemize}
When $m<1$, $d_m(c)$ attains its maximum for some $c>0$, and when $m>1$, $d_m(c)$ attains its minimum for some $c>0$.
For $m\geq 1$, we denote $$p(m)=\frac{1}{4}\max_{c\geq 0}d_{1/m}(c),$$ and $$q(m)=\frac{1}{4}\min_{c\geq 0}d_{m}(c).$$So we can define $$F(m)=(\frac{p(m)}{q(m)})^2.$$ 
Then we define $$\tilde{p}(m)=4p(m)\frac{\alpha^3+1}{(1+\alpha)^3},$$ where $\alpha=\frac{1}{\sqrt{m}}$, and let $$\tilde{q}(m)=q(m)-\frac{4}{\pi^2}\frac{(1-\alpha)^2}{(1+\alpha)^2} .$$ 
We then have the following theorem.
\begin{theorem}[corollaries 3.15 and 3.16 combined in \cite{sun-sun-2}]
  Let $g(t)$ be a $(M_1,M_2)$-admissible function. Let $m=\frac{M_2}{M_1}$. Then we have
  $$\tilde{q}(m)\leq \tilde{d}(g)\leq \tilde{p}(m) $$
\end{theorem}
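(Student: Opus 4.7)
The plan is to reduce the two-sided bound $\tilde{q}(m)\le \tilde{d}(g)\le \tilde{p}(m)$ to a one-sided variational problem whose extremizers are precisely the piecewise-quadratic functions $l_c$ entering the definitions of $p(m)$ and $q(m)$.

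First, using the dilation invariance $\tilde{d}(g(t))=\tilde{d}(g(\lambda t))$, I would rescale so that $M_1=1$ and $M_2=m$. Because $g(0)=g'(0)=0$ and $1<g''(t)<m$, the function $g$ is strictly convex with unique minimum at $0$, and on each half-line it is sandwiched between the pure quadratics $t^2/2$ and $mt^2/2$. I would then treat the two half-lines $[0,\infty)$ and $(-\infty,0]$ separately, using the fact that
$$A(g)=\int_0^{\infty}e^{-g}dt+\int_{-\infty}^{0}e^{-g}dt,\qquad B(g)=\int_{\R}t^2e^{-g}dt-A(g)\,\bar{t}(g)^2,$$
so that the two-sided quantity $\tilde{d}(g)$ decomposes into one-sided integrals plus a mean-shift correction.

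The core step is to show that for any $C^2$ convex function $g$ on $[0,\infty)$ with $g(0)=g'(0)=0$ and $g''\in[1,m]$ (resp.\ $[1/m,1]$), the one-sided ratio $\int_0^\infty t^2e^{-g}\,dt/(\int_0^\infty e^{-g}\,dt)^3$ is bounded above by $4p(m)$ (resp.\ below by $4q(m)$), and the bound is attained by some $l_c$. I would establish this by a bang-bang argument: a first variation in $g''$ shows that an extremizer must have $g''$ taking only the two extreme values, and a further variation shows that the switch between these values occurs at a single point $c>0$. This reduces the one-sided optimization to the one-parameter family $\{l_c\}_{c\ge 0}$, whose supremum and infimum are $4p(m)$ and $4q(m)$ by definition.

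Finally, I would combine the two half-line bounds into a bound on $\tilde{d}(g)$, accounting for the mean subtraction in $B(g)$. The two half-line ``widths'' in the worst case differ by a factor $\alpha=1/\sqrt{m}$, and matching the two one-sided extremal profiles to the same denominator $A(g)^3=(A_++A_-)^3$ produces exactly the combinatorial factor $(\alpha^3+1)/(1+\alpha)^3$ appearing in $\tilde{p}(m)$. For the lower bound, the penalty for the possible asymmetry of the two halves, estimated via a Cauchy--Schwarz bound on the displacement $\bar{t}(g)$, contributes the explicit subtraction $\frac{4}{\pi^2}\frac{(1-\alpha)^2}{(1+\alpha)^2}$ in $\tilde{q}(m)$.

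The main obstacle is the rigorous one-sided reduction to the extremal profiles $l_c$: one needs to verify that the supremum/infimum over all admissible $g$ on $[0,\infty)$ is indeed attained on piecewise quadratics with a \emph{single} jump of $g''$, which requires ruling out multi-switch competitors via a careful convexity/variational argument. Once the one-sided extremal analysis is in place, the passage to the two-sided constants $\tilde{p}(m),\tilde{q}(m)$ is routine (though delicate) bookkeeping matching the two normalizations across the origin.
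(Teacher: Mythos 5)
The paper does not give a proof of this theorem; it is imported verbatim from the earlier paper \cite{sun-sun-2} (``corollaries 3.15 and 3.16''), so there is no in-paper argument to compare against. Assessed on its own merits, your plan is at least consistent with the shape of the constants: the factor $(\alpha^3+1)/(1+\alpha)^3$ in $\tilde p(m)$ does look like a stitching of two unequal-width half-line contributions, and $\frac{4}{\pi^2}\frac{(1-\alpha)^2}{(1+\alpha)^2}$ is exactly the value of $\bar t(g)^2/A(g)^2$ when one half of $e^{-g}$ is $e^{-t^2/2}$ and the other $e^{-mt^2/2}$. But what you have written is a plan, not a proof. The step you yourself flag as the main obstacle, namely that the one-sided ratio $\int_0^\infty t^2 e^{-g}\,dt/\bigl(\int_0^\infty e^{-g}\,dt\bigr)^3$ is extremized over all admissible $g$ on $[0,\infty)$ by a single-switch profile $l_c$, is simply asserted. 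It is not a routine bang-bang reduction: the functional is neither affine nor convex in $g''$, and once the variation is rewritten in terms of $g''$ the Euler--Lagrange density is a convolution of $e^{-g}(t^2A-3B_2)$ against the kernel $(t-t_0)_+$, so the sign pattern needed to rule out multiple switches or interior critical arcs has to be established rather than presupposed.

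Two further points you call routine bookkeeping are also where much of the content actually lives. First, $p(m)$ and $q(m)$ are built from \emph{different} one-parameter families, $d_{1/m}$ for the max and $d_m$ for the min, i.e.\ one extremal has $g''$ dropping from the top of the allowed range while the other rises from the bottom; your sketch treats both as ``the same $\{l_c\}$ family,'' which is only true after an extra dilation that must then be tracked when the two half-lines are rejoined. Second, the passage from one-sided bounds $B_\pm \le C A_\pm^3$ to a bound on $(B_++B_-)/(A_++A_-)^3$ requires identifying the admissible range of $A_-/A_+$ (governed by $m$) and extremizing $(1+\alpha^3)/(1+\alpha)^3$ over it, and the lower bound needs a \emph{uniform} bound on $\bar t^2/A^2$, not merely its value at the symmetric competitor. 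These gaps are plausibly fillable, and the numerical form of $\tilde p,\tilde q$ suggests the authors of \cite{sun-sun-2} did something along these lines, but as it stands the proposal does not constitute a proof.
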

In order to show the convergence of our later iterations, we need the following results from subsection 3.3 in \cite{sun-sun-2}:
\begin{itemize}
  \item $\lim_{m\to 1}q(m)=\lim_{m\to 1}p(m)=\frac{1}{2\pi},$
  \item When $1\leq m <1.01$, we have $\frac{p(m)}{q(m)}<1.005$, $0.031<p'(m)<0.033$, $-0.033<q'(m)<-0.031, $ and $0<F'(m)<0.84$;
  \item When $1\leq m <1.01$, $1\leq \Delta_a<e^{\frac{1.1}{\pi}(1-\frac{1}{\sqrt{m}})^2}.$
  \item Let $P(m)=4\frac{\alpha^3+1}{(1+\alpha)^3}$, then for $1\leq m <1.01$, we have $1\leq P(m)<1.00002$ and $0\leq P'(m)<0.0037.$
\end{itemize}
Therefore, we have 
$-0.033<\tilde{q}'(m)<-0.03$, $0.031<\tilde{p}'(m)<0.034,$ 
and $$0\leq \frac{d}{dm}(\frac{\tilde{p}^2(m)}{\tilde{q}^2(m)})<0.89,$$
which imply 
$$0.031(m-1)<\frac{1}{2\pi}-\tilde{q}(m)<0.03(m-1) ,$$ and $$0.031(m-1)<\tilde{p}(m)-\frac{1}{2\pi}<0.034(m-1),$$
and 
for $1\leq m <1.01$.

It is not hard to get the following formula for $\lambda$,
\begin{equation}\label{e-lam2}
  \lambda''(a)=\int_0^\infty\frac{(\log x-\tilde t_a)^2dx}{h_a(x)}
\end{equation}
For $\log f(x)$, we have 
\begin{equation}\label{e-f2}
  \frac{d^2}{dt^2}\log f(x_a)=\frac{d^2}{dt^2}\log h_a(x_a)=\sum(i-a)^2\tau_{x_a}(i),
\end{equation}
	where \begin{eqnarray*}
		\tau_{x_a}(i)=\frac{1}{h_a(x_a)}\frac{c_i}{c_a}x_a^{i-a}=\frac{\Delta_a}{h_a(x_a)}e^{-G_a(i)}.
	\end{eqnarray*}
 We also have \begin{equation}\label{e-f3}
  \frac{d^3}{dt^3}\log f(x_a)=\frac{d^3}{dt^3}\log h_a(x_a)=\sum(i-a)^3\tau_{x_a}(i).
\end{equation}

\section{First level narrowing}\label{sec-3}
Since $a=\frac{d}{dt}\log f(x_a)$, $\frac{d}{dt_a}a=\frac{d^2}{dt^2}\log f(x_a)$.  Then $$\frac{d}{da}\log \nu(a)=\tilde{t}_a-t_a-\frac{1}{2\frac{d^2}{dt^2}\log f(x_a)},$$ and $$\frac{d^2}{da^a}\log \nu(a)=\lambda''(a)-\frac{1}{\frac{d^2}{dt^2}\log f(x_a)}+\frac{\frac{d^3}{dt^3}\log f(x_a)}{2(\frac{d^2}{dt^2}\log f(x_a))^3}.$$

Since $\lim_{a\to \infty}a\lambda''(a)=1$, we have
$$\frac{d^3}{dt^3}\log f(x)=O(x^{3/2}).$$

 Then let $M_1(b)=\min(M_1'(b),M_1''(b))$, $M_2(b)=\min(M_2'(b),M_2''(b))$.
On the interval $P_b=[x_b-\sqrt{b}\log b,x_b+\sqrt{b}\log b]$, we have $$M_1(x_b-\sqrt{b}\log b)\leq \frac{d^2}{dt^2}\log f(x)\leq M_2(x_b+\sqrt{b}\log b)$$
 Similarly, on the interval $Q_b=[b-\sqrt{b}\log b,b+\sqrt{b}\log b]$, 
 $$\frac{M_1}{x_b+\sqrt{b}\log b}\leq \lambda''(a)\leq \frac{M_2}{x_b-\sqrt{b}\log b}.$$
 So we let $$m(b)=(1+3\frac{\log b}{\sqrt{b}})\frac{M_2(b)}{M_1(b)}.$$ 
 \begin{rmk}
  Theorem \ref{thm-start} implies that $\tilde{a}-a=o(\sqrt{a})$, so the mass concentration property of $e^{-G_a(i)}$ mentioned in remark \ref{rmk-1}
  is still valid for the interval $Q_b$, which is more convenient.
 \end{rmk}

 Then Let $p_1=\tilde{p}(m(b))$ and $q_1=\tilde{q}(m(b))$.
 Then we have
 \begin{equation}\label{e-l-general}
  q_1(1-O(\frac{1}{b}))\frac{h^2_b(x_b)}{x_b^2}\leq \lambda''(b)\leq p_1(1+O(\frac{1}{b}))\frac{h^2_b(x_b)}{x_b^2},
 \end{equation}

 and
 \begin{equation}\label{e-f-general}
  \frac{1}{\Delta_b^2}q_1(1-O(\frac{1}{\sqrt{b}}))h^2_b(x_b)\leq \frac{d^2}{dt^2}\log f(x_b)\leq \frac{1}{\Delta_b^2}p_1(1+O(\frac{1}{\sqrt{b}}))h^2_b(x_b).
 \end{equation}
\subsection{non-monotonic case}
In this subsection, we assume that $\nexists n>0$ such that $\nu(a)$ is monotone for $a>n$. This assumption applies to all the results in this subsection.

 Let $b$ be a local minimum point of $\nu(a)$. By assumption, there are infinite such $b$, so our following notations $O(\frac{1}{\sqrt{b}})$, etc. make sense. Then we have 
 $$\tilde{t}_b-t_b=\frac{1}{2\frac{d^2}{dt^2}\log f(x_b)},$$
 and 
$$\lambda''(b)-\frac{1}{\frac{d^2}{dt^2}\log f(x_b)}+\frac{\frac{d^3}{dt^3}\log f(x_b)}{2(\frac{d^2}{dt^2}\log f(x_b))^3}\geq 0.$$
So $$\lambda''(a)\frac{d^2}{dt^2}\log f(x_b)\geq 1-O(\frac{1}{\sqrt{x_b}}).$$
Since $t_{a+1}-t_a=\frac{1+o(1)}{x_a}$, we have $x_{b+1}=x_b+1+o(1)$ and $$\tilde{t}_b-t_{b+1}=-\frac{1+o(1)}{2x_b}.$$
So $\frac{d}{di}g_a(i)=\frac{1}{2\frac{d^2}{dt^2}\log f(x_a)}$ at $i=a=b$, implying that $b-n_{x_b}=\frac{1}{2}+o(1)$. So $\Delta_b=e^{O(\frac{1}{a})}=1+O(\frac{1}{a})$. 
Therefore
$$h_b(x_b)=(1+O(\frac{1}{\sqrt{a}}))\int_{0}^{\infty }e^{-G_a(i)}di.$$
Since $h_b'(x_b)=0$, $h_b(x_{a+1})=(1+O(\frac{1}{x_b}))h_b(x_{b})$, therefore
$$\frac{h_b(x_b)}{x_b}=(1+O(\frac{1}{a}))\int_{-\infty}^{\infty }e^{-g_a (t)}dt.$$

 Then
 $$\frac{d^2}{dt^2}\log f(x_b)\leq p_1(1+O(\frac{1}{\sqrt{b}}))h^2_b(x_b),$$ and
 $$\lambda''(b)\leq p_1(1+O(\frac{1}{b}))\frac{h^2_b(x_b)}{x_b^2}.$$

 So we get that 
$(1+O(\frac{1}{\sqrt{b}}))(\nu(b))^4p_1^2\geq 1$. So $$\nu(b)>(1-\frac{C_1}{\sqrt{b}})\frac{1}{\sqrt{p_1}}$$ for $b$ large enough, for some $C_1>0$ independent of $b$.
And similarly, let $b'>b$ be a local maximum, then we have $$\nu(b')<(1+\frac{C_2}{\sqrt{b}})\frac{1}{\sqrt{q_1}},$$for some $C_2>0$ independent of $b$. 
Then for any $a>b$, $\exists b',b''$ such that 
\begin{itemize}
  \item $b\leq b''\leq a\leq b'$;
  \item $b''$ is a local minimum point and $b'$ is a local maximum point;
  \item $\nu(b'')\leq \nu(a)\leq \nu(b')$.
\end{itemize}
So we have
$$(1-\frac{C_1}{\sqrt{b''}})\frac{1}{\sqrt{p_1}}< \nu(a)< (1+\frac{C_2}{\sqrt{b'}})\frac{1}{\sqrt{q_1}}.$$
Therefore, we have
$$(1-\frac{C_1}{\sqrt{b}})\frac{1}{\sqrt{p_1}}< \nu(a)< (1+\frac{C_2}{\sqrt{b}})\frac{1}{\sqrt{q_1}},$$
for all $a\geq b$.

Let $C_3=\max\{C_1,C_2\}$. 
\begin{lem}\label{lem-p1q1}
  For $a$ large enough, assume $m(a)>1+30\frac{\log a}{\sqrt{a}}$, then
  $$(1-\frac{C_3}{\sqrt{a}})\frac{1}{\sqrt{p_1}}< \nu(a')< (1+\frac{C_3}{\sqrt{a}})\frac{1}{\sqrt{q_1}},$$
  where $p_1=p(m(a))$ and $q_1=q(m(a))$,
  for all $a'\geq a+2\sqrt{a}\log a$.
\end{lem}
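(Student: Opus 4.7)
The strategy is to reduce Lemma \ref{lem-p1q1} to the local-minimum bounds already derived in this subsection. The hypothesis $m(a) > 1 + 30\log a/\sqrt{a}$ is designed to guarantee enough built-in oscillation of $\nu$ to place a local minimum close to $a$, after which the earlier propagation bounds supply the claim for all $a' \geq a + 2\sqrt{a}\log a$.

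The first step is to select, using the standing non-monotonicity assumption, the smallest local minimum $b$ of $\nu$ with $b \geq a$, and to verify $b \leq a + 2\sqrt{a}\log a$. Suppose otherwise, so that $\nu$ is monotone (or has at most a local maximum, no local minimum) on $[a, a + 2\sqrt{a}\log a]$. Using the formula
$$\frac{d}{da}\log\nu(a) = \tilde{t}_a - t_a - \frac{1}{2\frac{d^2}{dt^2}\log f(x_a)}$$
together with the pointwise bounds $\lambda''(a') \in [M_1(a)/(x_a + \sqrt{a}\log a),\, M_2(a)/(x_a - \sqrt{a}\log a)]$ and $\frac{d^2}{dt^2}\log f(x_{a'})/x_{a'} \in [M_1(a), M_2(a)]$ valid on $[a, a + 2\sqrt{a}\log a]$, I would estimate the variation of $(d/da)\log\nu$ across this window. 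The hypothesis $m(a) > 1 + 30\log a/\sqrt{a}$ forces $M_2(a)/M_1(a) > 1 + 27\log a/\sqrt{a}$ after absorbing the $(1 + 3\log a/\sqrt{a})$ factor in the definition of $m$, which exceeds the threshold required for $(d/da)\log\nu$ to change sign on the interval, producing the required local minimum and contradicting the supposition.

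With such a $b$ in hand, the bound derived earlier in the non-monotonic case yields, for every $a' \geq b$,
$$\Bigl(1 - \frac{C_1}{\sqrt{b}}\Bigr)\frac{1}{\sqrt{\tilde{p}(m(b))}} < \nu(a') < \Bigl(1 + \frac{C_2}{\sqrt{b}}\Bigr)\frac{1}{\sqrt{\tilde{q}(m(b))}},$$
and every $a' \geq a + 2\sqrt{a}\log a$ falls in this range since $b \leq a + 2\sqrt{a}\log a$. Since $b \geq a$, the defining set $\{x > x_b - \sqrt{b}\log b\}$ for $M_1(b), M_2(b)$ is contained in that for $M_1(a), M_2(a)$, so $m(b) \leq m(a)$. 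Combined with $\tilde{p}'(m) > 0$ and $\tilde{q}'(m) < 0$ and the bounds $P(m) - 1 < 0.0037(m-1)$ from the preparation section, the quantities $\tilde{p}(m(b))$ and $p_1 = p(m(a))$ differ multiplicatively by $1 + O(1/\sqrt{a})$, and likewise for $\tilde{q}(m(b))$ versus $q_1$; since $1/\sqrt{b} \leq 1/\sqrt{a}$, absorbing these discrepancies together with $C_1, C_2$ into a single constant $C_3$ yields the stated inequalities.

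The main obstacle is quantifying the first step precisely, i.e., showing that the specific numerical condition $m(a) > 1 + 30\log a/\sqrt{a}$ forces a sign change of $(d/da)\log\nu$ within the window $[a, a + 2\sqrt{a}\log a]$; the constants $30$ and $2$ are tailored to make this argument close. The remaining manipulations are routine bookkeeping using the monotonicity of $m$ and the perturbative expansions of $p, q, \tilde{p}, \tilde{q}$ already collected at the end of the preparation section.
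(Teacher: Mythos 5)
Your proposal takes a genuinely different route from the paper, and the route you take has a gap at its first step that I do not see how to close.

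The key claim in your first step is that $m(a) > 1 + 30\log a/\sqrt{a}$ forces $(d/da)\log\nu$ to change sign somewhere in $[a, a+2\sqrt{a}\log a]$, i.e., forces a local minimum of $\nu$ into that window. This is not justified, and it is stronger than what the paper establishes. The quantities $M_1(a)$ and $M_2(a)$ are the infimum and supremum of $\frac{1}{x}\frac{d^2}{dt^2}\log f(x)$ (resp. $x_a\lambda''(a)$) over the \emph{entire tail} $\{x > x_a - \sqrt{a}\log a\}$, not over the local window. A large ratio $M_2(a)/M_1(a)$ tells you that somewhere in the tail these quantities are far apart; it says nothing about where that oscillation happens, and in particular does not prevent $\frac{d}{da}\log\nu = \tilde t_a - t_a - \frac{1}{2\frac{d^2}{dt^2}\log f(x_a)}$ from keeping a fixed sign across the short interval $[a, a+2\sqrt{a}\log a]$. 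Nothing in the available pointwise bounds ties the sign of this derivative to $M_2/M_1$ being large.

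What the paper actually does in the monotone case is more indirect and never claims a local minimum in the window. Assuming $\nu$ is, say, decreasing on $[a, a+2\sqrt{a}\log a]$, the first local minimum $a_1$ is \emph{beyond} the window, and propagating the lower bound from $a_1$ already gives the lower inequality for all $a' \geq a$. For the upper inequality, the paper splits: if $\nu(a+2\sqrt{a}\log a)$ is already below the target upper bound, monotonicity of $\nu$ past that point (up to $a_1$) together with the local-maximum bound further out finishes the job. In the remaining sub-case, $\nu$ is uniformly large on the window; then formula \eqref{e-l-general} gives a strong lower bound on $x_b\lambda''(b)$ throughout the window, which via the integral representation of $h_{a_2}(x_{a_2})$ at the midpoint $a_2 = a + \sqrt{a}\log a$ forces $\nu(a_2) \leq \Delta_{a_2}(1+\log a/\sqrt{a})\sqrt{2\pi}$. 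Combined with $\Delta_{a_2} < e^{\frac{1.1}{\pi}(1-1/\sqrt{m})^2}$ and the expansion $2\pi q_1 < 1 - 0.09(m-1)$, this is numerically incompatible with $m(a) - 1 > 30\log a/\sqrt{a}$, which is exactly where the constant $30$ enters. So the contradiction lives entirely inside the "$\nu$ large on the window" sub-case; the hypothesis on $m(a)$ is not used to manufacture a critical point.

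Your remaining bookkeeping (propagation from an extremal point past $a + 2\sqrt{a}\log a$, monotonicity of $m$, the $\tilde p/\tilde q$ versus $p/q$ discrepancy being absorbed into $C_3$) is consistent with the paper and would be fine once a valid replacement for the first step is supplied. But as written, the proposal's central mechanism, namely forcing oscillation of $\nu$ inside a window of length $2\sqrt{a}\log a$ from a hypothesis on the global tail quantity $m(a)$, is the gap, and you would need to adopt something like the paper's case split and $\lambda''$-to-$\nu(a_2)$ bootstrap to close it.
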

\begin{proof}
  We have proved the case when $a$ is a local extremal point of $\nu$. Therefore if there is a local extremal point of $\nu$ on the interval $[a,a+2\sqrt{a}\log a]$, the conclusion of the proposition also follows. So we only need to consider the case when $\mu$ is monotone on $[a,a+2\sqrt{a}\log a]$. We assume that $\mu$ is decreasing. Then $\exists a_1>a+2\sqrt{a}\log a$ a local minimum point of $\nu$ such that $\nu(a_1)\leq \nu(a')$ for all $a'\in [a,a_1]$. It then follows that we have $$\nu(a')\geq (1-\frac{C_3}{\sqrt{a}})\frac{1}{\sqrt{p_1}},$$ for all $a'\geq a$. If $\nu(a+2\sqrt{a}\log a)< (1+\frac{C_3}{\sqrt{a}})\frac{1}{\sqrt{q_1}}$, then the upper bound for $a'\geq a+2\sqrt{a}\log a$ also follows. So we assume that $\nu(a+2\sqrt{a}\log a)\geq (1+\frac{C_3}{\sqrt{a}})\frac{1}{\sqrt{q_1}}$, hence $$\nu(a')\geq (1+\frac{C_3}{\sqrt{a}})\frac{1}{\sqrt{q_1}},$$ for $a'\in [a,a+2\sqrt{a}\log a]$.

  Notice that by formula \ref{e-l-general}, for all $b\geq a$ we have $$\lambda''(b)\geq q_1(1-\frac{C_4}{b})\frac{h^2_b(x_{b})}{x_{b}^2},$$
  for some $C_4$ independent of $a'$. Therefore 
  $$x_b\lambda''(b)\geq (1-\frac{C_7}{b})(1+\frac{C_3}{\sqrt{a}})^2,$$
  for $b\in [a,a+2\sqrt{a}\log a]$.

  Let $a_2=a+\sqrt{a}\log a$, then 
  $$x_{a_2}\lambda''(b)\geq (1-1.5\frac{\log a}{\sqrt{a}}),$$
  for $b\in [a,a+2\sqrt{a}\log a]$.
  So we have 
  $$\nu(a_2)\leq \Delta_{a_2}(1+\frac{\log a}{\sqrt{a}})\sqrt{2\pi}.$$
  Since $\Delta_a<e^{\frac{1.1}{\pi}(1-\frac{1}{\sqrt{m}})^2}$, we get 
  \begin{eqnarray*}
    (1+\frac{C_3}{\sqrt{a}})\frac{1}{\sqrt{q_1}}\leq e^{\frac{1.1}{\pi}(1-\frac{1}{\sqrt{m}})^2}(1+\frac{\log a}{\sqrt{a}})\sqrt{2\pi}.
  \end{eqnarray*} 
  Since $1-\alpha\leq \frac{1}{2}(m-1)$, $$e^{-\frac{1.1}{\pi}(1-\frac{1}{\sqrt{m}})^2}-1\geq -\frac{1.1}{4\pi}(m-1)^2.$$
Therefore \begin{eqnarray*}
  \sqrt{2\pi q_1}&\geq& e^{-\frac{1.1}{\pi}(1-\frac{1}{\sqrt{m}})^2}(1-\frac{\log a}{\sqrt{a}})\\
  &\geq&(1-0.01(m-1)^2)(1-\frac{\log a}{\sqrt{a}}).
\end{eqnarray*}
The assumption that $m(a)>1+\frac{\log a}{\sqrt{a}}$ implies that $2\pi q_1<1-0.09(m-1)$.Therefore
$$1-0.05(m-1)>(1-0.01(m-1)^2)(1-\frac{\log a}{\sqrt{a}}),$$
which is impossible if $m-1>30\frac{\log a}{\sqrt{a}}.$

The case when $\mu$ is increasing is similar and simpler.
\end{proof}
\begin{theorem}\label{theo-non-monotone}
  For $a$ large engough, we have $$m(a)\leq 1+\frac{114\log a}{\sqrt{a}}.$$
\end{theorem}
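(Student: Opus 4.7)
The plan is to prove the theorem by contradiction, combined with a one-step contraction argument that converts tight control on $\nu$ (given by Lemma \ref{lem-p1q1}) into tighter control on $m$. Assume for contradiction that $m(a) > 1 + 114 \log a / \sqrt{a}$ holds along some sequence $a \to \infty$. Since the intervals appearing in the definition of $M_1(b), M_2(b)$ shrink as $b$ increases, both $M_2(b)/M_1(b)$ and the prefactor $1 + 3\log b/\sqrt{b}$ are (weakly) decreasing in $b$, so $m$ is monotone non-increasing. I would then shift the base point to $a_0 = a - c\sqrt{a}\log a$ with $c$ slightly larger than $3$, chosen so that $a_0 + 2\sqrt{a_0}\log a_0 \leq a - \sqrt{a}\log a$; i.e.\ the tight $\nu$-interval provided by Lemma \ref{lem-p1q1} covers the entire interval appearing in the computation of $M_1(a), M_2(a)$. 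The hypothesis $m(a_0) > 1 + 30\log a_0/\sqrt{a_0}$ needed by the lemma follows from $m(a_0) \geq m(a)$ and $\log a_0/\sqrt{a_0} \sim \log a/\sqrt{a}$.

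Next, I would convert the tight bound on $\nu$ into tight bounds on both $x_b \lambda''(b)$ and $\frac{d^2}{dt^2}\log f(x_b)/x_b$ over $b \geq a - \sqrt{a}\log a$. Plugging $\nu(b) \in \bigl((1 - C_3/\sqrt{a_0})/\sqrt{\tilde{p}(m(a_0))},\ (1 + C_3/\sqrt{a_0})/\sqrt{\tilde{q}(m(a_0))}\bigr)$ into \eqref{e-l-general} and \eqref{e-f-general}, and using the monotonicities $\tilde{p}(m(b)) \leq \tilde{p}(m(a_0))$ and $\tilde{q}(m(b)) \geq \tilde{q}(m(a_0))$ that follow from $m(b) \leq m(a_0)$, the sup-to-inf ratio of each of these quantities on $b \geq a - \sqrt{a}\log a$ is bounded by $F(m(a_0))(1 + O(\log a/\sqrt{a}))$. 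The $\Delta_b^{\pm 2}$ factors contribute only $1 + O((m(a_0)-1)^2)$, which is genuinely subleading once $m(a) < 1.01$---a condition guaranteed for $a$ large by Theorem \ref{thm-start}.

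From this one obtains $M_2(a)/M_1(a) \leq F(m(a_0))(1 + O(\log a/\sqrt{a}))$, and therefore
\[
m(a) - 1 \leq \bigl(F(m(a_0)) - 1\bigr) + C_4 \frac{\log a}{\sqrt{a}} \leq 0.84\,\bigl(m(a_0) - 1\bigr) + C_4 \frac{\log a}{\sqrt{a}},
\]
using $F'(m) < 0.84$ for $1 \leq m < 1.01$ as recorded in the preliminaries. Setting $M = \limsup_{a\to\infty} (m(a) - 1)\sqrt{a}/\log a$ and using $\log a_0/\sqrt{a_0} \sim \log a/\sqrt{a}$ gives $M \leq 0.84\,M + C_4$, whence $M \leq C_4/0.16$. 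A careful accounting of the constants---the $C_3$ from the lemma, the factor of $3$ built into $m(a)$, and the errors from passing through \eqref{e-l-general}--\eqref{e-f-general}---should yield $C_4/0.16 < 114$, which gives the desired inequality for all sufficiently large $a$.

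The main obstacle is the bookkeeping of constants in the second step: one must square the $\nu$ bounds, multiply by the appropriate $\tilde{p}/\tilde{q}$ ratios, and absorb the $\Delta_b$ and the $(1 + 3\log a/\sqrt{a})$ prefactor simultaneously, so that the leading coefficient of $\log a/\sqrt{a}$ in the error term is tracked exactly. A secondary difficulty is verifying that the shifted base point $a_0 = a - c\sqrt{a}\log a$ really makes the tight $\nu$ interval cover the entire window $b > a - \sqrt{a}\log a$ used for $m(a)$; this pins down $c$ to roughly $3$ up to higher-order corrections and requires some care in comparing $\sqrt{a_0}\log a_0$ with $\sqrt{a}\log a$.
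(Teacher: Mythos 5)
Your overall strategy is the same as the paper's: use Lemma \ref{lem-p1q1} to get tight two-sided control on $\nu$, feed that control back through the inequalities \eqref{e-l-general}--\eqref{e-f-general} to get tight control on $x_b\lambda''(b)$ and $\frac{1}{x}\frac{d^2}{dt^2}\log f(x)$ over the window relevant to $m(a)$, and hence a one-step contraction of the form $m(a)-1 \leq \theta\,(m(a_0)-1) + C\,\frac{\log a}{\sqrt a}$ with $\theta < 1$. The observation that $m$ is (weakly) non-increasing, the shift of base point by roughly $3\sqrt a\log a$ so the lemma's conclusion window covers the window defining $M_1(a),M_2(a)$, and the treatment of $\Delta_b^{\pm 2}$ as a subleading $O((m-1)^2)$ correction are all correct.

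However, the final step has a genuine gap. You set $M = \limsup_{a\to\infty}(m(a)-1)\sqrt a/\log a$, derive $M \leq 0.84\,M + C_4$, and conclude $M \leq C_4/0.16$. This last implication is only valid if $M<\infty$, and nothing in the argument establishes that. Theorem \ref{thm-start} only gives $m(a)-1\to 0$, which is compatible with $M=\infty$ (e.g.\ $m(a)-1 \asymp (\log a)^{-1}$); and the one-step inequality, applied once, simply relates $m(a)$ to $m(a_0)$ without bounding the normalized quantity. To rule out $M=\infty$ you must iterate the one-step contraction $\sim \sqrt a/\log a$ times along a geometric sequence of base points reaching from $a$ to $2a$, so that the factor $\theta^N$ crushes the initial $m(a)-1 < 0.01$ below $\log a/\sqrt a$, and then observe the accumulated additive error is still $O(\log a/\sqrt a)$. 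That iteration is the actual content of the paper's proof. A related issue is the weak form of your contradiction hypothesis (``$m(a)>1+114\log a/\sqrt a$ along some sequence''): the paper instead reduces to showing $m(2a)\leq 1+80\log a/\sqrt a$ and then uses the monotonicity of $m$ to get the lower bound $m(a')>1+80\log a'/\sqrt{a'}$ for \emph{all} $a'\in[a,2a]$, which is what lets the lemma's hypothesis $m>1+30\log a/\sqrt a$ be verified at every step of the iteration and lets the additive error $3.5\,\frac{\log a'}{\sqrt{a'}}$ be absorbed into a clean multiplicative $0.95$ factor throughout. Your ``along a sequence'' hypothesis does not propagate over the interval you need to iterate across, so the iteration would stall.

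In short: the one-step contraction you derive is correct and matches the paper's, but the proof is not finished by a single $\limsup$ inequality; the iteration across a dyadic interval (which is where the actual contradiction is generated) is the missing ingredient.
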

\begin{proof}
  It is enough to show that for $a$ large engough, $$m(2a)\leq 1+\frac{80\log a}{\sqrt{a}}.$$ 
  We can take $a$ large enough so that $m(a)<1.01$.
Assume this is not the case, then $m(a')>1+\frac{80\log a}{\sqrt{a}},$
for $a'\in [a,2a]$. Therefore $$m(a')>1+\frac{80\log a'}{\sqrt{a'}},$$ for $a'\in [a,2a]$. 
So we can apply the conclusion of lemma \ref{lem-p1q1}. 
So let $p_1=p(m(a))$ and $q_1=q(m(a))$, then we get 
$$(1-\frac{C_3}{\sqrt{a}})\frac{1}{\sqrt{p_1}}< \nu(a')< (1+\frac{C_3}{\sqrt{a}})\frac{1}{\sqrt{q_1}},$$
for all $a'\geq a_1=a+2\sqrt{a}\log a$. Since 
\begin{equation*}
  q_1(1-O(\frac{1}{b}))\frac{h^2_b(x_b)}{x_b}\leq x_b\lambda''(b)\leq p_1(1+O(\frac{1}{b}))\frac{h^2_b(x_b)}{x_b},
 \end{equation*}
 and
 \begin{equation*}
  e^{-\frac{2.2}{\pi}(1-\frac{1}{\sqrt{m(a)}})^2}q_1(1-O(\frac{1}{\sqrt{b}}))h^2_b(x_b)\leq \frac{d^2}{dt^2}\log f(x_b)\leq p_1(1+O(\frac{1}{\sqrt{b}}))h^2_b(x_b),
 \end{equation*}
for $b\geq a_1$, we get
\begin{equation*}
  (1-\frac{C_5}{\sqrt{a}})\frac{q_1}{p_1}\leq \lambda''(b)\leq (1+\frac{C_5}{\sqrt{a}})^2\frac{p_1}{q_1},
 \end{equation*}
 and
 \begin{equation*}
  e^{-\frac{2.2}{\pi}(1-\frac{1}{\sqrt{m(a)}})^2}(1-\frac{C_6}{\sqrt{a}})\frac{q_1}{p_1}\leq \frac{1}{x_b}\frac{d^2}{dt^2}\log f(x_b)\leq (1+\frac{C_6}{\sqrt{a}})\frac{p_1}{q_1},
 \end{equation*}
for $b\geq a_1$, for some $0<C_5\leq C_6$ independent of $a$.

So let $a_1'$ satisfy $a_1'-\log a_1'\sqrt{a_1'}=a_1$, then $$M_1(a_1')>e^{-\frac{2.2}{\pi}(1-\frac{1}{\sqrt{m(a)}})^2}q_1(1-\frac{C_6}{\sqrt{a}})$$ and $$M_2(a_1')<p_1(1+\frac{C_6}{\sqrt{a}}).$$
 Therefore, \begin{eqnarray*}
  m(a_1')&<&(1+3\frac{\log a_1'}{\sqrt{a_1'}})e^{-\frac{2.2}{\pi}(1-\frac{1}{\sqrt{m(a)}})^2}\frac{p^2_1}{q^2_1}(1+\frac{3C_6}{\sqrt{a}})\\
  &<&(1+3.5\frac{\log a_1'}{\sqrt{a_1'}})e^{-\frac{2.2}{\pi}(1-\frac{1}{\sqrt{m(a)}})^2}\frac{p^2_1}{q^2_1} .
 \end{eqnarray*}

 $$m(a_1')-1<0.9(m(a)-1)(1+3.5\frac{\log a_1'}{\sqrt{a_1'}})+3.5\frac{\log a_1'}{\sqrt{a_1'}}<0.95(m(a)-1).$$
 Then for $n\geq 1$ we define $a_{n+1}=a_n'+2\sqrt{a_n'}\log a_n'$ and $a_{n+1}'$ accordingly and inductively. And by induction, we get 
 $$m(a_n')-1<0.95^{n-1}(m(a)-1).$$
 It is easy to see that when $n=\frac{\sqrt{a}}{4\log a}+1$, $a_{n}'<2a$. But $$0.95^{\frac{\sqrt{a}}{4\log a}}<\frac{\log a}{\sqrt{a}},$$
 So we have arrived at a contraction.
\end{proof}
\subsection{the general case}
\begin{theorem}\label{theo-general-log}
  For $a$ large engough, we have $$m(a)\leq 1+\frac{114\log a}{\sqrt{a}}.$$
\end{theorem}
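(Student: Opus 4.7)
My plan is to combine Theorem \ref{theo-non-monotone} with a separate argument for the complementary case in which $\nu(a)$ is monotone on $[N,\infty)$ for some $N$. In that case Theorem \ref{thm-start}, together with the consequence $x_a/a\to 1$ of $\lim \frac{d^2}{dt^2}\log f(x)/x = 1$, forces $\nu(a)\to\sqrt{2\pi}$, so $\log\nu$ is monotone and bounded and hence has finite total variation on $[N,\infty)$. A pigeonhole argument over intervals of length $\sqrt{a}\log a$ then locates, for every large $a$, a \emph{near-critical} point $a^\star\in[a,a+\sqrt{a}\log a]$ with $|\tfrac{d}{da}\log\nu(a^\star)|$ small enough that
\[
\tilde t_{a^\star}-t_{a^\star}=\frac{1}{2\frac{d^2}{dt^2}\log f(x_{a^\star})}+o(a^{-1/2})
\]
to the same precision used at a local extremum in the non-monotone subsection.

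At such a near-critical $a^\star$ I rerun the argument of Lemma \ref{lem-p1q1}: the identity above plays the role of the local-extremum identity. The companion second-order inequality (which at a genuine extremum comes from $\frac{d^2}{da^2}\log\nu\gtreqless 0$) has to be replaced, since monotonicity of $\nu$ signs only one direction. For the other direction I would use the convergence $\nu(a)\to\sqrt{2\pi}$ together with the continuity $1/\sqrt{p_1},1/\sqrt{q_1}\to\sqrt{2\pi}$ as $m\to 1$, controlling the integrated second derivative by the $O(x^{3/2})$ bound on $\frac{d^3}{dt^3}\log f$ noted at the start of Section~\ref{sec-3}. This yields, for $a'\ge a^\star$, envelopes
\[
\bigl(1-C/\sqrt{a}\bigr)/\sqrt{p_1}<\nu(a')<\bigl(1+C/\sqrt{a}\bigr)/\sqrt{q_1},
\]
i.e.\ exactly the conclusion of Lemma \ref{lem-p1q1}.

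Once this analog of Lemma \ref{lem-p1q1} is secured, the iterative contradiction of Theorem \ref{theo-non-monotone} — the map $a_n\mapsto a_{n+1}=a_n'+2\sqrt{a_n'}\log a_n'$ contracting $m(a_n')-1$ by a factor strictly below $0.95$ per step, reaching a contradiction after $\lceil\sqrt{a}/(4\log a)\rceil$ iterations inside $[a,2a]$ — applies verbatim to rule out $m(a)-1>114\log a/\sqrt{a}$.

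The main obstacle I anticipate is quantitative: making the near-critical $a^\star$ critical \emph{enough}. Pigeonhole on total variation gives $|\frac{d}{da}\log\nu(a^\star)|=o(1/\sqrt{a})$ only at the rate at which $\nu$ tends to $\sqrt{2\pi}$, and this rate is itself coupled to the bound on $m(a)$ that we are trying to prove. The resolution I have in mind is a bootstrap: on any subinterval on which $\nu$ fails to be monotone, Theorem \ref{theo-non-monotone} already gives $m=1+O(\log a/\sqrt{a})$, and hence a quantitative tail bound $\nu(a)-\sqrt{2\pi}=O(\log a/\sqrt{a})$ via the envelopes above. Feeding this back into the pigeonhole sharpens $|\frac{d}{da}\log\nu(a^\star)|$ to a size compatible with the iteration and closes the loop.
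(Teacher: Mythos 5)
Your proposal shares the paper's basic idea for the monotone case — locate quasi-critical points of $\nu$ and feed them into the Lemma~\ref{lem-p1q1} machinery — but as written it has two gaps that the paper's argument is specifically designed to avoid.

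First, the non-monotone argument does not only use $\nu'(b)=0$; it also uses the sign constraint $\frac{d^2}{da^2}\log\nu(b)\ge 0$ (resp.\ $\le 0$) coming from $b$ being a local minimum (resp.\ maximum), which is what converts into $\lambda''(b)\,\frac{d^2}{dt^2}\log f(x_b)\ge 1-O(x_b^{-1/2})$. Pigeonhole on the total variation of $\log\nu$ gives you a point where $|\frac{d}{da}\log\nu|$ is small, but says nothing about $\frac{d^2}{da^2}\log\nu$ there. Your remark about ``controlling the integrated second derivative by the $O(x^{3/2})$ bound on $\frac{d^3}{dt^3}\log f$'' does not supply this: that bound only controls the \emph{third} summand of $\frac{d^2}{da^2}\log\nu$, which both arguments already need, and does not bound $\frac{d^2}{da^2}\log\nu$ itself. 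The paper instead subdivides the monotone case according to whether $\nu'$ is itself monotone: if $\nu'$ is monotone one gets $\nu'(a)=o(1/a)$ together with a sequence $a_j$ with $\nu''(a_j)=O(1/a_j^2)$, and if $\nu'$ is not monotone one works at local minima of $\nu'$ where $\nu''=0$ exactly, using the density statement derived from $\int\nu'<\infty$ to guarantee $|\nu'|<1/a$ at some of them. That dichotomy delivers control of both $\nu'$ and $\nu''$ simultaneously, which is exactly what your pigeonhole lacks.

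Second, the bootstrap you sketch to upgrade the quantitative error does not close. You correctly note that the main term $\frac{1}{2\frac{d^2}{dt^2}\log f(x_{a^\star})}$ is of order $1/a$, so an $o(a^{-1/2})$ error in the near-critical identity is of larger order than the quantity you are trying to isolate and is therefore useless; you need an error that is at worst $O(1/a)$. Your proposed fix is to invoke Theorem~\ref{theo-non-monotone} ``on any subinterval on which $\nu$ fails to be monotone'' to get $\nu(a)-\sqrt{2\pi}=O(\log a/\sqrt a)$ and thereby shrink the tail variation. But the case under discussion is precisely the one in which $\nu$ \emph{is} eventually monotone, so beyond some $N$ there are no such subintervals, and Theorem~\ref{theo-non-monotone} gives you nothing. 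The total variation of $\log\nu$ on $[a,\infty)$ is therefore only known to be $o(1)$ at this stage, and pigeonhole on $[a,a+\sqrt a\log a]$ yields $|\frac{d}{da}\log\nu(a^\star)|=o\bigl(\frac{1}{\sqrt a\log a}\bigr)$, which is still larger than $1/a$ and hence too coarse. The paper sidesteps all of this by extracting $\nu'=o(1/a)$ (or the local-minimum-of-$\nu'$ points with $|\nu'|<1/a$) directly from convergence and the density of $\{a:\nu'(a)\ge 1/a\}$, rather than from a variation pigeonhole over short intervals.
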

\begin{proof}
  The non-monotonic case has been proved in theorem \ref{theo-non-monotone}. If $\nu(a)$ is monotone, we assume, for example, 
  that $\nu'(a)\geq 0$ for $a$ large enough.
  Then $\int_{a_0}^\infty \nu'(a)da<\infty$, so the asymptotic density of the set $E_T=\{a\leq T|\nu'(a)\geq \frac{1}{a} \}$ satisfies
  $$\lim_{T\to \infty}\frac{\mu(E_T)}{T}=0,$$
  where $\mu$ is the Lebesgue measure.
  Since $\tilde t_a=t_{a+1}+O(\frac{\log a}{\sqrt{a}})$, we have $\lim_{a\to \infty}\nu'(a)=0$. 
  There are two possibilities:
  \begin{itemize}
    \item  $\nu'(a)$ is monotone, then we have $\nu'(a)=o(\frac{1}{a})$ and there is a sequence $a_j\to\infty$ so that $\nu''(a_j)=O(\frac{1}{a_j^2})$; 
    \item $\nu'(a)$ is not monotone, then around a local minimum where $|\nu'(a)|<\frac{1}{a}$, we have $\nu''(a)=0$.
  \end{itemize}
  In both cases, we can apply the argument for the non-monotonic case to get the claimed estimates in the theorem.
  The case $\nu'(a)< 0$ is similar.
  
\end{proof}
As a direct application, we get the following theorem.
\begin{theorem}\label{theo-log-sqrt}
  We have
  \begin{equation}
    \frac{h_a(x_a)}{\sqrt{x_a}}=\sqrt{2\pi}+O(\frac{\log a}{\sqrt{a}}),
    \end{equation}
    \begin{equation}
    x_a\lambda''(a)=1+O(\frac{\log a}{\sqrt{a}}),
    \end{equation}and 
    \begin{equation}
      \frac{1}{x}\frac{d^2}{dt^2}\log f(x)=1+O(\frac{\log x}{\sqrt{x}})
    \end{equation}
\end{theorem}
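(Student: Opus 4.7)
The plan is to trace how Theorem \ref{theo-general-log}, giving $m(a) - 1 = O(\log a / \sqrt{a})$, propagates through the narrowing machinery developed in Section \ref{sec-3}. Since the derivative bounds recalled in Section 2.1 give $\tilde p(m) - \frac{1}{2\pi} = O(m-1)$ and $\frac{1}{2\pi} - \tilde q(m) = O(m-1)$, substitution yields $p_1, q_1 = \frac{1}{2\pi} + O(\log a / \sqrt a)$; together with the $\Delta$ bound this forces $\Delta_a = 1 + O((\log a / \sqrt a)^2)$. These two observations are the only new inputs needed.

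To establish the first assertion, I would revisit the sandwich on $\nu$ used in the proof of the non-monotonic case. At a local minimum $b$ of $\nu$ one has $\nu(b) \geq (1 - C_1/\sqrt{b})/\sqrt{p_1}$, and at a local maximum $b'$ one has $\nu(b') \leq (1 + C_2/\sqrt{b})/\sqrt{q_1}$. With $p_1, q_1 = \frac{1}{2\pi}(1 + O(\log/\sqrt))$, both ends of the sandwich collapse to $\sqrt{2\pi}(1 + O(\log a / \sqrt a))$; interpolating across intervals between consecutive extrema gives the same estimate for all large $a$. In the strictly monotone case I would reuse the small-$\nu''$ point-selection argument from the proof of Theorem \ref{theo-general-log} to access quasi-extremal points where the same analysis applies.

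Assertions (2) and (3) then follow by plugging the sharp value $h_a^2(x_a) = 2\pi x_a (1 + O(\log a / \sqrt a))$ into the sandwich formulas (\ref{e-l-general}) and (\ref{e-f-general}): both the upper and lower ends collapse to $1/x_a$ and $x_a$ respectively with error $O(\log a / \sqrt a)$, since $p_1, q_1$ are at $\frac{1}{2\pi}$ up to this order and $\Delta_a^2$ contributes only a harmless $O((\log/\sqrt)^2)$ factor. For (3) I then pass from $x = x_a$ to general large $x$ using that $a \mapsto x_a$ is continuous and surjective onto a neighborhood of infinity with $x_a / a \to 1$, so $\log a / \sqrt a \asymp \log x / \sqrt x$. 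I do not expect a real obstacle here; the entire argument is bookkeeping, and the only delicate point is keeping the implicit constants uniform in the interpolation step between consecutive extrema of $\nu$ so that the $O(\log/\sqrt)$ error does not degrade.
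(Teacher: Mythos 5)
Your route is genuinely different from the paper's: you derive the first estimate on $\nu(a)=h_a(x_a)/\sqrt{x_a}$ first by revisiting the $\nu$-sandwich, and then feed $h_a^2(x_a)$ into \eqref{e-l-general} and \eqref{e-f-general} to get the other two. The paper does the opposite, and for a reason.

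The gap is in the "interpolating across intervals between consecutive extrema" step. The $\nu$-sandwich produces, at a local minimum $b$, the bound $\nu(b)>(1-C_1/\sqrt{b})/\sqrt{p_1(m(b))}$ and, for all $a\geq b$, only the uniform bound $\nu(a)>(1-C_1/\sqrt{b})/\sqrt{p_1(m(b))}$, whose error is $O(\log b/\sqrt{b})$, not $O(\log a/\sqrt{a})$. Given $a$ between consecutive extrema $b''<a<b'$, one side of $\nu(b'')\leq\nu(a)\leq\nu(b')$ always uses the extremum lying \emph{below} $a$, so the error there is $O(\log b''/\sqrt{b''})$, which is worse than $O(\log a/\sqrt{a})$ unless you can control the gap $a-b''$. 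Nothing in Section \ref{sec-3} shows extremal points (or the quasi-extremal points from the monotone-case argument) are dense on the scale needed; the iteration in Theorem \ref{theo-non-monotone} walks in steps of size $O(\sqrt{a}\log a)$ but under the hypothesis $m(a)>1+30\log a/\sqrt{a}$, which is precisely what Theorem \ref{theo-general-log} rules out, so it cannot be invoked post hoc to localise $\nu(a)$.

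The paper bypasses $\nu$ entirely for (2) and (3). Since $M_1(b)$ and $M_2(b)$ are the inf and sup of $x_a\lambda''(a)$ and of $\frac{1}{x}\frac{d^2}{dt^2}\log f(x)$ over a shrinking tail, $M_1$ is nondecreasing, $M_2$ is nonincreasing, and Theorem \ref{thm-start} forces both to converge to $1$; hence $M_1(a)\leq 1\leq M_2(a)$. Theorem \ref{theo-general-log} gives $M_2(a)/M_1(a)=1+O(\log a/\sqrt{a})$, so both $M_1(a)$ and $M_2(a)$ are $1+O(\log a/\sqrt{a})$. Since $a$ lies in the tail defining $M_1(a),M_2(a)$, assertions (2) and (3) follow at once — no interpolation, no $\nu$. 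Assertion (1) is then deduced from (3): with $g_a''(t)=\frac{d^2}{dt^2}\log f(x)=x(1+O(\log a/\sqrt{a}))$ on the concentration interval, the Gaussian approximation gives $\int e^{-g_a(t)}dt=\sqrt{2\pi/x_{a+1}}\,(1+O(\log a/\sqrt{a}))$, hence $h_a(x_{a+1})=x_{a+1}\int e^{-g_a}dt=\sqrt{2\pi x_{a+1}}\,(1+O(\log a/\sqrt{a}))$, and finally $h_a(x_a)=h_a(x_{a+1})(1+O(1/a))$ together with $x_{a+1}=x_a(1+O(1/a))$ gives (1). If you reorder your argument in this way, the interpolation step disappears and the proof closes.
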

\begin{proof}
  The second and third estimates follow directly from theorem \ref{theo-general-log}. The first estimate follows from theorem \ref{theo-general-log} and the equation $$\frac{h_a(x_{a+1})}{x_{a+1}}=\int_{0}^{\infty}e^{-g_a(t)}dt.$$
\end{proof}
\begin{cor}\label{cor-tt-1}
  We have 
  $\tilde{t}_a-t_{a+1}=O(\frac{\log a}{a})$.
\end{cor}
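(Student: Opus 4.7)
The plan is to interpret $\tilde t_a - t_{a+1}$ as the mean of a suitable log-concave probability density and perform a quantitative Laplace analysis using the estimates of Theorem \ref{theo-log-sqrt}. Since $1/h_a$ is a probability density on $(0,\infty)$, the formula $\lambda''(a)=\int_0^\infty (\log x - \tilde t_a)^2/h_a(x)\,dx$ expresses $\lambda''(a)$ as the variance of $\log x$ under this density, forcing $\tilde t_a=\int_0^\infty \log x/h_a(x)\,dx$ to be the corresponding mean. Changing variable $t=\log x$ and using the definition of $g_a$, this becomes
$$\tilde t_a \;=\; \frac{\int_{\R} t\,e^{-g_a(t)}\,dt}{\int_{\R} e^{-g_a(t)}\,dt}.$$
Setting $G(s):=g_a(s+t_{a+1})$, which satisfies $G(0)=G'(0)=0$, one has $\tilde t_a - t_{a+1} = \bar t(G)$, reducing the task to showing $\bar t(G) = O(\log a/a)$.

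By Theorem \ref{theo-log-sqrt}, uniformly for $|s|\leq \log a/\sqrt a$,
$$G''(s)\;=\;\frac{d^2}{dt^2}\log f(x_{a+1}e^s) \;=\; x_{a+1}\bigl(1+O(\log a/\sqrt a)\bigr),$$
while by Remark \ref{rmk-1} the contributions from $|s|>\log a/\sqrt a$ to both the numerator and denominator of $\bar t(G)$ are $\epsilon(a)$. I now rescale $r = s\sqrt{x_{a+1}}$ and set $\tilde G(r):=G(r/\sqrt{x_{a+1}})$; then $\tilde G''(r) = 1+\phi(r)$ with $|\phi(r)|=O(\log a/\sqrt a)$ on $|r|\leq \log a$, a range easily containing the effective support of $e^{-r^2/2}$. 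Writing $\tilde G(r) = r^2/2 + \Phi(r)$ with $\Phi(r) := \int_0^r(r-u)\phi(u)\,du$, so that $|\Phi(r)|\leq (\log a/\sqrt a)\,r^2/2$, the expansion
$$e^{-\tilde G(r)} \;=\; e^{-r^2/2}\bigl(1-\Phi(r)+O(\Phi(r)^2)\bigr),$$
combined with $\int_{\R} r\,e^{-r^2/2}\,dr = 0$ and the standard finiteness of all Gaussian moments, yields
$$\int_{\R} r\,e^{-\tilde G(r)}\,dr \;=\; -\int_{\R} r\,\Phi(r)\,e^{-r^2/2}\,dr \,+\, O\bigl((\log a/\sqrt a)^2\bigr) \;=\; O(\log a/\sqrt a),$$
while $\int_{\R} e^{-\tilde G(r)}\,dr = \sqrt{2\pi}+O(\log a/\sqrt a)$ stays bounded away from $0$.

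Combining the two estimates gives $\bar t(\tilde G)=O(\log a/\sqrt a)$; unraveling the rescaling,
$$|\tilde t_a - t_{a+1}| \;=\; |\bar t(G)| \;=\; |\bar t(\tilde G)|/\sqrt{x_{a+1}} \;=\; O(\log a/\sqrt a)/\sqrt a \;=\; O(\log a/a),$$
as claimed. The main technical point is that $\bar t(G)$ must scale as $(m-1)/\sqrt{M_1}$ rather than as the naive $\|g_a'''\|/g_a''(0)^2 = O(1/\sqrt a)$ one would get from a third-order Taylor expansion. The perturbative argument above delivers exactly this because only the $C^0$ bound on $\phi$ (which is $O(\log a/\sqrt a)$ by Theorem \ref{theo-log-sqrt}) enters the Laplace computation, so no direct control on $G'''$ is needed; and the $\epsilon(a)$ tail contributions are negligible against the polynomially-sized bulk of the integrals.
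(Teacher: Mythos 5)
Your proposal is correct and rests on the same core idea as the paper's proof: express $\tilde t_a - t_{a+1}$ as the ratio $\bar t(g_a) = \int t\,e^{-g_a}\,dt \,/\, \int e^{-g_a}\,dt$, recognize that the leading Gaussian approximation contributes nothing to the numerator by symmetry, bound the residual contribution to the numerator by $O(\log a/\sqrt a)\cdot\frac{1}{x_{a+1}}$, and divide by the denominator $\sim\sqrt{2\pi/x_{a+1}}$ to land on $O(\log a/a)$. The only difference is bookkeeping: the paper estimates the two one-sided integrals $\int_{t_{a+1}}^{\pm\infty}(t-t_{a+1})e^{-g_a}\,dt$ separately to within relative error $1+O(\log a/\sqrt a)$ by comparison with Gaussian envelopes of slightly different variances (the $M_1,M_2$ bounds), and lets the leading terms cancel; you instead rescale to unit variance and run a first-order perturbative expansion $e^{-\tilde G}=e^{-r^2/2}(1-\Phi+O(\Phi^2))$ with $|\Phi(r)|\leq\tfrac12(\log a/\sqrt a)r^2$, using $\int r\,e^{-r^2/2}\,dr=0$ to kill the zeroth order. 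These are two renderings of the same cancellation; both are valid, and your version makes the source of the gain slightly more transparent at the cost of introducing the auxiliary function $\Phi$.
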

\begin{proof}
  We have $\tilde{t}_a-t_{a+1}=\int_{0}^{\infty}\frac{(t-t_{a+1})dx}{h_a(x)}$. Write $\frac{dx}{h_a(x)}=\eta(a)e^{-g_a(t)}dt$.  Since $\int_{0}^{\infty}\frac{dx}{h_a(x)}=1$, 
  $$\tilde{t}_a-t_{a+1}=\frac{\int_{-\infty}^{\infty}(t-t_{a+1})e^{-g_a(t)}dt}{\int_{-\infty}^{\infty}e^{-g_a(t)}dt}.$$
  By theorem \ref{theo-log-sqrt} we have $$\int_{-\infty}^{\infty}e^{-g_a(t)}dt=(1+O(\frac{\log a}{\sqrt{a}}))\sqrt{\frac{2\pi}{x_{a+1}}} ,$$
  $$\int_{t_{a+1}}^{\infty}(t-t_{a+1})e^{-g_a(t)}dt=(1+O(\frac{\log a}{\sqrt{a}}))\frac{1}{x_{a+1}},$$
  and $$\int_{-\infty}^{t_{a+1}}(t-t_{a+1})e^{-g_a(t)}dt=(1+O(\frac{\log a}{\sqrt{a}}))\frac{-1}{x_{a+1}}.$$
  Therefore $$\tilde{t}_a-t_{a+1}=O(\frac{\log a}{\sqrt{a}})\cdot \frac{1}{\sqrt{x_{a+1}}}=O(\frac{\log a}{a}).$$
\end{proof}
In the same way, one can prove the following corollaries:
\begin{cor}
  $$\int\frac{(\log x-t_{a+1})^3  dx}{h_a(x)}=O(\frac{\log a}{a^{2}}).$$
\end{cor}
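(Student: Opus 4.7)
The plan is to mimic the proof of Corollary \ref{cor-tt-1}. Via the change of variable $t=\log x$ we have $dx/h_a(x)=e^{-g_a(t)}\,dt/A(g_a)$, where $A(g_a):=\int_{-\infty}^{\infty}e^{-g_a(t)}\,dt = h_a(x_{a+1})/x_{a+1}$. Thus
$$\int_0^{\infty}\frac{(\log x-t_{a+1})^3\,dx}{h_a(x)}=\frac{N}{A(g_a)},\qquad N:=\int_{-\infty}^{\infty}(t-t_{a+1})^3 e^{-g_a(t)}\,dt.$$
Theorem \ref{theo-log-sqrt} gives $A(g_a)=(1+O(\log a/\sqrt{a}))\sqrt{2\pi/x_{a+1}}$, so it is enough to prove $|N|=O(\log a/(\sqrt{a}\,x_{a+1}^2))$.

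Set $L_a=\log a/\sqrt{a}$. Theorem \ref{theo-log-sqrt} yields $g_a''(t)=\frac{d^2}{dt^2}\log f(e^t)=x_{a+1}(1+O(L_a))$ uniformly for $|t-t_{a+1}|\le L_a$, since the corresponding range of $x$ is $x_{a+1}(1+O(L_a))$. Using $g_a(t_{a+1})=g_a'(t_{a+1})=0$ and integrating the bound on $g_a''$ twice produces the quadratic sandwich
$$\frac{x_{a+1}}{2}(1-CL_a)\tau^2\le g_a(t_{a+1}+\tau)\le \frac{x_{a+1}}{2}(1+CL_a)\tau^2,\qquad |\tau|\le L_a,$$
for some constant $C$ independent of $a$. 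Split $N=N^++N^-$ with $N^\pm:=\pm\int_0^\infty \tau^3 e^{-g_a(t_{a+1}\pm\tau)}\,d\tau$. By Remark \ref{rmk-1} the parts of each $N^\pm$ coming from $|\tau|>L_a$ are $\epsilon(a)$, and on $[0,L_a]$ the sandwich combined with the elementary identity $\int_0^{\infty}\tau^3 e^{-B\tau^2/2}\,d\tau=2/B^2$ gives
$$N^\pm=\pm\frac{2}{x_{a+1}^2}\bigl(1+O(L_a)\bigr)+\epsilon(a).$$

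Adding these, the leading terms $\pm 2/x_{a+1}^2$ cancel and only the $O(L_a)$ corrections remain, so $N=O(L_a/x_{a+1}^2)$. Dividing by $A(g_a)=\Theta(1/\sqrt{x_{a+1}})$ then gives $N/A(g_a)=O(L_a/x_{a+1}^{3/2})=O(\log a/a^2)$, as required. The only delicate point is the cancellation between $N^+$ and $N^-$: each half-integral is individually of order $1/x_{a+1}^2$, and the saving factor $L_a$ arises solely from the near-symmetry of $g_a$ about $t_{a+1}$, which is precisely the content of the sharpened estimate $g_a''(t)=x_{a+1}(1+O(L_a))$ from Theorem \ref{theo-log-sqrt}.
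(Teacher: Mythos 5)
Your proof is correct and follows exactly the approach the paper intends by its phrase ``in the same way'': split the integral at $t_{a+1}$, estimate each half as $\pm\frac{2}{x_{a+1}^2}(1+O(\log a/\sqrt{a}))$ via the two-sided Gaussian sandwich coming from $g_a''(t)=x_{a+1}(1+O(\log a/\sqrt a))$ near $t_{a+1}$, and exploit the cancellation of the leading terms — precisely the structure of the proof of Corollary~\ref{cor-tt-1}. The change of variable, the identification $A(g_a)=h_a(x_{a+1})/x_{a+1}$, and the final bookkeeping $O(L_a/x_{a+1}^{3/2})=O(\log a/a^2)$ are all in order.
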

\begin{cor}
  We have 
  $a-n_{x_a}=O(\log a)$.
\end{cor}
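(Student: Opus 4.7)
The plan is to mimic the proof of the previous corollary on $\tilde{t}_a-t_{a+1}$, but working with the discrete measure $\tau_{x_a}$ on $\Z_{\geq 0}$ instead of the continuous measure $dx/h_a(x)$ on $(0,\infty)$. The starting point is the identity
$$a - \tilde{a} \;=\; \sum_i (i - \tilde{a})\,\tau_{x_a}(i) \;=\; \frac{\sum_i (i - \tilde{a})\, e^{-G_a(i)}}{\sum_i e^{-G_a(i)}},$$
which follows at once from $\sum_i \tau_{x_a}(i)=1$ and $\sum_i i\,\tau_{x_a}(i)=u(x_a)=a$ together with $\tau_{x_a}(i)=(\Delta_a/h_a(x_a))\,e^{-G_a(i)}$. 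The task is then to estimate numerator and denominator.

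For the denominator, remark \ref{rmk-1} restricts the effective range to $|i-\tilde{a}|\le \sqrt{a}\log a$ up to $\epsilon(a)$, and theorem \ref{theo-log-sqrt} applied at each $i$ in this window gives $G_a''(i)=\lambda''(i)=(1/x_a)(1+O(\log a/\sqrt{a}))$. So $G_a(i)$ agrees with the quadratic $(i-\tilde{a})^2/(2x_a)$ up to a multiplicative factor $1+O(\log a/\sqrt{a})$. Comparing the resulting Gaussian-like sum with its integral (the sum-integral error being $\epsilon(a)$ since the variance $\sim x_a$ is large, so the non-zero Poisson frequencies decay like $e^{-ca}$) yields $\sum_i e^{-G_a(i)}=(1+O(\log a/\sqrt{a}))\sqrt{2\pi x_a}$.

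For the numerator I would split at $\tilde{a}$ and write it as $S_+-S_-$, where $S_\pm=\sum_{\pm(i-\tilde{a})>0}|i-\tilde{a}|\,e^{-G_a(i)}$. In the leading Gaussian approximation each $S_\pm$ equals the half-Gaussian integral $\int_0^\infty y\,e^{-y^2/(2x_a)}\,dy=x_a$. The quadratic approximation of $G_a$ carries a multiplicative error $1+O(\log a/\sqrt{a})$ whose size is controlled by theorem \ref{theo-log-sqrt} but whose sign may differ on the two sides, so $S_\pm=x_a(1+O(\log a/\sqrt{a}))$ and $|S_+-S_-|=O(\sqrt{a}\,\log a)$. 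Dividing by $\sqrt{2\pi x_a}$ delivers $|a-\tilde{a}|=O(\log a)$.

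The main obstacle is the cancellation in the numerator: a naive triangle inequality only gives $O(x_a)=O(a)$, hence $O(\sqrt{a})$ after division, which is far too weak. One must exploit the near-symmetry of $e^{-G_a(i)}$ about $\tilde{a}$. The cleanest route is to factor out the common Gaussian $e^{-(i-\tilde{a})^2/(2x_a)}$ and bound the multiplicative residue $|e^{-G_a(i)+(i-\tilde{a})^2/(2x_a)}-1|=O(\log a/\sqrt{a})$ uniformly on the concentration window, which follows from the $G_a''$ bounds inherited from theorem \ref{theo-log-sqrt}. Discretization corrections (sum vs.\ integral) contribute only $\epsilon(a)$ and so are negligible here.
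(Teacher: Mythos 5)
Your proof is correct and follows essentially the same path as the paper's: write $a-n_{x_a}$ as a ratio of moments of $e^{-G_a(i)}$, estimate the denominator as $\sqrt{2\pi x_a}\,(1+O(\log a/\sqrt{a}))$ via the Gaussian approximation from Theorem \ref{theo-log-sqrt}, split the numerator at $\tilde a$, show each half-sum is $x_a(1+O(\log a/\sqrt{a}))$, and conclude from the resulting $O(\sqrt{a}\log a)$ cancellation. The only point where you overclaim slightly is in asserting that the sum-versus-integral discrepancy is $\epsilon(a)$ at this stage (the paper's own proof records it more conservatively as a $1+O(1/\sqrt{a})$ factor, and the $\epsilon(a)$-level control via Poisson summation is only developed later, in section \ref{sec-looseends}); but since that discrepancy is in any case dominated by the $O(\log a/\sqrt{a})$ Gaussian-approximation error, this does not affect the argument.
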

\begin{proof}
  We can write $$a-n_{x_a}=\frac{\sum_{i=0}^{\infty}(i-n_{x_a})e^{-G_a(i)}}{\sum_{i=0}^{\infty}e^{-G_a(i)}}$$
  We have \begin{eqnarray*}
    \sum_{i=0}^{n_{x_a}}(i-n_{x_a})e^{-G_a(i)}&=&(1+O(\frac{1}{\sqrt{a}}))\int_{0}^{n_{x_a}}(i-n_{x_a})e^{-G_a(i)}di\\
    &=&(1+O(\frac{\log a}{\sqrt{a}}))\int_{0}^{n_{x_a}}(i-n_{x_a})e^{-\frac{1}{2x_a}(i-n_{x_a})^2}di\\
    &=&(1+O(\frac{\log a}{\sqrt{a}}))x_a
  \end{eqnarray*}
  And similarly, $$\sum_{i=n_{x_a}}^{\infty }(i-n_{x_a})e^{-G_a(i)}=-(1+O(\frac{\log a}{\sqrt{a}}))x_a.$$
   Therefore $$\sum_{i=0}^{\infty}(i-n_{x_a})e^{-G_a(i)}=O(\sqrt{a}\log a).$$
  Then since $$\sum_{i=0}^{\infty}e^{-G_a(i)}=(1+O(\frac{\log a}{\sqrt{a}}))\sqrt{2\pi x_a} ,$$
  we get the conclusion.
\end{proof}

\begin{cor}
  $$\frac{\sum_{i=0}^{\infty}(i-n_{x_a})^3e^{-G_a(i)}}{\sum_{i=0}^{\infty}e^{-G_a(i)}}=O(a\log a).$$
\end{cor}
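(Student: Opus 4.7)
The plan is to follow the exact same template as the preceding corollary on $a-n_{x_a}$: split the sum at $n_{x_a}$, replace sums by integrals, and approximate $e^{-G_a}$ by a Gaussian centered at $\tilde a = n_{x_a}$. Because $(i-n_{x_a})^3$ is odd about $\tilde a$, the leading contributions from the two halves cancel, leaving only the small asymmetry coming from the $(1+O(\log a/\sqrt a))$ relative error in the Gaussian approximation.

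Concretely, I would write
$$\sum_{i=0}^{\infty}(i-n_{x_a})^3 e^{-G_a(i)} = \sum_{i=0}^{n_{x_a}}(i-n_{x_a})^3 e^{-G_a(i)} + \sum_{i > n_{x_a}}(i-n_{x_a})^3 e^{-G_a(i)}.$$
By the mass concentration stated in Remark \ref{rmk-1}, the parts where $|i-\tilde a| > \sqrt{a}\log a$ contribute only $\epsilon(a)$; since $n_{x_a}\sim a \gg \sqrt a \log a$, the same is true of the truncation at $i=0$. Each of the remaining partial sums can then be replaced by its corresponding integral at a cost of $O(1/\sqrt a)$, exactly as in the proof of the $(a-n_{x_a})$ corollary.

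Next, theorem \ref{theo-log-sqrt} together with $G_a(\tilde a) = G_a'(\tilde a) = 0$ yields the uniform expansion
$$G_a(i) = \frac{1}{2x_a}(i-n_{x_a})^2\bigl(1+O(\tfrac{\log a}{\sqrt a})\bigr)$$
on the window $|i-n_{x_a}|\leq \sqrt a \log a$. Substituting $y = i-n_{x_a}$ and using $\int_0^\infty y^3 e^{-y^2/(2x_a)}\,dy = 2x_a^2$ gives
$$\int_0^{n_{x_a}}(i-n_{x_a})^3 e^{-G_a(i)}\,di = -(1+O(\tfrac{\log a}{\sqrt a}))\cdot 2x_a^2,$$
$$\int_{n_{x_a}}^{\infty}(i-n_{x_a})^3 e^{-G_a(i)}\,di = +(1+O(\tfrac{\log a}{\sqrt a}))\cdot 2x_a^2.$$
Adding these, the $\pm 2x_a^2$ leading terms cancel, and the total is at most $O(\log a/\sqrt a)\cdot x_a^2 = O(a^{3/2}\log a)$. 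Dividing by the normalizer $\sum_i e^{-G_a(i)} = (1+O(\log a/\sqrt a))\sqrt{2\pi x_a}$ (already established in the proof of the preceding corollary) produces the stated bound $O(a \log a)$.

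The only step that is not completely automatic from the previous corollaries is the antisymmetric cancellation; without it the numerator would be of size $a^2$ and the quotient only $O(a^{3/2})$. So the main (mild) obstacle is to make sure that the $O(\log a/\sqrt a)$ relative error in the Gaussian approximation is \emph{the same uniform error} across the support on both sides of $\tilde a$, and this is exactly what theorem \ref{theo-log-sqrt} supplies. No new machinery is required.
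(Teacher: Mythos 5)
Your proposal is correct and follows exactly the template the paper implicitly intends (the paper omits the proof, but the preceding corollary on $a-n_{x_a}$ with its proof establishes the pattern: split at $\tilde a=n_{x_a}$, replace sums by integrals, approximate $e^{-G_a}$ by the Gaussian $e^{-(i-\tilde a)^2/2x_a}$ with $(1+O(\log a/\sqrt a))$ relative error on the window $|i-\tilde a|\le\sqrt a\log a$, cancel the odd leading terms, and divide by the normalizer $\sim\sqrt{2\pi x_a}$). The arithmetic $\int_0^\infty y^3 e^{-y^2/(2x_a)}dy=2x_a^2$ is right, the residual after cancellation is $O(\tfrac{\log a}{\sqrt a})x_a^2=O(a^{3/2}\log a)$, and division by $\sqrt a$ yields $O(a\log a)$ as claimed. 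No gaps; this is the paper's route.
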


To simplify the notations, we write $J_i=\int_0^\infty\frac{(\log x-\tilde{t}_a)^idx}{h_a(x)}$ and $H_j=\frac{\sum_{i=0}^{\infty}(i-a)^j g_a(i)}{\sum_{i=0}^{\infty} g_a(i)}$ , we then have the following:
$$\lambda''(a)=J_2,$$
$$\frac{d^3}{da^3}\lambda(a)=J_3, $$
$$\frac{d^2}{dt^2}\log f(x_a)=H_2,$$
and 
$$\frac{d^3}{dt^3}\log f(x_a)=H_3. $$

We also denote by $I_i=\int_0^\infty\frac{(\log x-t_{a+1})^idx}{h_a(x)}$, $\delta_a=t_{a+1}-\tilde{t}_a$, $K_j=\frac{\sum_{i=0}^{\infty}(i-n_{x_a})^j g_a(i)}{\sum_{i=0}^{\infty} g_a(i)}$ and $\sigma_a=a-n_{x_a}$. Then 
$$J_3=I_3+3\delta_aI_2-2\delta_a^3,$$
and $$H_3=K_3+3\sigma_aK_2-2\sigma_a^3.$$
Since
$$I_2=J_2+\delta_a^2=\frac{1}{x_{a+1}}+O(\frac{(\log a)^2}{a^{2}}),$$
and we have proved
$$I_3=O(\frac{\log a}{a^{2}}), $$
we have $$J_3=O(\frac{\log a}{a^{2}})$$
Therefore, we have proved the following
\begin{prop}\label{prop-lambda3}
  $$\frac{d^3}{da^3}\lambda(a)=O(\frac{\log a}{a^{2}}).$$
\end{prop}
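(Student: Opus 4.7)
The plan is to assemble the proposition directly from the identities and moment estimates that have just been proved in this section, since $\frac{d^3}{da^3}\lambda(a) = J_3$ by definition. First I would write the third central moment $J_3$ in terms of moments centred at $t_{a+1}$ instead of $\tilde t_a$. Since $dx/h_a(x)$ is a probability measure on $(0,\infty)$, expanding $(\log x - \tilde t_a)^3 = ((\log x - t_{a+1}) + \delta_a)^3$ and integrating term by term, while using $I_1 = \tilde t_a - t_{a+1} = -\delta_a$, gives
$$J_3 = I_3 + 3\delta_a I_2 + 3\delta_a^2 I_1 + \delta_a^3 = I_3 + 3\delta_a I_2 - 2\delta_a^3,$$
which is exactly the identity already displayed in the text.

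Next I would plug in the asymptotics. Corollary \ref{cor-tt-1} gives $\delta_a = O(\log a/a)$. The preceding corollary (the one stating $\int (\log x - t_{a+1})^3 \, dx/h_a(x) = O(\log a/a^2)$) provides $I_3 = O(\log a/a^2)$. For $I_2$, the relation $I_2 = J_2 + \delta_a^2$ combined with $J_2 = \lambda''(a) = x_a^{-1}(1 + O(\log a/\sqrt a))$ from Theorem \ref{theo-log-sqrt} and $\delta_a^2 = O((\log a)^2/a^2)$ yields $I_2 = O(1/a)$, since $x_{a+1} = a + O(1)$.

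Finally I would combine the three contributions:
$$3\delta_a I_2 = O(\log a/a)\cdot O(1/a) = O(\log a/a^2), \qquad 2\delta_a^3 = O((\log a)^3/a^3),$$
the latter being easily absorbed into $O(\log a/a^2)$. Adding $I_3 = O(\log a/a^2)$ gives $J_3 = O(\log a/a^2)$, which is the claim. In this sense there is no real obstacle here: the proposition is bookkeeping on top of the preceding two corollaries. The genuine work was already done upstream, namely in controlling $\delta_a$ via Corollary \ref{cor-tt-1} and in bounding the third moment $I_3$ around the minimum $t_{a+1}$ of $g_a$, both of which ultimately rest on the $O(\log a/\sqrt a)$ refinement of $\frac{d^2}{dt^2}\log f(x)/x$ coming from Theorem \ref{theo-general-log}.
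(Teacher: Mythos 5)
Your proof is correct and follows the same route as the paper: expand $(\log x-\tilde t_a)^3$ about $t_{a+1}$ to get the identity $J_3=I_3+3\delta_a I_2-2\delta_a^3$, then feed in $\delta_a=O(\log a/a)$ from Corollary \ref{cor-tt-1}, $I_3=O(\log a/a^2)$ from the following corollary, and $I_2=O(1/a)$ via $I_2=J_2+\delta_a^2$. The only difference is that you spell out the binomial expansion and the cancellation using $I_1=-\delta_a$, which the paper leaves implicit.
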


Similarly, we have 
\begin{prop}\label{prop-f3}
  $$\frac{d^3}{dt^3}\log f(x)=O(x\log x).$$
\end{prop}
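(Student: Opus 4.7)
My plan is to mirror the argument used for Proposition \ref{prop-lambda3}, but applied to the discrete moments $H_j$ on the $i$-side rather than the continuous moments $J_i$ on the $t$-side. The key identity is the already-recorded decomposition
$$H_3 = K_3 + 3\sigma_a K_2 - 2\sigma_a^3,$$
together with the fact that $\frac{d^3}{dt^3}\log f(x_a)=H_3$. So to prove $\frac{d^3}{dt^3}\log f(x)=O(x\log x)$ it is enough, in view of $x_a\sim a$, to show that $H_3=O(a\log a)$.

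First, I would recall the three ingredients already in hand: (i) the previous corollary gives $K_3=O(a\log a)$; (ii) the corollary on $a-n_{x_a}$ gives $\sigma_a=O(\log a)$; (iii) Theorem \ref{theo-log-sqrt} gives $H_2=\frac{d^2}{dt^2}\log f(x_a)=x_a(1+O(\log a/\sqrt{a}))=O(a)$. Next I would obtain $K_2=O(a)$ by relating it to $H_2$ through the same binomial expansion trick used for $H_3$: writing $(i-a)^2=(i-n_{x_a})^2-2\sigma_a(i-n_{x_a})+\sigma_a^2$ and using $K_1=\sigma_a$ (this is just the statement that the mean of the weights $\tau_{x_a}$ is $a$) yields $H_2=K_2-\sigma_a^2$, so $K_2=H_2+\sigma_a^2=O(a)+O((\log a)^2)=O(a)$.

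With these pieces in place the estimate is immediate: in $H_3 = K_3 + 3\sigma_a K_2 - 2\sigma_a^3$, the first term is $O(a\log a)$ directly, the middle term is $O(\log a)\cdot O(a)=O(a\log a)$, and the last term is $O((\log a)^3)$. Summing the three contributions gives $H_3=O(a\log a)$, hence
$$\frac{d^3}{dt^3}\log f(x_a)=O(a\log a)=O(x_a\log x_a).$$
Since $a\mapsto x_a$ is a monotone parametrisation of $(0,\infty)$ with $x_a\sim a$, this bound transfers to arbitrary $x$ in the form $\frac{d^3}{dt^3}\log f(x)=O(x\log x)$.

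There is no serious obstacle: all the analytical work (the Laplace-type mass-concentration estimates for $e^{-G_a(i)}$, the bound on $\sigma_a$, and the asymptotic for $H_2$) has been carried out in the preceding corollaries and in Theorem \ref{theo-log-sqrt}. The only step requiring a small computation is the identity $H_2=K_2-\sigma_a^2$, which is a one-line binomial expansion together with $K_1=\sigma_a$. Everything else is bookkeeping of orders of magnitude, and the worst term is the cross-term $\sigma_a K_2$, which produces exactly the $a\log a$ that appears in the statement.
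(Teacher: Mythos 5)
Your proof is correct and is exactly the argument the paper intends by its one-word "Similarly" after Proposition \ref{prop-lambda3}: you transport the decomposition $H_3=K_3\pm 3\sigma_a K_2\mp 2\sigma_a^3$ (the sign in the displayed formula of the paper is a harmless typo, irrelevant to the order estimate) to the discrete side, feed in the previously established bounds $K_3=O(a\log a)$, $\sigma_a=O(\log a)$, $H_2=O(a)$, and the identity $K_2=H_2+\sigma_a^2$ (from $K_1=\sigma_a$), and read off $H_3=O(a\log a)$. This is the same approach as the paper's, merely spelled out.
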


\section{Tightening up the loose ends}\label{sec-looseends}
One notice that in order to get better estimates than theorem \ref{theo-log-sqrt} we have to deal with two loose ends. The first is that on the interval $[a-\log a\sqrt{a},a+\log a\sqrt{a}]$, from $M_1<b\lambda''(b)<M_2$, we can only get $$M_1(1-1.1\frac{\log a}{\sqrt{a}})<a\lambda''(b)<M_2(1+1.1\frac{\log a}{\sqrt{a}}).$$
The second loose end lies in the error in estimating the summations over $a$ with integrals in estimating $\frac{d^2}{dt^2}\log f(x)$, etc. 
We deal with the second loose end first.
\subsection{the second loose end}

Now we show that the error in estimating the summations using integrals has a relative error smaller than $\frac{1}{\sqrt{a}}$.

Notice that we can extend the defintion of $c(i)=(\int_{0}^{\infty}\frac{x^i dx}{f(x)})^{-1}$ to $i>-1$ and it is easy to see that
 $$\lim_{i\to -1}c(i)=0.$$ Of course in this way, $c(0)\neq c_0$, but the contribution of this difference to our calculations will be negligible.

We define a function on $\R$
$$\zeta_a(i)=
\begin{cases}
\frac{c(i)}{c(a)}x_a^{i-a}& \textbf{if } i>-1\\
0& \textbf{otherwise}
\end{cases}
$$ 
Then it is clear that $\zeta_a$ is continuous, and as $i\to +\infty$ $\zeta_a$ has fast decay, hence integrable.

Let $$ \hat{g}(\xi)=\int_{-\infty}^{\infty}e^{-2i\pi \xi x}g(x)dx $$
denote the Fourier transform of a function $g(x)$. We recall the Poisson summation formula:
\begin{theorem}[\cite{Stein-FE}]
	Suppose $g(x)=\int_{\R} \hat{f}e^{2\pi ixy dy}$ with $|g(x)|\leq A(1+|x|)^{-1-\delta}$ and $|\hat{g}(y)|\leq A(1+|y|)^{-1-\delta}$, $\delta>0$. Then 
	$$\sum_{c=-\infty}^{\infty}g(x+c)=\sum_{\xi=-\infty}^{\infty}\hat{g}(\xi)e^{2\pi i\xi x}.$$
\end{theorem}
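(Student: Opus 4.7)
The plan is to prove this classical Poisson summation formula by the standard periodization argument. First I would form the 1-periodic function
$$F(x) = \sum_{c \in \Z} g(x+c),$$
and observe that the hypothesis $|g(x)| \leq A(1+|x|)^{-1-\delta}$ makes the sum converge absolutely and uniformly on compact subsets of $\R$, so $F$ is continuous and 1-periodic.

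Second, I would compute the Fourier coefficients of $F$ on $[0,1]$. For each integer $\xi$, interchanging sum and integral (justified by the integrable majorant from the decay of $g$) and shifting the integration variable gives
$$\int_0^1 F(x) e^{-2\pi i \xi x} dx = \sum_{c \in \Z} \int_0^1 g(x+c) e^{-2\pi i \xi x} dx = \sum_{c \in \Z} \int_c^{c+1} g(y) e^{-2\pi i \xi y} e^{2\pi i \xi c} dy,$$
and since $e^{2\pi i \xi c} = 1$ for integer $c$ and $\xi$, the right hand side collapses to $\int_{-\infty}^\infty g(y) e^{-2\pi i \xi y} dy = \hat{g}(\xi)$.

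Third, the hypothesis $|\hat g(\xi)| \leq A(1+|\xi|)^{-1-\delta}$ ensures that $\sum_\xi |\hat g(\xi)| < \infty$, so the Fourier series $\sum_\xi \hat g(\xi) e^{2\pi i \xi x}$ converges absolutely and uniformly to a continuous 1-periodic function. Because $F$ is continuous with Fourier coefficients $\hat g(\xi)$ that are absolutely summable, the inversion theorem for Fourier series on the circle yields
$$F(x) = \sum_{\xi \in \Z} \hat g(\xi) e^{2\pi i \xi x},$$
which is exactly the claimed identity. The only genuine point of care is the Fubini swap in the second step, which is routine given the pointwise decay bound; the representation hypothesis $g(x) = \int \hat g(y) e^{2\pi i x y} dy$ is not actually needed for the identity itself beyond ensuring $g$ is the inverse Fourier transform of $\hat g$, and both sides agree as continuous functions once the Fourier coefficients of $F$ are identified with $\hat g$.
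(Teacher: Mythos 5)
The paper does not prove this theorem; it simply cites it from Stein's \emph{Fourier Analysis} (reference \cite{Stein-FE}) and uses it as a black box. Your proposal supplies the standard periodization proof: periodize $g$ to form $F$, identify the Fourier coefficients of $F$ with $\hat g(\xi)$ by an absolutely convergent Fubini swap, then use the decay hypothesis on $\hat g$ to get absolute and uniform convergence of the Fourier series and appeal to Fourier inversion on the circle. This argument is correct and is in fact precisely the proof given in the cited source; the only minor remark is that you are right to note the representation hypothesis $g(x)=\int \hat g(y)e^{2\pi ixy}\,dy$ is used only to pin down that $g$ is continuous and equal everywhere (not merely almost everywhere) to the inverse transform of $\hat g$, which is needed for the pointwise identity.
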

Clearly we have $|\zeta_a(x)|\leq A(1+|x|)^{-1-\delta}$.

We need to show that $|\hat{g}_a(y)|\leq A(1+|y|)^{-1-\delta}$ for some $\delta>0$. Since
$$\int_{0}^{1}x^i dx=\frac{1}{1+i}, \quad \int_{0}^{1}x^i\log x dx=\frac{1}{(1+i)^2},$$
we can estimate the derivatives as $i\to -1$:
\begin{eqnarray*}
  c'(i)&=&(\int_{0}^{\infty}\frac{x^i\log x dx}{f(x)})/(\int_{0}^{\infty}\frac{x^i dx}{f(x)})^2\\
  &=& \frac{(1+i)^{-2}+O(1)}{((1+i)^{-1}+O(1))^2}\\
  &=& 1+O((1+i)^{-2})
\end{eqnarray*}
Similarly, since $$\int_{0}^{1}x^i(\log x)^2 dx=\frac{2}{(1+i)^3},$$
\begin{eqnarray*}
  c''(i)&=&(\int_{0}^{\infty}\frac{x^i dx}{f(x)})^{-3}[\int_{0}^{\infty}\frac{x^i dx}{f(x)}\int_{0}^{\infty}\frac{x^i(\log x)^2 dx}{f(x)}-2(\int_{0}^{\infty}\frac{x^i \log x dx}{f(x)})^2 ]\\
  &=&\frac{((1+i)^{-1}+O(1))(2(1+i)^{-3}+O(1))-((1+i)^{-2}+O(1))^2}{((1+i)^{-1}+O(1))^{3}} \\
  &=& O(1),
\end{eqnarray*}
as $i\to -1$. For the third derivative, we need a little more effort. We have $$\int_{0}^{\infty}\frac{x^i dx}{f(x)}=\frac{1}{1+i}+w(i),$$
for some function $w(i)$ continuous in a neighborhood of $-1$. We denote by $T=\frac{1}{1+i}$. 
We have 
$$\int_{0}^{1}x^i(\log x)^3 dx=\frac{6}{(1+i)^4}.$$
So \begin{eqnarray*}
  c^{(3)}(i)&=&(\int_{0}^{\infty}\frac{x^i dx}{f(x)})^{-4}[(\int_{0}^{\infty}\frac{x^i dx}{f(x)})^2\int_{0}^{\infty}\frac{x^i(\log x)^3 dx}{f(x)}-6\int_{0}^{\infty}\frac{x^i dx}{f(x)}\cdot\\
  &&\int_{0}^{\infty}\frac{x^i \log x dx}{f(x)}\int_{0}^{\infty}\frac{x^i(\log x)^2 dx}{f(x)}+6(\int_{0}^{\infty}\frac{x^i \log x dx}{f(x)})^3 ]\\
  &=&\frac{1}{(T+O(1))^{4}}[(T+w(i))^2(6T^4+O(1))-6(T+w(i))\cdot\\
  &&(T^2+O(1))(2T^3+O(1))+6(T^2+O(1))^3] \\
  &=& O(1),
\end{eqnarray*}
Therefore, we have $\exists C>0$ independent of $a$, such that 
\begin{equation}\label{e-ga3}
  |\zeta_a^{(3)}(i)|\leq \frac{C(\log x_a)^3}{c(a)x_a^{a}} ,
\end{equation}
for $-1<i\leq 0$.
Then we can prove the following:

\begin{prop}\label{prop-ghat}
  For $|y|\geq 1$
  $$|\hat{\zeta}_a(y)|\leq \frac{\epsilon(a)}{y^2}+\frac{1}{|y|^3}O(\frac{1}{a}),$$
  where the $\epsilon(a)$ term and the $O(\frac{1}{a})$ term can be made independent of $y$.
\end{prop}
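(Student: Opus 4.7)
My plan is to use repeated integration by parts on $\hat{\zeta}_a(y)=\int_{-1}^{\infty}e^{-2\pi i y x}\zeta_a(x)\,dx$, carefully tracking the boundary contributions at $x=-1$ where $\zeta_a$ is only $C^0$. Because $\lim_{i\to -1^+}c(i)=0$, the value $\zeta_a(-1^+)=0$ and the first integration by parts yields $\hat{\zeta}_a(y)=\frac{1}{2\pi i y}\int_{-1}^{\infty}e^{-2\pi i y x}\zeta_a'(x)\,dx$ with no boundary term. The next two integrations each produce a boundary contribution at $-1$, of the form $\frac{e^{2\pi i y}}{(2\pi i y)^2}\zeta_a'(-1^+)$ and $\frac{e^{2\pi i y}}{(2\pi i y)^3}\zeta_a''(-1^+)$, plus a remainder
$$\frac{1}{(2\pi i y)^3}\int_{-1}^{\infty}e^{-2\pi i y x}\zeta_a'''(x)\,dx.$$

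I would then show that both boundary contributions are $\epsilon(a)/y^2$. Differentiating $\zeta_a(i)=\frac{c(i)}{c(a)}x_a^{i-a}$ directly and plugging in $c(-1^+)=0$, $c'(-1^+)=1$ and $c''(-1^+)=O(1)$ (all derived above in the text) yields $|\zeta_a^{(k)}(-1^+)|=O((\log x_a)^k)\cdot\frac{1}{c(a)x_a^{a+1}}$ for $k=1,2$. The key observation is that $c(a)x_a^{a}=f(x_a)/h_a(x_a)$, and theorem \ref{theo-log-sqrt} gives $h_a(x_a)\sim\sqrt{2\pi x_a}$, while integrating the bound $\frac{d^2}{dt^2}\log f(x)=x(1+O(\log x/\sqrt x))$ forces $f(x_a)\geq e^{x_a/2}$ for large $a$. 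Hence $\frac{1}{c(a)x_a^{a+1}}=\epsilon(a)$, so both boundary contributions fit into $\epsilon(a)/y^2$.

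For the remainder integral I would split $[-1,\infty)=[-1,0]\cup[0,\infty)$. On $[-1,0]$ the estimate \eqref{e-ga3} combined with the same super-polynomial smallness of $1/(c(a)x_a^a)$ produces an $\epsilon(a)/|y|^3$ contribution that I would fold into the first term of the bound. On $[0,\infty)$ I would use $\zeta_a(i)=\Delta_a e^{-G_a(i)}$ and expand
$$\zeta_a'''(i)=\Delta_a e^{-G_a(i)}\bigl[-(G_a'(i))^3+3G_a'(i)G_a''(i)-G_a'''(i)\bigr].$$
Theorem \ref{theo-log-sqrt} yields $G_a''(i)=\lambda''(i)=\frac{1}{x_i}(1+O(\log i/\sqrt i))$, proposition \ref{prop-lambda3} yields $G_a'''(i)=O(\log i/i^2)$, and on the bulk of $e^{-G_a}$ one has $G_a'(i)=O((i-\tilde a)/x_a)$. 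A Gaussian rescaling $u=(i-\tilde a)/\sqrt{x_a}$ then turns each of the three summands into a finite Gaussian moment integral of size $O(1/x_a)=O(1/a)$, producing $\int_0^\infty|\zeta_a'''|\,di=O(1/a)$ and hence the desired $O(1/a)/|y|^3$ bound with the constant independent of $y$.

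The main obstacle is to make the pointwise bounds on $G_a',G_a'',G_a'''$ effective across the entire line and not just on the window $|i-\tilde a|\lesssim \sqrt{x_a}\log x_a$ where theorem \ref{theo-log-sqrt} and proposition \ref{prop-lambda3} apply most cleanly. Outside this window one must rely on the uniform lower bound $\lambda''\gtrsim 1/x_i$ to produce Gaussian decay of $e^{-G_a}$ strong enough to dominate any polynomial-size growth of $|G_a'|^3$ or $|G_a'\,G_a''|$. The bookkeeping is routine once those estimates are in place, but some care is needed to ensure the final $O(1/a)$ constant is genuinely uniform in $y\geq 1$.
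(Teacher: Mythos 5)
Your proof takes essentially the same route as the paper: three integrations by parts, showing the two boundary contributions at $i=-1^{+}$ are $\epsilon(a)$ (using $c(-1^{+})=0$, $c'(-1^{+})=1$, $c''(-1^{+})=O(1)$ together with $\bigl(c(a)x_a^{a}\bigr)^{-1}=h_a(x_a)/f(x_a)=\epsilon(a)$), and then controlling the remaining integral of $\zeta_a^{(3)}$. The one structural difference is where you split that last integral: you cut at $i=0$, whereas the paper cuts at $i=a/2$. This matters, because on $[0,a/2]$ the asymptotics you invoke from Theorem \ref{theo-log-sqrt} and Proposition \ref{prop-lambda3} ($\lambda''(i)\sim 1/x_i$, $\lambda'''(i)=O(\log i/i^{2})$) are only valid for $i$ in the large-$i$ regime, not uniformly down to $i=0$; this is exactly the "main obstacle" you identify but do not fully resolve. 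The paper sidesteps it cleanly by showing directly that on $[0,a/2]$ one has $|c'''(i)/c(i)|\leq (\log x_a)^{3}$ (from the concentration $\int_{x_a}^{\infty}x^i/f=\epsilon(a)\int_0^{\infty}x^i/f$ uniformly in $i\leq a/2$), hence $|\zeta_a^{(3)}(i)|\leq C\,\zeta_a(i)(\log x_a)^{3}$, and then the whole contribution from $[-1,a/2]$ is $\epsilon(a)$ because $\zeta_a$ carries negligible mass there. Your suggested fix — relying only on a lower bound $\lambda''\gtrsim 1/x_i$ for the Gaussian decay — supplies the smallness of $e^{-G_a}$ but not, by itself, the uniform polynomial-size upper bound on $|\zeta_a^{(3)}|/\zeta_a$ that closes the estimate; you would also need to observe, e.g., that $|G_a'(i)|\leq\log x_a-\lambda'(0)=O(\log a)$ and that $\lambda''$, $\lambda'''$ stay $O(1)$ on the fixed-$i$ range. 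With that addition the two proofs coincide in substance; the paper's choice of cut point simply avoids having to track these low-$i$ bounds explicitly.
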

\begin{proof}
  By integration by parts, 
  \begin{eqnarray*}
    \int_{-\infty}^{\infty}e^{-2i\pi y z}\zeta_a(z)dz&=&\frac{i}{2\pi y}\int_{-1}^{\infty}e^{-2i\pi y z}\zeta'_a(z)dz\\
    &=&(\frac{i}{2\pi y})^2e^{2i\pi y}\zeta'_a(-1)-(\frac{i}{2\pi y})^2\int_{-1}^{\infty}e^{-2i\pi y z}\zeta''_a(z)dz,
  \end{eqnarray*}
  where by $\zeta'_a(-1)$ we mean $\lim_{i\to -1^+}\zeta'_a(i)$.
  By the previous calculations, we have 
  $$|(\frac{i}{2\pi y})^2e^{2i\pi y}\zeta'_a(-1)|\leq C_1\frac{x_a^{-1-a}}{y^2c_a},$$
  for some $C_1$ independent of $a$ and $y$. By integration by parts again
  $$ \int_{-1}^{\infty}e^{-2i\pi y z}\zeta''_a(z)dz=\frac{i}{2\pi y}e^{2i\pi y}\zeta''_a(-1)-\frac{i}{2\pi y}\int_{-1}^{\infty}e^{-2i\pi y z}\zeta^{(3)}_a(z)dz.$$
  We have $$|(\frac{i}{2\pi y})^3e^{2i\pi y}\zeta''_a(-1)|\leq C_2\frac{x_a^{-1-a}\log x_a}{|y|^3c_a},$$
  for some $C_2$ independent of $a$ and $y$. To estimate the integral 
  $\int_{-1}^{\infty}e^{-2i\pi y z}\zeta^{(3)}_a(z)dz$, we divide the integration interval into $[-1,\frac{a}{2}]$ and $[\frac{a}{2},\infty)$. For $0\leq i\leq \frac{a}{2}$, since 
  $$\int_{x_a}^{\infty}\frac{x^i dx}{f(x)}=\epsilon(a)\int_{0}^{\infty}\frac{x^i dx}{f(x)},$$
  and $$\int_{x_a}^{\infty}\frac{x^i(\log x)^3 dx}{f(x)}=\epsilon(a)\int_{0}^{\infty}\frac{x^i dx}{f(x)},$$ where both the $\epsilon(a)$ terms can be made independent of $i$, 
  we have $$|\frac{1}{c(i)}\frac{d^3}{di^3}c(i)|\leq (\log x_a)^3.$$
 Therefore $$|\frac{d^3}{di^3}c(i)|\leq C_3c(i)(\log x_a)^3,$$
  for some $C_3>0$ independent of $i$ and $a$. 
  So $$|\frac{d^3}{di^3}\zeta_a(i)|\leq C_4\zeta_a(i)(\log x_a)^3,$$
  for some $C_4>0$ independent of $i$ and $a$.
  Therefore $$\int_{-1}^{\frac{a}{2}}e^{-2i\pi y z}\zeta^{(3)}_a(z)dz=\epsilon(a).$$
Write $\zeta_a(i)=e^{\mu(i)}$. Then since $\zeta'_a(n_{x_a})=0$ and $\mu''(i)=-\lambda''(i)=O(\frac{1}{a})$ for $i>\frac{a}{2}$, we have 
$$\mu'(i)=O(\frac{1}{a})(i-n_{x_a}).$$ 
By proposition \ref{prop-lambda3}, we have $\mu^{(3)}(i)=O(\frac{\log a}{a^2})$ for $i>\frac{a}{2}$.
Therefore
\begin{eqnarray*}
  \int_{\frac{a}{2}}^{\infty}e^{-2i\pi y z}\zeta^{(3)}_a(z)dz&=&\int_{\frac{a}{2}}^{\infty}e^{-2i\pi y z}[\mu^{(3)}(z)+3\mu^{(2)}(z)\mu'(z)+(\mu'(z))^3]\zeta_a(z)dz\\
  &=&\int_{\frac{a}{2}}^{\infty}[O(\frac{\log a}{a^2})+O(\frac{1}{a^2})|z-n_{x_a}|+O(\frac{1}{a^3})|z-n_{x_a}|^3]\zeta_a(z)dz\\
  &=&O(\frac{1}{a})
\end{eqnarray*}
So we have proved the lemma.
\end{proof}
As a direct corollary, 
$|\hat{g}_a(y)|\leq A(1+|y|)^{-1-\delta}$ for some $\delta>0$. And since $\zeta_a$ is continuous, $\zeta_a(x)=\int_{-\infty}^{\infty}\hat{g}_a(y) e^{2\pi i xy dy}$. So we can use the Poisson summation formula to get the following:
\begin{lem}\label{lem-sum-integral}
  We have
  $$\sum_{i=0}^\infty \zeta_a(i)=(1+O(\frac{1}{a^{3/2}}))\int_{i=0}^\infty \zeta_a(i)di, $$
  for $a$ large enough, 
\end{lem}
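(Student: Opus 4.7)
The plan is to apply the Poisson summation formula, whose hypotheses the author has just verified, and then to split and control the resulting Fourier series. First I would observe that $\zeta_a$ has been defined so that it vanishes identically on $(-\infty,-1)$ and, by continuity together with $\lim_{i\to -1^+}c(i)=0$, also vanishes at $i=-1$. Consequently $\zeta_a(i)=0$ for every integer $i\le -1$ and
\[
\sum_{i=0}^\infty \zeta_a(i)\;=\;\sum_{i\in\mathbb Z}\zeta_a(i).
\]
Proposition \ref{prop-ghat} together with the explicit decay of $\zeta_a$ itself supplies the $(1+|x|)^{-1-\delta}$ bounds on both sides that the Poisson summation formula requires, so
\[
\sum_{i=0}^\infty \zeta_a(i)\;=\;\hat\zeta_a(0)+\sum_{\xi\ne 0}\hat\zeta_a(\xi).
\]

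Next I would estimate the nonzero Fourier modes. By Proposition \ref{prop-ghat} with the uniform $\epsilon(a)$ and $O(1/a)$ controls,
\[
\Bigl|\sum_{\xi\ne 0}\hat\zeta_a(\xi)\Bigr|\;\le\;2\sum_{\xi=1}^\infty\Bigl[\frac{\epsilon(a)}{\xi^{2}}+\frac{O(1/a)}{\xi^{3}}\Bigr]\;=\;O\!\left(\tfrac{1}{a}\right).
\]
For the zero mode, $\hat\zeta_a(0)=\int_{-1}^{\infty}\zeta_a(i)\,di=\int_{0}^{\infty}\zeta_a(i)\,di+\int_{-1}^{0}\zeta_a(i)\,di$.

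I would then argue that the auxiliary piece $\int_{-1}^{0}\zeta_a(i)\,di$ is $\epsilon(a)$. On $[-1,0]$ we have $\zeta_a(i)\le \frac{c(0)}{c(a)x_a^{a}}$, so it suffices to show $1/(c(a)x_a^{a})$ decays faster than any polynomial in $a$. But $c(a)x_a^{a}=f(x_a)/h_a(x_a)$, and from theorem \ref{theo-log-sqrt} $h_a(x_a)\sim\sqrt{2\pi x_a}$, while the power series for $f$ gives $f(x_a)\ge c_n x_a^{n}$ for every fixed $n$; since $x_a\to\infty$ with $a$, we conclude $c(a)x_a^{a}$ grows faster than any polynomial, and hence $\int_{-1}^{0}\zeta_a=\epsilon(a)$.

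Combining the three ingredients,
\[
\sum_{i=0}^\infty \zeta_a(i)\;=\;\int_{0}^{\infty}\zeta_a(i)\,di\;+\;O\!\left(\tfrac{1}{a}\right).
\]
To convert this absolute error into the desired relative error I would invoke the concentration estimates recalled earlier: the mass of $\zeta_a$ is concentrated near $i=n_{x_a}$, where $\zeta_a$ attains $\Delta_a=1+o(1)$, and the Gaussian width is $\sim\sqrt{x_a}$, so $\int_0^\infty \zeta_a(i)\,di$ is of order $\sqrt{x_a}\sim\sqrt{a}$. Dividing the $O(1/a)$ error by this denominator yields the claimed $O(1/a^{3/2})$ relative error. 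The main obstacle is really just bookkeeping: ensuring that the absolute error coming from Poisson summation is genuinely $O(1/a)$ (so that all the constants in proposition \ref{prop-ghat} are under control uniformly in $\xi$), and that the boundary integral on $[-1,0]$ is harmless, which is where the entire-function nature of $f$ is invoked.
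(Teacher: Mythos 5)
Your argument is correct and follows the same route the paper takes: apply Poisson summation, identify the zero mode with the integral, bound the nonzero modes by $O(1/a)$ using Proposition \ref{prop-ghat}, and then divide by the $\sim\sqrt{2\pi x_a}$ size of $\int_0^\infty\zeta_a$ to convert the absolute error into the claimed $O(a^{-3/2})$ relative error. You simply spell out two steps the paper leaves implicit — that $\zeta_a$ vanishes at integers $\le -1$ so the one-sided sum equals the two-sided sum, and that the boundary contribution $\int_{-1}^0\zeta_a\,di$ is $\epsilon(a)$ because $c(a)x_a^a=f(x_a)/h_a(x_a)$ grows super-polynomially — both of which are sound.
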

\begin{proof}
Since $$\int_{i=0}^\infty \zeta_a(i)di=\sqrt{2\pi x_{a+1}}(1+O(\frac{1}{\sqrt{a}})),$$
  the lemma follows directly from the Poisson summation formula and proposition \ref{prop-ghat}.

\end{proof}
\begin{rem}
  \begin{itemize}
    \item In the same way, it is not hard to prove: $$\sum_{i=0}^\infty (i-n_{x_a})^j \zeta_a(i)=(1+O(\frac{1}{a^{3/2}}))\int_{i=0}^\infty (i-n_{x_a})^j \zeta_a(i)di, $$ for $a$ large enough, for $j=2$.
    \item When $j$ is odd, it is harder to get a relative error than the absolute error instead. When $j=1$, we have
     $$\sum_{i=0}^\infty (i-n_{x_a}) \zeta_a(i)=\int_{i=0}^\infty (i-n_{x_a}) \zeta_a(i)di+O(\frac{1}{a^{1/2}}) $$ 
     And when $j=3$, we have
      $$\sum_{i=0}^\infty (i-n_{x_a})^3 \zeta_a(i)=\int_{i=0}^\infty (i-n_{x_a})^3 \zeta_a(i)di+O(a^{1/2}) $$ 
  \end{itemize}
  
\end{rem}
\subsection{the first loose end}\label{subsec-fls}
We define two convex functions $\phi_a(t)$ and $\psi_a(t)$ on the interval $P_a=[x_{a+1}-\sqrt{a}\log a,x_{a+1}+\sqrt{a}\log a]$ that satisfies the following conditions 
$$\begin{cases}
  \phi_a(t_{a+1})=\phi'_a(t_{a+1})=0;\\
  \phi_a''(t)=e^t,
\end{cases}$$
and $$\begin{cases}
  \psi_a(t_{a+1})=\psi'_a(t_{a+1})=0;\\
  \psi_a''(t)=x_{a+1}.
\end{cases}$$
Then let $\Delta g=g_a-\phi_a$, we then have $$\Delta g(t)=O(\sqrt{a}\log a (t-t_{a+1})^2)=O(\frac{(\log a)^3}{\sqrt{a}}).$$
Therefore \begin{eqnarray*}
  e^{-g(t)}&=&e^{-\phi_a-\Delta g}\\
  &=&e^{-\phi_a}(1-\Delta g+O((\Delta g)^2) )
\end{eqnarray*}
We then let $\tilde{g}=\psi_a+\Delta g$ and get $$e^{-\tilde{g}(t)}=e^{-\psi_a}(1-\Delta g+O((\Delta g)^2) ).$$
Then we have
\begin{eqnarray*}
  \int (t-\bar{t}(g_a))^2e^{-g_a(t)}dt&=& (1+O(\frac{(\log a)^2}{a}))\int \tau^2 e^{-g_a(t)}dt\\
  &=& (1+O(\frac{(\log a)^2}{a}))\int \tau^2 e^{-\phi_a(t)}(1-\Delta g+O((\Delta g)^2) )dt\\
  &=& (1+O(\frac{(\log a)^6}{a}))\int \tau^2 e^{-\phi_a(t)}(1-\Delta g)dt.
\end{eqnarray*}
Similarly
\begin{eqnarray*}
  \int (t-\bar{t}(\tilde{g}_a))^2e^{-\tilde{g}_a}dt&=& (1+O(\frac{(\log a)^2}{a}))\int \tau^2 e^{-\tilde{g}_a}dt\\
  &=& (1+O(\frac{(\log a)^2}{a}))\int \tau^2 e^{-\psi_a(t)}(1-\Delta g+O((\Delta g)^2) )dt\\
  &=& (1+O(\frac{(\log a)^6}{a}))\int \tau^2 e^{-\psi_a(t)}(1-\Delta g)dt.
\end{eqnarray*}
Since $$\int e^{-\phi_a}d\tau=(1+O(\frac{1}{a}))\int e^{-\psi_a}d\tau, $$
$$\int \tau^2 e^{-\phi_a}d\tau=(1+O(\frac{1}{a}))\int \tau^2e^{-\psi_a}d\tau, $$
and \begin{eqnarray*}
  \int \tau^2 e^{-\phi_a(t)}\Delta_g-\int \tau^2 e^{-\psi_a(t)}\Delta_g&=&\int \tau^2 (e^{-\phi_a(t)}-e^{-\psi_a(t)})\Delta_g\\
  &=&\int \tau^2e^{-\frac{\tau^2}{2x_{a+1}}}O(a\tau^3)O(\sqrt{a}\log a \tau^2)d\tau\\
  &=&O(\frac{\log a}{a^{5/2}}),
\end{eqnarray*}
we get 
\begin{equation}
  \int (t-\bar{t}(g_a))^2e^{-g_a(t)}dt=(1+O(\frac{(\log a)^6}{a}))\int_{P_a} (t-\bar{t}(\tilde{g}_a))^2e^{-\tilde{g}_a}dt.
\end{equation}
And similarly \begin{equation}\label{e-g-0}
  \int e^{-g_a(t)}dt=(1+O(\frac{(\log a)^6}{a}))\int_{P_a} e^{-\tilde{g}_a}dt.
\end{equation}
Therefore we get \begin{equation}\label{e-g-2}
  \tilde{d}(g_a)=(1+O(\frac{(\log a)^6}{a})) \tilde{d}(\tilde{g}_a).
\end{equation}
And the following proposition.
\begin{prop}\label{prop-lambda2-2}
  We have $$\lambda''(a)=(1+O(\frac{(\log a)^6}{a}))\gamma(a),$$
  where $$\gamma(a)=\frac{\int_{P_a} (t-\bar{t}(\tilde{g}_a))^2e^{-\tilde{g}_a}dt}{\int_{P_a} e^{-\tilde{g}_a}dt}.$$
\end{prop}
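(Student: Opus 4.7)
The plan is to recognize $\lambda''(a)$ as the variance of $\log x$ against the probability measure $\tfrac{dx}{h_a(x)}$ on $[0,\infty)$, and then push that variance over to the $t$-line via the change of variables $t=\log x$. A direct calculation from the definitions gives $\tfrac{dx}{h_a(x)}=\eta(a)\,e^{-g_a(t)}\,dt$ for some positive normalization $\eta(a)$, and the total-mass relation $\int_0^\infty \tfrac{dx}{h_a(x)}=1$ forces $\eta(a)=1/A(g_a)$. The constant $\tilde t_a$ appearing in \eqref{e-lam2} is by construction the mean of $\log x$ against this probability measure, so $\tilde t_a=\bar t(g_a)$ and \eqref{e-lam2} rewrites as $\lambda''(a)=B(g_a)/A(g_a)$.

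Next I would feed in the two estimates already established in this subsection: the denominator equivalence \eqref{e-g-0}, and the numerator equivalence
\[
\int_{\R}(t-\bar t(g_a))^2 e^{-g_a}\,dt \;=\; \bigl(1+O(\tfrac{(\log a)^6}{a})\bigr)\int_{P_a}(t-\bar t(\tilde g_a))^2 e^{-\tilde g_a}\,dt
\]
derived just above \eqref{e-g-0}. The tail contributions outside $P_a$ are absorbed into these $O((\log a)^6/a)$ errors using the mass-concentration statement of remark \ref{rmk-1} (which is valid for both $g_a$ and $\tilde g_a$, since $\tilde g_a''$ is uniformly comparable to $g_a''$ on $P_a$ and grows at least linearly outside a bounded window). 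Taking the ratio of numerator and denominator and multiplying out the two independent $1+O(\tfrac{(\log a)^6}{a})$ factors — which does not degrade the order — yields the claimed $\lambda''(a)=(1+O(\tfrac{(\log a)^6}{a}))\gamma(a)$.

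The substantive analytic work has already been done in the $g_a\leadsto\tilde g_a=\psi_a+\Delta g$ replacement, together with the bound $\Delta g(t)=O(\sqrt a\,\log a\,(t-t_{a+1})^2)$ used to expand $e^{-\Delta g}$ inside the integrals; the present step is therefore mostly bookkeeping. The only mild subtlety is to confirm that the centroid $\bar t(\tilde g_a)$ that appears in the definition of $\gamma(a)$ is indeed the correct center to pair with the measure $e^{-\tilde g_a}dt$ in the numerator, and this is precisely how the numerator equivalence above was set up, so no new ingredient is required.
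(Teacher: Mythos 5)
Your proposal is correct and follows the same route as the paper. The identity $\lambda''(a)=B(g_a)/A(g_a)$ (with $\tilde t_a=\lambda'(a)=\bar t(g_a)$) via the change of variable $t=\log x$ is exactly what underlies formula \eqref{e-lam2}, and the proposition is then simply the quotient of the two $\,(1+O((\log a)^6/a))$-equivalences established immediately above \eqref{e-g-0} and in \eqref{e-g-0} itself, with the tails outside $P_a$ contributing only $\epsilon(a)$ by remark \ref{rmk-1}. This is precisely how the paper derives \eqref{e-g-2} and the proposition; you have just made the implicit step $\lambda''(a)=B(g_a)/A(g_a)$ explicit.
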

And the advantage of replacing $g_a$ with $\tilde{g}_a$ is that when $$M_1\leq \frac{g_a''(x)}{x}\leq M_2,$$ with $M_1=1-O(\frac{\log a}{\sqrt{a}})$ and $M_2=1+O(\frac{\log a}{\sqrt{a}})$, for $x\in [x_{a+1}-\sqrt{a}\log a,x_{a+1}+\sqrt{a}\log a] $, we have 
$$(M_1-1)(x_{a+1}+\sqrt{a}\log a)\leq (\Delta g)''(x)\leq (M_2-1)(x_{a+1}+\sqrt{a}\log a).$$
Therefore $$x_{a+1}+(M_1-1)(x_{a+1}+\sqrt{a}\log a)\leq \tilde{g}''(x)\leq x_{a+1}+(M_2-1)(x_{a+1}+\sqrt{a}\log a).$$
And the ratio of the upper and the lower bounds is 
\begin{eqnarray*}
  \frac{x_{a+1}+(M_2-1)(x_{a+1}+\sqrt{a}\log a)}{x_{a+1}+(M_1-1)(x_{a+1}+\sqrt{a}\log a)}&=&\frac{1+(M_2-1)(1+\frac{\log a \sqrt{a}}{x_{a+1}})}{1+(M_1-1)(1+\frac{\log a \sqrt{a}}{x_{a+1}})}\\
  &=&\frac{M_2}{M_1}(1+O(\frac{(\log a)^2}{a})),
\end{eqnarray*}
namely $\exists C_0>0$ so that 
\begin{equation}\label{e-use-et}
  \frac{x_{a+1}+(M_2-1)(x_{a+1}+\sqrt{a}\log a)}{x_{a+1}+(M_1-1)(x_{a+1}+\sqrt{a}\log a)}<\frac{M_2}{M_1}(1+C_0\frac{(\log a)^2}{a}).
\end{equation}

For the integrals with respect to $e^{-G_a(i)}di$ on the interval $Q_a=[a-\sqrt{a}\log a,a+\sqrt{a}\log a]$, we can do similar things. We define  
two convex functions $\Phi_a(t)$ and $\Psi_a(t)$ that satisfies the following conditions 
$$\begin{cases}
  \Phi_a(n_{x_a})=\phi'_a(n_{x_a})=0;\\
  \Phi_a''(i)=\frac{1}{x_i},
\end{cases}$$
and $$\begin{cases}
  \Psi_a(n_{x_a})=\psi'_a(n_{x_a})=0;\\
  \Psi_a''(i)=\frac{1}{x_{a}}.
\end{cases}$$
Then let $\Delta G=G_a-\Phi_a$, we have $$\Delta G=O(\frac{\log a}{a^{3/2}}j^2),$$where $j=i-n_{x_a}$, and 
$$\frac{M_1-1}{x_a-2\sqrt{a}\log a}\leq (\Delta G)''(i)\leq \frac{M_2-1}{x_a-2\sqrt{a}\log a},$$
for $i\in Q_a$.

We need to estimate $\int e^{-\Phi_a(i)}di$ and $\int j^2e^{-\Phi_a(i)}di$ .
$$\frac{da}{dx_a}=\frac{1}{x_a}\frac{d^2}{dt^2}\log f(x_a),$$
So $$\frac{dx_a}{da}=\frac{x_a}{\frac{d^2}{dt^2}\log f(x_a)}=1+O(\frac{\log a}{\sqrt{a}}), $$
and 
$$\frac{dx^{-1}_a}{da}=-\frac{\frac{dx_a}{da}}{x_a^2}=-\frac{1+O(\frac{\log a}{\sqrt{a}})}{x_a^2}.$$
Moreover, \begin{eqnarray*}
  \frac{d^2x_a}{da^2}&=& \frac{dx_a}{da}(\frac{d^2}{dt^2}\log f(x_a))^{-1}-(\frac{d^2}{dt^2}\log f(x_a))^{-2}x_a \frac{d^3}{dt^3}\log f(x_a) \frac{dt_a}{da}\\
  &=&O(\frac{\log a}{a})
\end{eqnarray*}
Therefore \begin{eqnarray*}
  \frac{d^2x^{-1}_a}{da^2}&=&-\frac{\frac{d^2x_a}{da^2}}{x_a^2}+2\frac{(\frac{dx_a}{da})^2}{x_a^3}\\
  &=&O(\frac{\log a}{a^3})
\end{eqnarray*}
We denote by $\tilde{a}=n_{x_a}$, $D_3=\frac{dx^{-1}_a}{da}|_{a=\tilde{a}}$, then
$$\Phi_a(i)=\frac{1}{2x_{\tilde{a}}}j^2+\frac{D_3}{6}j^3+O(\frac{\log a}{a^3}j^4), $$and 
\begin{equation}\label{e-Phi-a}
  e^{-\Phi_a(i)}=e^{-\frac{1}{2x_{\tilde{a}}}j^2}[1-\frac{D_3}{6}j^3+O(\frac{(\log a)^8}{a})] 
\end{equation}

 So \begin{eqnarray*}
  \int_{Q_a} e^{-\Phi_a(i)}di&=& \int_{Q_a}e^{-\frac{1}{2x_{\tilde{a}}}j^2}[1-\frac{D_3}{6}j^3+O(\frac{(\log a)^8}{a})]di\\
  &=&(1+(\frac{(\log a)^8}{a}))\sqrt{2\pi x_{\tilde{a}}}\\
  &=&(1+(\frac{(\log a)^8}{a}))\sqrt{2\pi x_a},
 \end{eqnarray*}
where the last equation is because $x_{\tilde{a}}-x_a=O(\log a)$.
Similarly $$\int_{Q_a} j^2e^{-\Phi_a(i)}di=(1+O(\frac{(\log a)^8}{a}))x_a \sqrt{2\pi x_a}. $$
On the other hand, it is easy to get that 
$$\int_{Q_a} e^{-\Psi_a(i)}di=(1+\epsilon(a))\sqrt{2\pi x_a}, $$
and $$\int_{Q_a} j^2e^{-\Psi_a(i)}di=(1+\epsilon(a))x_a\sqrt{2\pi x_a}. $$

Since $a-n_{x_a}=O(\log a)$, we also have \begin{eqnarray*}
  \int_{Q_a} (e^{-\Psi_a(i)}-e^{-\Phi_a(i)})\Delta G di&=&\int_{Q_a} e^{-\frac{1}{2x_{\tilde{a}}}j^2}O(\frac{\log a}{a^2} j^2+\frac{1}{a^2} j^3)O(\frac{\log a}{a^{3/2}} j^2)\\
  &=&O(\frac{\log a}{\sqrt{a}}),
\end{eqnarray*}
and similarly $$\int_{Q_a} j^2(e^{-\Psi_a(i)}-e^{-\Phi_a(i)})\Delta G di=O(\log a\sqrt{a}).$$
Therefore, let $\tilde{G}_a(i)=G_a(i)-\Phi_a(i)+\Psi_a(i)$, we have
\begin{equation}\label{e-G-0}
  \int_{Q_a} e^{-G_a(i)}di=(1+O(\frac{(\log a)^8}{a}))\int_{Q_a} e^{-\tilde{G}_a(i)}di, 
\end{equation}
and 
\begin{equation}\label{e-G-2}
  \int_{Q_a} (i-\bar{t}(G_a))^2e^{-G_a(i)}di=(1+O(\frac{(\log a)^8}{a}))\int_{Q_a}(i-\bar{t}(\tilde{G}_a))^2 e^{-\tilde{G}_a(i)}di.
\end{equation}
So we have the following proposition.
\begin{prop}\label{prop-logf2-2}
  We have $$\frac{d^2}{dt^2}\log f(x_a)=(1+O(\frac{(\log a)^8}{a}))\Gamma(a),$$
  where $$\Gamma(a)=\frac{\int_{Q_a}(i-\bar{t}(\tilde{G}_a))^2 e^{-\tilde{G}_a(i)}di}{\int_{Q_a} e^{-\tilde{G}_a(i)}di}.$$
\end{prop}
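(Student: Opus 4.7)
My plan is to follow the blueprint of Proposition \ref{prop-lambda2-2} while adding one extra step: converting the discrete sums that appear in $\frac{d^2}{dt^2}\log f(x_a)$ into continuous integrals over $Q_a$ via the Poisson summation machinery just established in Section \ref{sec-looseends}. The identity (\ref{e-f2}) together with $\sum_i\tau_{x_a}(i)=1$ gives
\begin{equation*}
\frac{d^2}{dt^2}\log f(x_a)=\frac{\sum_{i\geq 0}(i-a)^2e^{-G_a(i)}}{\sum_{i\geq 0}e^{-G_a(i)}},
\end{equation*}
and since $\sum_i i\,\tau_{x_a}(i)=a$ this ratio is exactly the discrete variance of $i$. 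By Remark \ref{rmk-1}, I may truncate both sums to $Q_a$ with $\epsilon(a)$ error.

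Next I would use Lemma \ref{lem-sum-integral} and the subsequent remark to pass from sums to integrals over $Q_a$. The denominator is handled directly by the $j=0$ bound. For the numerator I would expand $(i-a)^2=(i-n_{x_a})^2-2(a-n_{x_a})(i-n_{x_a})+(a-n_{x_a})^2$: the leading piece is converted by the $j=2$ remark with relative error $O(a^{-3/2})$; the middle piece, bounded by the $j=1$ absolute error $O(a^{-1/2})$ and multiplied by $|a-n_{x_a}|=O(\log a)$, contributes relative error $O(\log a\cdot a^{-2})$ after division by the variance-sized quantity $\int_{Q_a}(i-a)^2e^{-G_a}di\sim a\sqrt{a}$; and the constant piece $(a-n_{x_a})^2=O((\log a)^2)$ converts cleanly but shifts the ratio by at most $O((\log a)^2/a)$ relative to the variance of order $a$. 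All three error sources lie comfortably within the target $O((\log a)^8/a)$.

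To finish I would swap the basepoint $a$ for the continuous mean $\bar{t}(G_a)$: combining $\sum(i-a)e^{-G_a(i)}=0$ with the $j=1$ Poisson bound yields $\bar{t}(G_a)-a=O(1/a)$, and the identity
\begin{equation*}
\int_{Q_a}(i-a)^2e^{-G_a(i)}di=\int_{Q_a}(i-\bar{t}(G_a))^2e^{-G_a(i)}di+(\bar{t}(G_a)-a)^2\int_{Q_a}e^{-G_a(i)}di
\end{equation*}
forces this replacement to cost only $O(1/a^2)$ in the ratio. The already-established equations (\ref{e-G-0}) and (\ref{e-G-2}) then substitute $\tilde{G}_a$ for $G_a$ in numerator and denominator with the advertised $(1+O((\log a)^8/a))$ factor, producing $\Gamma(a)$. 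The main technical obstacle is converting the \emph{absolute} Poisson errors for odd-power moments into \emph{relative} errors on the variance; this relies crucially on the sharp $O(\log a)$ control of $a-n_{x_a}$ established in Section \ref{sec-3}, which is precisely why the first-level narrowing had to be completed before this refinement could be attempted.
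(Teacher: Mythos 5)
Your proposal is correct and follows the same route as the paper: Proposition \ref{prop-logf2-2} is stated immediately after formulas \ref{e-G-0} and \ref{e-G-2}, with the Poisson-summation conversion of the discrete moments $\sum_i(i-a)^j e^{-G_a(i)}$ to the integrals over $Q_a$ left implicit, and you simply make that implicit step explicit using Lemma \ref{lem-sum-integral} and its odd-degree remark. Your accounting of the odd-moment absolute errors and of the shift from the discrete center $a$ to the continuous mean $\bar{t}(G_a)$ (costing $O(1/a^2)$ in the ratio) is consistent with the $O((\log a)^{8}/a)$ target, so the argument closes.
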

We also have 
$$x^{-1}_{a}+ \frac{M_1-1}{x_a-2\sqrt{a}\log a}\leq \tilde{G}_a''\leq x^{-1}_{a}+ \frac{M_2-1}{x_a-2\sqrt{a}\log a},$$
and that $\exists C_1>0$ so that the ratio of the upper and lower bounds satisfies
\begin{equation}\label{e-use-xi}
  \frac{x^{-1}_{a}+(M_2-1)(x_a-2\sqrt{a}\log a)^{-1}}{x^{-1}_{a}+(M_1-1)(x_a-2\sqrt{a}\log a)^{-1}}<\frac{M_2}{M_1}(1+C_1\frac{(\log a)^2}{a}).
\end{equation}
\section{Thinner gap}\label{sec-5}
So now we can go over again the arguments in section \ref{sec-3} and get finer estimates as follows.
Let $C_2=\max\{C_0,C_1\}$, where $C_0,C_1$ are constants from formulas \ref{e-use-et} and \ref{e-use-xi}.
We now let $m(a)=\frac{M_2(a)}{M_1(a)}(1+C_2\frac{(\log a)^2}{a})$.

\subsection*{non-monotonic case} 

At a local minimum point $b$, we get 
$$\lambda''(b)-\frac{1}{\frac{d^2}{dt^2}\log f(x_b)}+\frac{\frac{d^3}{dt^3}\log f(x_b)}{2(\frac{d^2}{dt^2}\log f(x_b))^3}\geq 0,$$
which, by proposition \ref{prop-f3}, implies that $$\lambda''(a)\frac{d^2}{dt^2}\log f(x_b)\geq 1-O(\frac{\log b}{b}).$$
Then by propositions \ref{prop-lambda2-2} and \ref{prop-logf2-2}, we have 
$$\gamma(a)\Gamma(a)\geq 1+O(\frac{(\log a)^8}{a}).$$

let $p_1=\tilde{p}(m(b))$, $q_1=\tilde{q}(m(b))$. Since 
$$h_b(x_b)=(1+O(\frac{1}{b}))\int_{0}^{\infty}e^{-G_b(i)}di=(1+O(\frac{(\log b)^8}{b}))\int_{Q_b}e^{-\tilde{G}_b(i)}di,$$
and $$h_b(x_b)=x_b(1+O(\frac{1}{a}))\int_{0}^{\infty}e^{-g_b(i)}di=(1+O(\frac{(\log b)^8}{b}))x_b\int_{P_b}e^{-\tilde{g}_b(t)}dt,$$
we get 
$$\Gamma(b)\leq p_1(1+O(\frac{(\log b)^8}{b}))h^2_b(x_b),$$ and
$$\gamma(b)\leq p_1(1+O(\frac{(\log b)^8}{b}))\frac{h^2_b(x_b)}{x_b^2}.$$

So we get that 
$(\nu(b))^4p_1^2\geq 1+O(\frac{(\log b)^8}{b})$. So $$\nu(b)>(1-C_3\frac{(\log b)^8}{b})\frac{1}{\sqrt{p_1}}$$ for $b$ large enough, for some $C_3>0$ independent of $b$.
And similarly, let $b'>b$ be a local maximum, then we have $$\nu(b')<(1+C_4\frac{(\log b)^8}{b})\frac{1}{\sqrt{q_1}},$$for some $C_2>0$ independent of $b$. 
Let $C_5=\max \{C_3,C_4 \}$. Then by the same argument as in section \ref{sec-3}, we have
$$ (1-C_5\frac{(\log b)^8}{b})\frac{1}{\sqrt{p_1}}< \nu(a)< (1+C_5\frac{(\log b)^8}{b})\frac{1}{\sqrt{q_1}},$$
for all $a\geq b$. 

 Then we get a finer version of lemma \ref{lem-p1q1} by repeating the proof there.
\begin{lem}\label{lem-p1q12}
  There exists $C>0$ such that for $a$ large enough, assume $m(a)>1+C\frac{(\log a)^8}{a}$, then
  $$(1-C_5\frac{(\log a)^8}{a})\frac{1}{\sqrt{p_1}}< \nu(a')< (1+C_5\frac{(\log a)^8}{a})\frac{1}{\sqrt{q_1}},$$
  where $p_1=\tilde{p}(m(a))$ and $q_1=\tilde{q}(m(a))$,
  for all $a'\geq a+2\sqrt{a}\log a$.
\end{lem}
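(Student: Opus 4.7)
The plan is to retrace the proof of Lemma \ref{lem-p1q1} line by line, replacing the old error rate $\log a/\sqrt{a}$ by the finer rate $(\log a)^8/a$ made available by Propositions \ref{prop-lambda2-2}--\ref{prop-logf2-2} and formulas \eqref{e-use-et}--\eqref{e-use-xi}. The case when $\nu$ has a local extremum inside $[a, a+2\sqrt{a}\log a]$ is immediately handled by the local-extremum bounds established just before the lemma statement, and extended to all $a' \geq a$ by the same two-sided sandwich argument used in Section \ref{sec-3}. Thus we may assume $\nu$ is monotone on this interval; by reflection symmetry, assume it is decreasing.

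Pick a local minimum $a_1 > a + 2\sqrt{a}\log a$ with $\nu(a_1) \leq \nu(a')$ on $[a, a_1]$; the lower bound on $\nu(a')$ for $a' \geq a$ then follows directly from the bound $\nu(a_1) > (1 - C_5 (\log a_1)^8/a_1)/\sqrt{p_1}$ just obtained. For the upper bound, suppose for contradiction that it fails somewhere on $[a, a+2\sqrt{a}\log a]$; monotonicity forces $\nu(a') \geq (1+C_5(\log a)^8/a)/\sqrt{q_1}$ throughout. Inserting this into the sharpened estimate
$$\lambda''(b) \geq q_1\bigl(1 + O((\log a)^8/a)\bigr)\frac{h_b^2(x_b)}{x_b^2}$$
coming from Proposition \ref{prop-lambda2-2} gives $x_b\lambda''(b) \geq 1 + 2C_5(\log a)^8/a - O((\log a)^8/a)$ on this interval. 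Setting $a_2 = a + \sqrt{a}\log a$, this lower bound on $\lambda''$ near $a_2$, combined with the identity $h_{a_2}(x_{a_2+1})/x_{a_2+1} = \int e^{-g_{a_2}(t)}\,dt$ and Proposition \ref{prop-logf2-2}, yields
$$\nu(a_2) \leq \Delta_{a_2}\bigl(1 + O((\log a)^8/a)\bigr)\sqrt{2\pi}.$$

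The contradiction is then extracted as in Lemma \ref{lem-p1q1}: using $\Delta_a < \exp\{(1.1/\pi)(1 - 1/\sqrt{m})^2\}$ with its Taylor estimate $e^{-(1.1/\pi)(1-1/\sqrt{m})^2} - 1 \geq -(1.1/4\pi)(m-1)^2$, together with the one-sided bound $2\pi q_1 \leq 1 - 0.09(m(a)-1)$ coming from $\tilde q(m) \leq 1/(2\pi) - 0.031(m-1)$, one arrives at an inequality of the form
$$1 - 0.05(m(a)-1) > \bigl(1 - 0.01(m(a)-1)^2\bigr)\bigl(1 - C'(\log a)^8/a\bigr)$$
for some $C'$ independent of $a$. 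Choosing $C$ larger than all the cumulative implicit constants (in particular larger than $C'$) makes the hypothesis $m(a) - 1 > C(\log a)^8/a$ incompatible with this inequality, giving the desired contradiction. The monotone-increasing case is handled symmetrically.

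The main obstacle is bookkeeping rather than any new idea: every replacement of $g_a$ by $\tilde g_a$ or $G_a$ by $\tilde G_a$ injects an error of order $(\log a)^8/a$, and the constant $C$ in the hypothesis must be chosen to dominate the sum of all such errors together with the constants appearing in \eqref{e-use-et}--\eqref{e-use-xi}. The conceptual structure is exactly that of the proof of Lemma \ref{lem-p1q1}, so no new machinery is required beyond the refinements already installed in Sections \ref{sec-looseends} and \ref{sec-5}.
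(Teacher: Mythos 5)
Your proof is correct and follows the paper's own strategy: reduce to the monotone case, assume the upper bound fails, propagate the lower bound on $\nu$ through the $\tilde g_a$, $\tilde G_a$ machinery of Section~\ref{sec-looseends} via Propositions~\ref{prop-lambda2-2}--\ref{prop-logf2-2} to force $x_b\lambda''(b)\geq 1+c(\log a)^8/a$, and then contradict the assumed size of $m(a)-1$. The only cosmetic difference is that you keep $\Delta_{a_2}$ explicit and re-invoke its $(m-1)^2$-order Taylor bound, whereas the paper absorbs it into the $O((\log a)^8/a)$ constants (permissible since $a_2-n_{x_{a_2}}=O(\log a)$ already gives $\Delta_{a_2}=1+O((\log a)^2/a)$); both close the argument in the same way.
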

\begin{proof}
  We omit the first few lines in the proof of lemma \ref{lem-p1q1} and go directly to the case when $\mu$ is decreasing on $[a,a+2\sqrt{a}\log a]$ and that $\nu(a+2\sqrt{a}\log a)\geq (1+C_5\frac{(\log a)^8}{a})\frac{1}{\sqrt{q_1}}$, hence $$\nu(a')\geq (1+C_5\frac{(\log a)^8}{a})\frac{1}{\sqrt{q_1}},$$ for $a'\in [a,a+2\sqrt{a}\log a]$.

  Notice that for all $b\geq a$ we have
  \begin{eqnarray*}
    \gamma(b)&\geq& q_1(\int_{P_b}e^{-\tilde{g}_b(t)}dt+\epsilon(b))^2  \\
    &\geq&q_1\frac{\nu^2(b)}{x_b}(1+O(\frac{(\log a)^{8}}{a})).
  \end{eqnarray*}
  
  Therefore 
  $$x_b\gamma(b)\geq 1+C_6\frac{(\log a)^{8}}{a},$$
  for $b\in [a,a+2\sqrt{a}\log a]$, for some $C_6$ independent of $a$.
  So we have $$x_b\lambda''(b)\geq 1+C_7\frac{(\log a)^{8}}{a},$$
  for $b\in [a,a+2\sqrt{a}\log a]$, for some $C_7$ independent of $a$.
  Let $a_2=a+\sqrt{a}\log a$, then 
 \begin{eqnarray*}
  h_{a_2}(x_{a_2})&\leq& (1+C_8\frac{(\log a)^{8}}{a})\int_{Q_a}e^{-\Phi_{a_2}(i)}di\\
  &\leq & (1+C_9\frac{(\log a)^{8}}{a})\sqrt{2\pi x_{a_2}},
 \end{eqnarray*}
 for some $C_8, C_9$ independent of $a$. 
  So we have 
  $$\nu(a_2)\leq (1+C_9\frac{(\log a)^{8}}{a})\sqrt{2\pi},$$
which implies 
 \begin{equation}\label{e-impossible-0}
  (1+C_5\frac{(\log a)^{8}}{a})\frac{1}{\sqrt{q_1}}\leq (1+C_9\frac{(\log a)^{8}}{a})\sqrt{2\pi}.
 \end{equation}

Then one can see that there exists $C>0$ such that if $$m(a)>1+C\frac{(\log a)^{8}}{a} ,$$ then formula \ref{e-impossible-0} is impossible.

The case when $\mu$ is increasing is similar.
\end{proof}

Then we get a finer version of theorem \ref{theo-non-monotone} by repeating the proof there.
\begin{theorem}\label{theo-non-monotone-2}
  There exists $C>0$ such that for $a$ large enough, we have $$m(a)\leq 1+C\frac{(\log a)^8}{a}.$$
\end{theorem}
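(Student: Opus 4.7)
The plan is to mimic the proof of Theorem \ref{theo-non-monotone}, replacing the coarser ingredients used there by the refined estimates developed in Sections \ref{sec-looseends} and \ref{sec-5}. The key point is that the contraction mechanism is unchanged, but each step of the iteration now costs only $(\log a)^8/a$ instead of $\log a/\sqrt{a}$.

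I would argue by contradiction: assume there exist arbitrarily large $a$ with $m(a') > 1 + C(\log a')^8/a'$ for every $a' \in [a, 2a]$, where $C$ is a constant to be chosen. This puts every such $a'$ in the hypothesis of Lemma \ref{lem-p1q12}, so for $p_1 = \tilde p(m(a))$, $q_1 = \tilde q(m(a))$ we have
\[
(1 - C_5 \tfrac{(\log a)^8}{a}) \tfrac{1}{\sqrt{p_1}} < \nu(a') < (1 + C_5 \tfrac{(\log a)^8}{a}) \tfrac{1}{\sqrt{q_1}}
\]
for all $a' \geq a_1 := a + 2\sqrt{a}\log a$. Inserting this into Propositions \ref{prop-lambda2-2} and \ref{prop-logf2-2} (together with the identities $h_b(x_b) = (1 + O((\log b)^8/b)) \int_{Q_b} e^{-\tilde G_b}$ and $h_b(x_b) = (1 + O((\log b)^8/b))\, x_b \int_{P_b} e^{-\tilde g_b}$ used in the preceding subsection), I obtain two-sided bounds on $x_b \lambda''(b)$ and on $x_b^{-1} \frac{d^2}{dt^2}\log f(x_b)$ of the form $\frac{q_1}{p_1}(1 - C_6 \tfrac{(\log a)^8}{a}) \leq \cdot \leq \frac{p_1}{q_1}(1 + C_6 \tfrac{(\log a)^8}{a})$ for $b \geq a_1$.

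Taking the ratio and folding in the extra factor $(1 + C_2(\log a)^2/a)$ from the definition of $m$ in Section \ref{sec-5}, one deduces
\[
m(a_1') - 1 \leq \frac{p_1^2}{q_1^2} - 1 + C_7 \tfrac{(\log a)^8}{a},
\]
where $a_1'$ is defined by $a_1' - \sqrt{a_1'}\log a_1' = a_1$. Using the inequality $\tilde p^2/\tilde q^2 - 1 < 0.89(m - 1)$ recalled at the end of the Setting-up section, this becomes the linear recursion $m(a_1') - 1 < 0.9\,(m(a) - 1) + C_7 (\log a)^8/a$. Iterating exactly as in the proof of Theorem \ref{theo-non-monotone} with $a_{n+1} = a_n' + 2\sqrt{a_n'}\log a_n'$, induction yields
\[
m(a_n') - 1 < 0.9^{\,n-1}(m(a) - 1) + 10\, C_7\, \tfrac{(\log a)^8}{a}.
\]
Choosing $n$ of order $\sqrt{a}/(4 \log a)$ keeps $a_n' < 2a$ while driving $0.9^{n-1}(m(a) - 1)$ far below $(\log a)^8/a$; hence $m(a_n') - 1 < 12\, C_7\, (\log a_n')^8/a_n'$, contradicting the standing assumption provided $C > 12\, C_7$. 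The monotone case reduces, verbatim as in Theorem \ref{theo-general-log}, to the non-monotone argument applied at points where $\nu''$ is $O(1/a^2)$.

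The main obstacle is the bookkeeping of the three independent sources of multiplicative error entering the recursion: the intrinsic $O((\log a)^8/a)$ losses in Propositions \ref{prop-lambda2-2} and \ref{prop-logf2-2}, the $(1 + C_2(\log a)^2/a)$ perturbation built into the new $m(a)$, and the $C_5 (\log a)^8/a$ slack from Lemma \ref{lem-p1q12}. One has to check that after combining them the residual coefficient in front of $m(a)-1$ stays strictly below $1$; this is where the sharp derivative bound $(d/dm)(\tilde p^2/\tilde q^2) < 0.89$ is essential, and once it is in hand the geometric iteration is routine.
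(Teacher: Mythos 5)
Your proposal is correct and follows essentially the same route that the paper intends when it says the theorem is obtained ``by repeating the proof'' of Theorem~\ref{theo-non-monotone}: argue by contradiction on $[a,2a]$, feed Lemma~\ref{lem-p1q12} into Propositions~\ref{prop-lambda2-2} and~\ref{prop-logf2-2} to get two-sided bounds on $x_b\lambda''(b)$ and $x_b^{-1}\frac{d^2}{dt^2}\log f(x_b)$, deduce the contraction $m(a_1')-1<0.9(m(a)-1)+O((\log a)^8/a)$, and iterate $\sim\sqrt a/\log a$ times. The only cosmetic difference is bookkeeping: the paper absorbs the additive error into the multiplicative constant (giving a pure $0.95^{n}$ decay) using the contradiction hypothesis $m(a)-1\gtrsim (\log a)^8/a$, whereas you carry the additive $(\log a)^8/a$ term through the iteration and sum the geometric series; both give the same conclusion up to the choice of $C$.
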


\subsection*{the general case} 
Furthermore, we get a finer version of theorem \ref{theo-general-log} by repeating the proof there.
\begin{theorem}\label{theo-general-a-log}
  For $a$ large engough, we have $$m(a)\leq 1+1000C_5\frac{(\log a)^8}{a}.$$
\end{theorem}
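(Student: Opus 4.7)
The plan is to mirror the reduction from Theorem \ref{theo-non-monotone} to Theorem \ref{theo-general-log}, now starting from the sharper non-monotonic bound of Theorem \ref{theo-non-monotone-2} and propagating via Lemma \ref{lem-p1q12}. The non-monotonic case is immediate from Theorem \ref{theo-non-monotone-2}, with its constant absorbed into $1000 C_5$, so the real content is the monotone case.

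Assume $\nu(a)$ is eventually monotone; by symmetry suppose $\nu'(a) \geq 0$ for $a \geq a_0$. Since $\nu$ is uniformly bounded by Theorem \ref{theo-log-sqrt}, we have $\int_{a_0}^{\infty} \nu'(a)\, da < \infty$, forcing $\{a : \nu'(a) \geq 1/a\}$ to have zero asymptotic density; combined with Corollary \ref{cor-tt-1}, one deduces $\nu'(a) \to 0$. Following the dichotomy used in Theorem \ref{theo-general-log}, I would split on whether $\nu'$ is itself monotone: if yes, $\nu'(a) = o(1/a)$ produces a sequence $a_j \to \infty$ with $\nu''(a_j) = O(1/a_j^2)$; if no, any local minimum of $|\nu'|$ at which $|\nu'(a_\ast)| < 1/a_\ast$ gives $\nu''(a_\ast) = 0$. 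At such \emph{quasi-extremal} points $a_\ast$, the key inequality $\lambda''(a_\ast)\,\frac{d^2}{dt^2}\log f(x_{a_\ast}) \geq 1 + O(\log a_\ast / a_\ast)$ used at honest local minima of $\nu$ in the proof of Theorem \ref{theo-non-monotone-2} remains valid, since the extra contribution of $\nu''(a_\ast) = O(1/a_\ast^2)$ is of strictly lower order than the target $(\log a)^8/a$.

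Consequently the derivation culminating in Lemma \ref{lem-p1q12} and Theorem \ref{theo-non-monotone-2} applies at every $a_\ast$, yielding $m(a_\ast) \leq 1 + C (\log a_\ast)^8 / a_\ast$ for a fixed constant $C$. To upgrade this to a bound for all large $a$, I would invoke Lemma \ref{lem-p1q12} itself, which transfers such estimates across intervals of length $2\sqrt{a}\log a$. The sparse sequence $\{a_\ast\}$ can be arranged to meet every such interval (using the zero-density observation in subcase (i) and the placement of local minima of $\nu'$ in subcase (ii)), so no gaps remain and the constant inflates only by a bounded multiplicative factor, absorbed into $1000 C_5$.

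The main obstacle is compatibility of the error scale $(\log a)^8/a$ with the propagation window $2\sqrt{a}\log a$: one must verify that $(\log(a + 2\sqrt{a}\log a))^8/(a + 2\sqrt{a}\log a) = (1 + o(1))(\log a)^8/a$, so the bound at $a_\ast$ transfers to neighbouring $a$ without degradation. This is a routine estimate, but it is precisely where the crude scale $\log a/\sqrt{a}$ of Section \ref{sec-3} would have been self-defeating, and it is the reason the machinery of Section \ref{sec-looseends} is needed to produce the relative error of order $(\log a)^8/a$ in $\lambda''(a)$ and $\frac{d^2}{dt^2}\log f$.
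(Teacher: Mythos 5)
Your approach matches the paper's: Theorem \ref{theo-general-a-log} is obtained by rerunning the proof of Theorem \ref{theo-general-log} (the dichotomy on whether $\nu'$ is monotone, producing quasi-extremal points with $\nu''=O(1/a^2)$ or $\nu''=0$ at which the non-monotonic argument can be applied) with the sharper inputs Theorem \ref{theo-non-monotone-2} and Lemma \ref{lem-p1q12} replacing their first-round counterparts. The paper compresses this into a single sentence, and you have correctly supplied the details, including the worthwhile check that the $\nu''$-contribution at quasi-extremal points is of lower order than the target $(\log a)^8/a$.
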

As a direct corollary, we get 
\begin{theorem}\label{theo-log9-a}
  We have 
  \begin{equation}
    \frac{h_a(x_a)}{\sqrt{x_a}}=\sqrt{2\pi}+O(\frac{(\log a)^8}{a}),
    \end{equation}
    
    \begin{equation}
    x_a\lambda''(a)=1+O(\frac{(\log a)^8}{a}),
    \end{equation}and 
    \begin{equation}
      \frac{1}{x}\frac{d^2}{dt^2}\log f(x)=1+O(\frac{(\log x)^8}{x}),
    \end{equation}
\end{theorem}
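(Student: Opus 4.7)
The plan is to extract the three estimates as direct consequences of the sharp bound $m(a)\le 1+1000\,C_5\,(\log a)^8/a$ from Theorem \ref{theo-general-a-log}. From the definition $m(a)=(1+C_2(\log a)^2/a)\,M_2(a)/M_1(a)$ introduced in Section \ref{sec-5}, this translates to $M_2(a)/M_1(a)=1+O((\log a)^8/a)$. Combined with the limits $\lim_{a\to\infty}a\lambda''(a)=1$ and $\lim_{x\to\infty}(\tfrac{d^2}{dt^2}\log f)/x=1$ from Theorem \ref{thm-start}, this forces both $M_1(a)$ and $M_2(a)$ to lie within $1+O((\log a)^8/a)$ of $1$. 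Unwinding the definitions of $M_1',M_2',M_1'',M_2''$ and noting that the shift in argument by $\sqrt{a}\log a$ costs only a factor $1+O(\log a/\sqrt{a})$ inside an $O((\log a)^8/a)$-error, one reads off the second and third estimates of the theorem.

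For the first estimate the key is the exact identity $\int_{-\infty}^{\infty} e^{-g_a(t)}\,dt = h_a(x_{a+1})/x_{a+1}$, which follows from the change of variables $t=\log x$ together with the definition of $c(a)$. Since $h_a'(x_a)=0$, one has $h_a(x_{a+1})=h_a(x_a)(1+O(1/x_a))$. Now invoke \eqref{e-g-0} from Section \ref{sec-looseends}: $\int e^{-g_a(t)}\,dt=(1+O((\log a)^6/a))\int_{P_a}e^{-\tilde g_a(t)}\,dt$. The sharp bound on $M_1,M_2$ together with the definition of $\tilde g_a$ (the display preceding \eqref{e-use-et}) gives $\tilde g_a''(t)/x_{a+1}=1+O((\log a)^8/a)$ uniformly on $P_a$, so a routine Gaussian computation yields $\int_{P_a}e^{-\tilde g_a(t)}\,dt=\sqrt{2\pi/x_{a+1}}\,(1+O((\log a)^8/a))$. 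Combining these factors and using $x_{a+1}=x_a+O(1)$ delivers the first estimate.

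The proof is therefore essentially a repackaging of earlier work with no new analytic input required beyond Theorem \ref{theo-general-a-log}. The only point demanding genuine care is the bookkeeping of the different error rates that appear at intermediate steps: $O(1/x_a)$ from $x_{a+1}\leftrightarrow x_a$, $O((\log a)^6/a)$ from the replacement $g_a\to\tilde g_a$, and the new $O((\log a)^8/a)$ from the width of $\tilde g_a''/x_{a+1}$. All of these are dominated by $O((\log a)^8/a)$, so this single rate propagates to the conclusion, and there is no need to tease out any cancellation. The mildly subtle cosmetic point is that $M_1',M_2''$ etc.\ are defined by infima and suprema over the shifted interval $\{x>x_b-\sqrt{b}\log b\}$, but this creates only a lower-order shift in the argument which is absorbed by the same error rate.
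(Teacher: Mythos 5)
Your proposal is correct, and it follows the route the paper leaves implicit when it calls Theorem \ref{theo-log9-a} a ``direct corollary'' of Theorem \ref{theo-general-a-log}: reading off the second and third estimates from $M_2(a)/M_1(a)=1+O((\log a)^8/a)$ together with the fact that $M_1(a)\le1\le M_2(a)$ (since each is the tail inf or sup of a quantity converging to $1$), and then deriving the first estimate from the identity $h_a(x_{a+1})/x_{a+1}=\int e^{-g_a}\,dt$ exactly as in the proof of Theorem \ref{theo-log-sqrt}. You also correctly flag the one step that must differ from that earlier proof: at this error level the naive bound $g_a''\approx e^t$ only gives $O(\log a/\sqrt a)$ because $e^t$ itself moves by that relative amount across $P_a$, so passing to $\tilde g_a$ via \eqref{e-g-0} (whose $O((\log a)^6/a)$ error is dominated) and using the constant-curvature squeeze from the display before \eqref{e-use-et} is genuinely necessary for the first estimate. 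The bookkeeping you list is accurate and nothing cancels; the proposal is sound.
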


\subsection*{Applications.}
Then we can get a sharper version of corollary \ref{cor-tt-1}.
\begin{cor}\label{cor-tt-2}
  \begin{equation}
    \tilde{t}_a-t_{a+1}=-\frac{1}{2x_{a+1}}+O(\frac{(\log a)^8}{x_a^{3/2}}),
    \end{equation}
\end{cor}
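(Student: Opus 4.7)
The relation
$$\tilde{t}_a - t_{a+1} = \frac{\int_{-\infty}^{\infty}(t-t_{a+1})e^{-g_a(t)}\,dt}{\int_{-\infty}^{\infty}e^{-g_a(t)}\,dt},$$
already used in the proof of corollary \ref{cor-tt-1}, is the starting point. There the two half-line integrals in the numerator were each estimated to leading order and shown to nearly cancel, yielding only $O(\log a / a)$. To obtain the finer claim I have to retain the next order of the Laplace expansion, namely the contribution of the cubic term in the Taylor expansion of $g_a$ at $t_{a+1}$.

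My plan is to first pass from $g_a$ to the approximation $\tilde{g}_a = \psi_a + \Delta g$ introduced in section \ref{sec-looseends}. By the same argument that gave \eqref{e-g-0} and \eqref{e-g-2}, together with its odd-moment analogue, this replacement multiplies both numerator and denominator by $1+O((\log a)^8/a)$, so it suffices to analyse
$$\frac{\int_{P_a} \tau\, e^{-\tilde{g}_a(t_{a+1}+\tau)}\,d\tau}{\int_{P_a} e^{-\tilde{g}_a(t_{a+1}+\tau)}\,d\tau}.$$
Next I expand $\tilde{g}_a(t_{a+1}+\tau) = \tfrac{1}{2}g_a''(t_{a+1})\tau^2 + \tfrac{1}{6}g_a'''(t_{a+1})\tau^3 + R(\tau)$ with $R$ controllable on $P_a$, and rewrite $e^{-\tilde{g}_a}$ as $e^{-x_{a+1}\tau^2/2}\bigl(1 - \tfrac{1}{6}g_a'''(t_{a+1})\tau^3 + \dots\bigr)$. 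Standard Gaussian moment identities then yield, up to the admissible error,
$$\tilde{t}_a - t_{a+1} = -\frac{g_a'''(t_{a+1})}{2 x_{a+1}^{2}} + O\!\left(\frac{(\log a)^{8}}{x_a^{3/2}}\right),$$
since the odd Gaussian moment against the cubic correction is the only surviving leading contribution.

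It then remains to show that $g_a'''(t_{a+1}) = \frac{d^3}{dt^3}\log f(x_{a+1}) = x_{a+1}\bigl(1 + O((\log a)^8/\sqrt{a})\bigr)$, which together with theorem \ref{theo-log9-a} (used to identify $g_a''(t_{a+1})$ with $x_{a+1}$ to the necessary precision) will give the claimed formula upon substitution. The hard part will be precisely this refinement of proposition \ref{prop-f3}: one cannot differentiate the pointwise asymptotic of $\frac{d^2}{dt^2}\log f$ term by term. The natural remedy is to run the quadratic-approximation machinery of section \ref{sec-looseends} on the moment identity \eqref{e-f3}, pairing the odd Gaussian moment against the cubic term $\frac{D_3}{6}j^3$ of the expansion \eqref{e-Phi-a} so as to evaluate $\sum(i-a)^3\tau_{x_a}(i)$ to the required relative precision $(\log a)^8/\sqrt{a}$. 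Once that refined third-derivative estimate is in place, the calculation above closes out the corollary.
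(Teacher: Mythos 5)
Your proposal contains a wrong step and a circularity that together make it fail. First, the passage from $g_a$ to $\tilde g_a=\psi_a+\Delta g$ does \emph{not} preserve the first moment to the needed relative precision, and your Taylor expansion of $\tilde g_a$ is incorrect. Since $\tilde g_a=\psi_a+(g_a-\phi_a)$ and $\psi_a$ is an even quadratic while $\phi_a'''(t_{a+1})=e^{t_{a+1}}=x_{a+1}$, the cubic coefficient of $\tilde g_a$ at $t_{a+1}$ is $\tfrac16\bigl(g_a'''(t_{a+1})-x_{a+1}\bigr)$, not $\tfrac16\,g_a'''(t_{a+1})$. In fact $\int\tau e^{-\psi_a}d\tau=0$ exactly, so $\int\tau e^{-\tilde g_a}dt=O((\log a)^{8}/a^{2})$ while $\int\tau e^{-g_a}dt\sim -\tfrac12\sqrt{2\pi/x_{a+1}^{3}}$; the two first moments differ by a factor, not by a $1+O((\log a)^8/a)$ correction. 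The symmetrization buried in $\tilde g_a$ destroys precisely the skew contribution that the corollary computes, and the formula $-g_a'''(t_{a+1})/(2x_{a+1}^2)$ with $g_a'''\approx x_{a+1}$ only looks right because of the compensating arithmetic slip.

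Second, even if you repaired the expansion by working with $g_a$ itself, your plan to establish $\frac{d^3}{dt^3}\log f(x_{a+1})=x_{a+1}\bigl(1+O((\log a)^8/\sqrt{a})\bigr)$ as a prerequisite is circular. That estimate (Proposition~\ref{prop-lambda33}) is obtained later in the paper through the Poisson-summation machinery for odd degrees (Lemma~\ref{lem-sum-integral-odd}), which in turn rests on Propositions~\ref{prop-lambda32}, \ref{prop-lambda4}, \ref{prop-lambda5} --- and Proposition~\ref{prop-lambda32} is derived \emph{from} Corollary~\ref{cor-tt-2} together with Proposition~\ref{prop-j345}. The paper's actual proof avoids all of this by keeping $\phi_a$, not $\psi_a$, as the reference function: $\phi_a(t_{a+1}+\tau)=x_{a+1}(e^\tau-1-\tau)$ is explicit, its cubic coefficient is $x_{a+1}/6$ exactly, and the only external input is the pointwise bound $\Delta g=O((\log a)^8\tau^2)$ already furnished by Theorem~\ref{theo-log9-a}. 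No estimate of $\frac{d^3}{dt^3}\log f$ is ever used or needed at this stage; the key insight you are missing is that $\phi_a$ already carries the cubic data exactly, so the problem of differentiating the asymptotic of $\frac{d^2}{dt^2}\log f$ simply does not arise.
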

\begin{proof}
  With the notations in subsection \ref{subsec-fls}, now we have 
  $$\Delta g(t)=O((\log a)^8(t-t_{a_1})^2)=O(\frac{(\log a)^{10}}{a}).$$
  Therefore \begin{eqnarray*}
    \int_{-\infty}^{\infty}e^{-g_a(t)}dt&=& \int_{-\infty}^{\infty}e^{-\phi_a(t)}(1-\Delta g(t)+O(\frac{(\log a)^{20}}{a^2}) )dt\\
    &=&(1+O(\frac{(\log a)^{8}}{a}))\int_{-\infty}^{\infty}e^{-\phi_a(t)}dt\\
    &=&(1+O(\frac{(\log a)^{8}}{a}))\sqrt{\frac{2\pi}{x_{a+1}} }
  \end{eqnarray*}
  Since $\phi_a(t)$ has Tayler expansion at $t_{a+1}$:
  $$\phi_a(t)=\frac{x_{a+1}}{2}\tau^2+\frac{x_{a+1}}{6}\tau^3+\frac{x_{a+1}}{24}\tau^4+\cdots, $$ 
  and that for $n\geq 0$ an integer, we have
  $$\int_{-\infty}^{\infty}e^{-x^2/2}x^{2n}dx=(2n-1)!!,$$
  we can calculate 
  $$\int_{Q_a}\tau e^{-\phi_a(t)}dt=(1+O(\frac{1}{a}))\frac{1}{2}\sqrt{\frac{2\pi}{x_{a+1}^3}}.$$
 Therefore, \begin{eqnarray*}
    \int_{-\infty}^{\infty}\tau e^{-g_a(t)}dt&=& \int_{-\infty}^{\infty}\tau e^{-\phi_a(t)}(1-\Delta g(t)+O(\frac{(\log a)^{20}}{a^2}) )dt\\
    &=&\int_{Q_a}\tau e^{-\phi_a(t)}dt+O(\frac{(\log a)^{8}}{a^{3/2}})\\
    &=&\frac{1}{2}\sqrt{\frac{2\pi}{x_{a+1}^3}} +O(\frac{(\log a)^{8}}{a^{2}})
  \end{eqnarray*}
  Therefore, we have $$\tilde{t}_a-t_{a+1}=-\frac{1}{2x_{a+1}}+O(\frac{(\log a)^8}{x_a^{3/2}}).$$
\end{proof}
Recall that we have defined $I_i=\int\frac{(\log x-t_{a+1})^idx}{h_a(x)}$. Similarly, we can prove the following corollary.
\begin{prop}\label{prop-j345}
  We have
  \begin{equation}\label{e-j3}
    \int\frac{(\log x-t_{a+1})^3  dx}{h_a(x)}=-\frac{5}{2x^2_{a+1}}+O(\frac{(\log a)^8}{a^{5/2}}),
  \end{equation}
  \begin{equation}\label{e-j4}
    I_4=\frac{3}{x^2_{a+1}}+O(\frac{(\log a)^8}{a^{3}}),
  \end{equation}
and
  \begin{equation}\label{e-j5}
    I_5=-\frac{35}{2x^3_{a+1}}+O(\frac{(\log a)^8}{a^{7/2}}).
  \end{equation}
\end{prop}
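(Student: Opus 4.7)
The plan is to follow the strategy used in the proof of Corollary~\ref{cor-tt-2}. For each $i\in\{3,4,5\}$ we rewrite
$$I_i = \frac{\int_{-\infty}^{\infty} (t-t_{a+1})^i e^{-g_a(t)}\,dt}{\int_{-\infty}^{\infty} e^{-g_a(t)}\,dt},$$
and on the interval $P_a$ we substitute $g_a$ by $\phi_a$, incurring an error controlled by $\Delta g = g_a - \phi_a$; the complement of $P_a$ contributes $\epsilon(a)$ by the mass concentration in Remark~\ref{rmk-1}. By Theorem~\ref{theo-log9-a}, $|\Delta g(t)| = O((\log a)^8 (t-t_{a+1})^2) = O((\log a)^{10}/a)$ on $P_a$, and the extra $(t-t_{a+1})^2$ factor gives an extra $1/x_{a+1}$ when integrated against the Gaussian, so that the net relative error in all integrals is $O((\log a)^8/a)$, exactly as in Corollary~\ref{cor-tt-2}.

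After the rescaling $\tau = t - t_{a+1} = y/\sqrt{x_{a+1}}$, the Taylor expansion of $\phi_a$ at $t_{a+1}$ becomes
$$\phi_a = \frac{y^2}{2} + \frac{y^3}{6\sqrt{x_{a+1}}} + \frac{y^4}{24\,x_{a+1}} + \cdots,$$
hence $e^{-\phi_a} = e^{-y^2/2}\bigl(1 - y^3/(6\sqrt{x_{a+1}}) + O(1/x_{a+1})\bigr)$. Using $\int_{-\infty}^{\infty} y^{2n} e^{-y^2/2}\,dy = (2n-1)!!\sqrt{2\pi}$, the even moment $I_4$ is immediate: the pure Gaussian contribution $\int y^4 e^{-y^2/2}\,dy/\int e^{-y^2/2}\,dy = 3$ gives $3/x_{a+1}^2$, the cubic correction kills by parity, and the rest is $O(1/x_{a+1}^3)$. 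For the odd moments $I_3$ and $I_5$ the pure Gaussian term vanishes and the leading nonzero contribution comes from the cubic correction $-y^3/(6\sqrt{x_{a+1}})$: pairing it with $y^3$ (resp.\ $y^5$) and using $\int y^6 e^{-y^2/2}\,dy = 15\sqrt{2\pi}$, $\int y^8 e^{-y^2/2}\,dy = 105\sqrt{2\pi}$ gives the leading values $I_3 = -5/(2x_{a+1}^2)$ and $I_5 = -35/(2x_{a+1}^3)$.

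The main step that requires care is the error bookkeeping for the odd moments, where, because the Gaussian leading term vanishes, one must verify that no subleading contribution exceeds the claimed bound. The multiplicative $O((\log a)^8/a)$ error from the $g_a \to \phi_a$ replacement acts on a leading term of size $O(1/a^2)$ for $I_3$ (resp.\ $O(1/a^2)$ for $I_4$, $O(1/a^3)$ for $I_5$), producing remainders $O((\log a)^8/a^3)$, $O((\log a)^8/a^3)$, $O((\log a)^8/a^4)$ respectively, all absorbed into the stated errors. The purely analytic Taylor corrections beyond the first cubic term (namely the $y^4/(24\,x_{a+1})$ and $y^5/(120\,x_{a+1}^{3/2})$ pieces of $\phi_a$ together with the square of the cubic correction) contribute even powers of $y$ that survive pairing with $y^3$ or $y^5$; their order is $O(1/x_{a+1}^3)$ for $I_3$ and $O(1/x_{a+1}^4)$ for $I_5$, again well within the claimed errors. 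No single step is conceptually hard; the work is the careful power-counting assembly together with transferring all integrals from $P_a$ to $\R$ using the tail bound of Remark~\ref{rmk-1}.
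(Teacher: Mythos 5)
Your approach is the same as the paper's: Taylor-expand $\phi_a$ about $t_{a+1}$, rescale $\tau=y/\sqrt{x_{a+1}}$, compute Gaussian moments, and transfer back from $\phi_a$ to $g_a$ via $\Delta g$. The leading values $-\tfrac{5}{2x_{a+1}^2}$, $\tfrac{3}{x_{a+1}^2}$, $-\tfrac{35}{2x_{a+1}^3}$ are computed correctly, and the treatment of the higher analytic Taylor corrections is fine. However, your bookkeeping of the $\Delta g$-error for the odd moments is not correct, even though the final result still falls within the proposition's stated bounds. You assert that the replacement $g_a\to\phi_a$ produces a multiplicative error $\bigl(1+O((\log a)^8/a)\bigr)$ acting on $I_3=O(a^{-2})$ to give a remainder $O((\log a)^8/a^{3})$, and similarly $O((\log a)^8/a^{4})$ for $I_5$. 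That multiplicative picture is valid for the even moment $I_4$, because there $\int\tau^4 e^{-\phi_a}\,d\tau$ has a nonvanishing pure-Gaussian leading term of size $\sim a^{-5/2}$, so the extra $\tau^2$ carried by $\Delta g$ shaves off exactly one power of $a$. For $I_3$ the pure-Gaussian term vanishes, and $\int\tau^3 e^{-\phi_a}\,d\tau$ is itself suppressed to $O(a^{-3})$; meanwhile $\bigl|\int\tau^3 e^{-\phi_a}\Delta g\,d\tau\bigr|\le (\log a)^8\int|\tau|^5 e^{-\phi_a}\,d\tau = O((\log a)^8 a^{-3})$ is of the same raw order, so the replacement error is \emph{not} relatively $O((\log a)^8/a)$. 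After dividing by $\int e^{-\phi_a}\,d\tau\sim a^{-1/2}$, the additive error on $I_3$ is $O((\log a)^8 a^{-5/2})$, exactly the bound stated in \eqref{e-j3}, not your claimed $O((\log a)^8/a^{3})$. The analogous bound for $I_5$ is $O((\log a)^8 a^{-7/2})$, matching \eqref{e-j5}, not your $O((\log a)^8/a^{4})$. Your own caveat about the odd moments anticipated this issue, but you then applied the even-moment multiplicative heuristic anyway. The proposition is still proved because the correct errors match the stated ones; just correct the bookkeeping so the claimed remainders agree with what your estimate actually yields.
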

\begin{proof}
 We just need to calculate as in the proof of corollary \ref{cor-tt-2} the following:
 $$ \int_{Q_a}\tau^3 e^{-\phi_a(t)}d\tau= \sqrt{\frac{2\pi}{x_{a+1}}}[-\frac{5}{2x^2_{a+1}}+O(\frac{1}{a^3}) ],$$
 $$ \int_{Q_a}\tau^4 e^{-\phi_a(t)}d\tau= \sqrt{\frac{2\pi}{x_{a+1}}}[\frac{3}{x^2_{a+1}}+O(\frac{1}{a^3}) ],$$
  and 
 $$ \int_{Q_a}\tau^5 e^{-\phi_a(t)}d\tau= \sqrt{\frac{2\pi}{x_{a+1}}}[-\frac{35}{2x^3_{a+1}}+O(\frac{1}{a^3}) ].$$
 Notice that the reason that the relative error of $I_4$ is smaller is that 
 \begin{eqnarray*}
  \int_{-\infty}^{\infty}\tau^4 e^{-g_a(t)}dt&=& \int_{-\infty}^{\infty}\tau e^{-\phi_a(t)}(1-\Delta g(t)+O(\frac{(\log a)^{20}}{a^2}) )dt\\
  &=&(1+O(\frac{(\log a)^{8}}{a}))\int_{Q_a}\tau e^{-\phi_a(t)}dt.
\end{eqnarray*}
\end{proof}
Recall that $J_3=I_3+3\delta_aI_2-2\delta_a^3.$
Then since $I_2=\frac{1}{x_{a+1}}+O(\frac{(\log a)^8}{a^2})$, we have 
$$J_3=-\frac{1}{x^2_{a+1}}+O(\frac{(\log a)^8}{a^{5/2}}).$$
Namely, we have 
\begin{prop}\label{prop-lambda32}
  $$\frac{d^3}{da^3}\lambda(a)=-\frac{1}{x^2_{a+1}}+O(\frac{(\log a)^8}{a^{5/2}}).$$
\end{prop}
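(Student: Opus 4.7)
The plan is to derive Proposition \ref{prop-lambda32} as an immediate algebraic corollary of the preceding Proposition \ref{prop-j345} and Corollary \ref{cor-tt-2}, via the identity
$$J_3 = I_3 + 3\delta_a I_2 - 2\delta_a^3$$
recorded just after Proposition \ref{prop-lambda3}. Since $\lambda''(a) = J_2$ from \eqref{e-lam2}, differentiating once more in $a$ gives $\lambda'''(a) = J_3$ (the shift from differentiating the centered moment produces only a correction proportional to $\tilde t_a'$ times an odd moment, which vanishes at the true mean). Thus it suffices to compute $J_3$ to accuracy $O(\tfrac{(\log a)^8}{a^{5/2}})$.

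The key step is substitution. From Proposition \ref{prop-j345}, $I_3 = -\tfrac{5}{2x_{a+1}^2} + O(\tfrac{(\log a)^8}{a^{5/2}})$ and $I_2 = \tfrac{1}{x_{a+1}} + O(\tfrac{(\log a)^8}{a^2})$ (the latter coming already out of Proposition \ref{prop-lambda2-2}). From Corollary \ref{cor-tt-2}, $\delta_a = t_{a+1} - \tilde t_a = \tfrac{1}{2 x_{a+1}} + O(\tfrac{(\log a)^8}{a^{3/2}})$. Multiplying,
$$3\delta_a I_2 = \frac{3}{2 x_{a+1}^2} + O\!\left(\frac{(\log a)^8}{a^{5/2}}\right),$$
while $2\delta_a^3 = \tfrac{1}{4 x_{a+1}^3} + O(\tfrac{(\log a)^8}{a^{7/2}}) = O(a^{-3})$, which is absorbed by the target error. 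Summing the three contributions,
$$J_3 = -\frac{5}{2 x_{a+1}^2} + \frac{3}{2 x_{a+1}^2} + O\!\left(\frac{(\log a)^8}{a^{5/2}}\right) = -\frac{1}{x_{a+1}^2} + O\!\left(\frac{(\log a)^8}{a^{5/2}}\right),$$
which is exactly the asserted estimate for $\lambda'''(a)$.

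There is essentially no obstacle: this proposition is a bookkeeping step that combines ingredients already proved at the required sharpness. The only point requiring minor care is tracking that the cross-term $3\delta_a I_2$ contributes a genuine non-cancelling piece $\tfrac{3}{2 x_{a+1}^2}$ (rather than being folded into the error), so that the cancellation against $I_3$ leaves the clean leading term $-\tfrac{1}{x_{a+1}^2}$; and that each error, when multiplied by the relevant factor of $\delta_a = O(a^{-1})$ or $I_2 = O(a^{-1})$, still fits inside $O(\tfrac{(\log a)^8}{a^{5/2}})$. Both checks are direct.
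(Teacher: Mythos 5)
Your proposal is correct and follows the paper's own argument exactly: the paper also derives the result from the identity $J_3 = I_3 + 3\delta_a I_2 - 2\delta_a^3$ together with the estimates $I_3 = -\tfrac{5}{2x_{a+1}^2} + O(\tfrac{(\log a)^8}{a^{5/2}})$, $I_2 = \tfrac{1}{x_{a+1}} + O(\tfrac{(\log a)^8}{a^2})$, and $\delta_a = \tfrac{1}{2x_{a+1}} + O(\tfrac{(\log a)^8}{a^{3/2}})$. The only difference is that you supply a bit more bookkeeping on the cross-terms and a brief (correct) justification for $\lambda'''(a) = J_3$, which the paper takes as already established.
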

Since
\begin{eqnarray*}
  J_4&=&I_4+4\delta_a I_3+6\delta_a^2I_2-3\delta_a^4\\
  &=&\frac{3}{x^2_{a+1}}+O(\frac{(\log a)^8}{a^{3}}),
\end{eqnarray*}
we have 
$$J_4-3J_2^2=O(\frac{(\log a)^8}{a^{3}}),$$
namely,
\begin{prop}\label{prop-lambda4}
  $$\frac{d^4}{da^4}\lambda(a)=O(\frac{(\log a)^8}{a^{3}}).$$
\end{prop}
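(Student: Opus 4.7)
The plan is to recognize $\lambda^{(4)}(a)$ as the fourth cumulant of the probability measure $dx/h_a(x)$ on $(0,\infty)$ pushed forward under $\log x$, then combine the asymptotic of $J_4$ just obtained in Proposition \ref{prop-j345} with the refined asymptotic of $\lambda''(a)=J_2$ from Theorem \ref{theo-log9-a}. The saving that yields $O((\log a)^8/a^3)$ comes entirely from a cancellation of leading $3/x_{a+1}^2$ terms, which is precisely the content of the cumulant identity.

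More concretely, set $M(a)=\int_0^\infty x^a/f(x)\,dx$, so $\lambda(a)=\log M(a)$ and $M^{(k)}(a)/M(a)=\int(\log x)^k\,dx/h_a(x)$ is the $k$th raw moment. A direct chain-rule expansion of $(\log M)^{(4)}$ followed by re-centering about $\tilde t_a=\lambda'(a)$ (so that the first central moment vanishes) gives the standard identity
\begin{equation*}
\lambda^{(4)}(a)=J_4-3J_2^2.
\end{equation*}
This is the one algebraic step to verify by hand; everything after it is bookkeeping.

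Then I plug in the inputs that are already in place. From Proposition \ref{prop-j345}, $J_4=3/x_{a+1}^2+O((\log a)^8/a^3)$. From Theorem \ref{theo-log9-a}, $J_2=\lambda''(a)=1/x_a+O((\log a)^8/a^2)$, and since $x_{a+1}=x_a+1+o(1)$, one may replace $x_a$ by $x_{a+1}$ up to an $O(1/a^3)$ correction, giving $J_2=1/x_{a+1}+O((\log a)^8/a^2)$. Squaring yields $3J_2^2=3/x_{a+1}^2+O((\log a)^8/a^3)$, and the difference $J_4-3J_2^2$ kills the leading $3/x_{a+1}^2$, leaving precisely $O((\log a)^8/a^3)$.

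The main obstacle is not any new analytic estimate; it is ensuring that the relative error $(\log a)^8/a$ obtained in Theorem \ref{theo-log9-a} is just fine enough that, after squaring $J_2$ and subtracting from $J_4$, both of which equal $3/x_{a+1}^2$ to leading order, the residual error stays of the desired order. In other words, the sections \ref{sec-looseends} and \ref{sec-5} were carried out precisely to bring the relative errors down to the $(\log a)^8/a$ level which the fourth-cumulant cancellation can absorb; once that has been secured, Proposition \ref{prop-lambda4} is immediate from the identity above.
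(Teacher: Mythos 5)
Your proof is essentially the paper's proof: both rely on the cumulant identity $\lambda^{(4)}(a)=J_4-3J_2^2$ and then cancel the leading $3/x_{a+1}^2$ contributions using the refined moment asymptotics established in Section \ref{sec-5}. One small imprecision to flag: Proposition \ref{prop-j345} gives $I_4$ (moments centered at $t_{a+1}$), not $J_4$ (centered at $\tilde t_a=\lambda'(a)$); the paper explicitly performs the recentering $J_4=I_4+4\delta_a I_3+6\delta_a^2 I_2-3\delta_a^4$ and notes, using Corollary \ref{cor-tt-2}, that the shift terms are $O(1/a^3)$, hence absorbed into the error. Likewise, replacing $1/x_a$ by $1/x_{a+1}$ in $J_2$ costs $O(1/a^2)$ rather than the $O(1/a^3)$ you claim, but since you then square $J_2$, the final error $O((\log a)^8/a^3)$ is unaffected. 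So your outline reaches the correct bound by the same route; the one step you should not omit is the $I_4\to J_4$ shift.
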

And since
\begin{eqnarray*}
  J_5&=&I_5+5\delta_a I_4+10\delta_a^2I_3+10\delta^3I_2-4\delta_a^5\\
  &=&-\frac{35}{2x^3_{a+1}}+5(\frac{1}{2x_{a+1}}+O(\frac{(\log a)^8}{x_a^{3/2}}))(\frac{3}{x^2_{a+1}}+O(\frac{(\log a)^8}{a^{3}})) +O(\frac{(\log a)^8}{a^{7/2}})\\
  &=&-\frac{10}{x^3_{a+1}}+O(\frac{(\log a)^8}{a^{7/2}})
\end{eqnarray*}
Therefore,
$$J_5-10J_2J_3=O(\frac{(\log a)^8}{a^{7/2}}),$$
namely,
\begin{prop}\label{prop-lambda5}
  $$\frac{d^5}{da^5}\lambda(a)=O(\frac{(\log a)^8}{a^{7/2}}).$$
\end{prop}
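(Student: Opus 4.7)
The plan is to identify $\lambda^{(5)}(a)$ with a specific polynomial in the central moments $J_2,J_3,J_5$ of $\log x$ under the weighted probability measure $p_a(dx)=c(a)\,x^a f(x)^{-1}\,dx$, and then to quote the cancellation that has just been computed in the display preceding the proposition.

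Since $e^{\lambda(a)}=\int_0^\infty x^a/f(x)\,dx$, the function $s\mapsto \lambda(a+s)-\lambda(a)$ is the cumulant generating function of $\log x$ under $p_a$. For $k\geq 2$, therefore, $\lambda^{(k)}(a)$ is the $k$-th cumulant of $\log x$ under $p_a$; since $\tilde t_a=\lambda'(a)=E_a[\log x]$, the quantity $J_k$ as defined earlier is exactly the $k$-th central moment $E_a[(\log x-\tilde t_a)^k]$ under $p_a$. The classical moment-to-cumulant inversion at $k=5$ reads
$$\lambda^{(5)}(a)=J_5-10J_2J_3,$$
an identity which one also verifies directly by differentiating $\lambda(a)=\log\int x^a/f(x)\,dx$ five times and regrouping.

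The preceding analysis already supplies $J_2=1/x_{a+1}+O((\log a)^8/a^2)$ from Theorem~\ref{theo-log9-a}, $J_3=-1/x_{a+1}^2+O((\log a)^8/a^{5/2})$ from Proposition~\ref{prop-lambda32}, and $J_5=-10/x_{a+1}^3+O((\log a)^8/a^{7/2})$ from the display built on Proposition~\ref{prop-j345}. Multiplying, $10J_2J_3=-10/x_{a+1}^3+O((\log a)^8/a^{7/2})$, so the leading $-10/x_{a+1}^3$ contributions in $J_5$ and in $10J_2J_3$ cancel exactly, leaving $\lambda^{(5)}(a)=J_5-10J_2J_3=O((\log a)^8/a^{7/2})$.

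The one conceptual point worth flagging is that the leading order of $J_5$ is by itself two orders larger than the target error, so the proposition is forced on us only through an algebraic cancellation inside the cumulant-moment identity. This cancellation is automatic from the near-Gaussian character of $p_a$, under which every cumulant of order $\geq 3$ is much smaller than the corresponding central moment; accordingly, no analytic work beyond Section~\ref{sec-5} and Proposition~\ref{prop-j345} is needed to close the argument, and the main obstacle is simply arranging the estimates of $J_2$, $J_3$ and $J_5$ each with a sufficiently small \emph{relative} error so that the cancellation of the leading $1/x_{a+1}^3$ terms falls inside the $O((\log a)^8/a^{7/2})$ budget.
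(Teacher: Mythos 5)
Your argument is correct and follows the paper's own derivation in essentially the same way: the paper obtains $J_5=-10/x_{a+1}^3+O((\log a)^8/a^{7/2})$ from the $I_i$ asymptotics of Proposition~\ref{prop-j345}, and then invokes precisely the moment–cumulant identity $\lambda^{(5)}(a)=J_5-10J_2J_3$ together with the leading terms of $J_2$ and $J_3$ to see the exact cancellation of the $-10/x_{a+1}^3$ contribution. Your explicit remark that this identity is the fifth-order cumulant formula (the paper uses it silently) and that the cancellation is forced by the near-Gaussian character of $p_a$ is a helpful gloss but not a different route.
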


\subsection{tighening the second loose end for odd degrees.}
For $j=1,3,5$, we define a function on $\R$
$$\zeta_{a,j}(i)=
\begin{cases}
\frac{c(i)}{c(a)}x_a^{i-a}(i-n_{x_a})^j& \textbf{if } i>-1\\
0& \textbf{otherwise}
\end{cases}
$$ 
Then let $\hat{\zeta}_{a,j}(y)$ be the Fourier transform of $\zeta_{a,j}$. Then we can extend the proof of proposition \ref{prop-ghat} to prove the following:
\begin{prop}\label{prop-gjhat}
  For $j=1,3,5$ and $|y|\geq 1$, we have
  $$|\hat{\zeta}_{a,j}(y)|\leq \frac{\epsilon(a)}{y^2}+\frac{1}{|y|^3}O(a^{-2+j/2}),$$
  where the $\epsilon(a)$ term and the $O(a^{-2+j/2})$ term can be made independent of $y$.
\end{prop}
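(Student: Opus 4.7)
The plan is to extend the proof of Proposition \ref{prop-ghat} by integrating by parts three times in
$$\hat{\zeta}_{a,j}(y) = \int_{-1}^{\infty} e^{-2\pi i y z}\zeta_{a,j}(z)\,dz.$$
With $\Lambda = (2\pi i y)^{-1}$, this yields boundary contributions at $i = -1$ from $\zeta^{(k)}_{a,j}$ ($k = 0, 1, 2$) together with $\Lambda^3\int e^{-2\pi i y z}\zeta^{(3)}_{a,j}(z)\,dz$. The first boundary term vanishes because $\zeta_{a,j}(-1) = (-1-n_{x_a})^j\zeta_a(-1) = 0$. For $k = 1, 2$, the Leibniz formula $\zeta^{(k)}_{a,j} = \sum_\ell\binom{k}{\ell}(v^j)^{(\ell)}\zeta^{(k-\ell)}_a$ combines the bound $|\zeta^{(\ell)}_a(-1)| \leq C_\ell(\log x_a)^\ell x_a^{-1-a}/c(a) = \epsilon(a)$ already established inside the proof of Proposition \ref{prop-ghat} with $|(-1-n_{x_a})^{j-\ell}| = O(a^j)$, giving $|\zeta^{(k)}_{a,j}(-1)| = \epsilon(a)$. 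This supplies the $\epsilon(a)/y^2$ term in the claim (the $\epsilon(a)/|y|^3$ piece being absorbed for $|y| \geq 1$).

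I would then split the remaining integral at $i = a/2$. The interval $[-1, a/2]$ contributes only $\epsilon(a)$: the pointwise estimate $|\zeta^{(k)}_a(i)| \leq C(\log x_a)^k\zeta_a(i)$ (argued exactly as in the $k = 3$ case of Proposition \ref{prop-ghat}) combined with the mass-concentration bound $\int_{-1}^{a/2}|v|^m\zeta_a(i)\,di = \epsilon(a)$ from Remark \ref{rmk-1} covers all pieces of the Leibniz expansion multiplied by the extra $v^j$ factor. On $[a/2,\infty)$ I use the expansion
$$\zeta^{(3)}_{a,j} = v^j\zeta^{(3)}_a + 3jv^{j-1}\zeta''_a + 3j(j-1)v^{j-2}\zeta'_a + j(j-1)(j-2)v^{j-3}\zeta_a,$$
rewrite $\zeta^{(k)}_a = \zeta_a\cdot P_k(\mu',\mu'',\mu''')$, and substitute the sharpened asymptotics $\mu'(i) = -v/x_a + O(v^2/a^2)$, $\mu''(i) = -x_a^{-1} + O((\log a)^8/a^2)$, $\mu'''(i) = x_a^{-2} + O((\log a)^8/a^{5/2})$ provided by Theorem \ref{theo-log9-a} and Propositions \ref{prop-lambda32}--\ref{prop-lambda5}. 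Collecting by power of $v$, the leading Gaussian-moment contributions cancel identically ($6 - 27 + 36 - 15 = 0$ for $j = 3$, with analogous identities $-3 + 6 - 3 = 0$ for $j = 1$ and $60 - 225 + 270 - 105 = 0$ for $j = 5$); this cancellation reflects the fundamental identity $\int_{a/2}^\infty\zeta^{(3)}_{a,j}\,di = -\zeta^{(2)}_{a,j}(a/2) = \epsilon(a)$. What remains is dominated by $x_a^{-2}\int v^j\zeta_a\,di\sim a^{-2+j/2}$ via the Gaussian moment scaling for odd $j$, which, combined with the $\Lambda^3$ prefactor, yields the $O(a^{-2+j/2})/|y|^3$ term.

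The main subtlety is that a naive absolute-value bound of the Leibniz expansion overshoots the target by a full factor of $a$, so the algebraic cancellation of the leading Gaussian-moment coefficients is essential, not decorative. Transferring this cancellation to the oscillatory integral (where the factor $e^{-2\pi i y z}$ is no longer trivial) is done by expressing each monomial $v^m/x_a^q\,\zeta_a$ as a linear combination of Fourier transforms $\hat{\zeta}_{a,m}(y)$ controlled by Proposition \ref{prop-ghat}, by the present proposition applied recursively at smaller odd indices, and by the even auxiliary case $m = 2$ (which obeys the same type of bound, as noted in the remark following Lemma \ref{lem-sum-integral}), with one further integration by parts of the form $\int e^{-2\pi iyz}v^{j-2}\zeta'_a\,dz = 2\pi iy\,\hat{\zeta}_{a,j-2}(y) - (j-2)\hat{\zeta}_{a,j-3}(y)$ available to handle the derivative-containing pieces. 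The three cases $j = 1, 3, 5$ are then settled in this order, with Proposition \ref{prop-ghat} as the base case.
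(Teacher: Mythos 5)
Your handling of the boundary contributions at $i=-1$ and of the tail integral over $[-1,a/2]$ is correct and matches what the paper leaves implicit when it says ``repeat the argument in the proof of proposition \ref{prop-ghat}.'' The observation that the naive absolute-value bound on $\int_{a/2}^\infty|\zeta^{(3)}_{a,j}|\,dz$ overshoots by a factor of $a$ is also accurate: for $j=1$, for instance, the term $v\cdot(\mu')^3\zeta_a$ alone contributes $a^{-3}\int\theta^4\zeta_a\sim a^{-1/2}$, not the required $a^{-3/2}$.

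The gap is in the step where you try to extract a sharper bound from the cancellation. The cancellation you identify is a statement about the unweighted integral $\int_{a/2}^\infty\zeta^{(3)}_{a,j}\,di=-\zeta^{(2)}_{a,j}(a/2)=\epsilon(a)$, i.e.\ a statement at $y=0$. It does not transfer to $\int_{a/2}^\infty e^{-2\pi i y z}\zeta^{(3)}_{a,j}\,dz$ for $|y|\geq 1$. Your proposed mechanism for transferring it --- writing the integrand as $\sum_m c_m v^m\zeta_a$ and bounding $\sum_m c_m\hat\zeta_{a,m}(y)$ --- does not close. For $j=1$ the expansion (via $(\mu')^3$) already produces a $v^4\zeta_a$ piece, so you need a bound on $\hat\zeta_{a,4}(y)$; for $j=3$ you need $\hat\zeta_{a,6},\hat\zeta_{a,8}$, and so on. None of these are furnished by Proposition \ref{prop-ghat}, the present proposition at smaller odd indices, or the $m=2$ remark, and a recursion to establish them would run into the same problem one step higher. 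The ``one further integration by parts'' you invoke, $\int e^{-2\pi iyz}v^{j-2}\zeta'_a\,dz = 2\pi iy\,\hat\zeta_{a,j-2}(y)-\cdots$, is in the wrong direction: lowering the derivative order introduces a factor of $y$ and makes the bound worse, not better.

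The missing idea --- and the one the paper actually uses --- is to integrate by parts \emph{two more times} on $[a/2,\infty)$ (raising the order, each step supplying a factor $(2\pi i y)^{-1}$). The new boundary terms $\zeta^{(3)}_{a,j}(a/2)$ and $\zeta^{(4)}_{a,j}(a/2)$ are $\epsilon(a)$ because $\zeta_a$ and all its relevant derivatives are exponentially small at $a/2$. What remains is $(\tfrac{i}{2\pi y})^2\int_{a/2}^\infty e^{-2\pi iyz}\zeta^{(5)}_{a,j}\,dz$, and here the blunt absolute-value bound already gives $O(a^{-2+j/2})$: e.g.\ for $j=1$ the largest terms of $\theta\zeta_a^{(5)}$ and $\zeta_a^{(4)}$, after the Fa\`a di Bruno expansion and the inputs $\mu'=O(\theta/a)$, $\mu''=O(1/a)$, $\mu^{(3)}=O(1/a^2)$, $\mu^{(4)}=O((\log a)^8/a^3)$, $\mu^{(5)}=O((\log a)^8/a^{7/2})$, each integrate to $O(a^{-3/2})$ with no cancellation required. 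In short, the two extra integrations by parts replace the delicate cancellation you are trying to use with a crude, term-by-term estimate that actually closes.
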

\begin{proof}
  We can repeat the argument in the proof of proposition \ref{prop-ghat} and find that it boils down to estimating the integral
  $$ K_j=\int_{\frac{a}{2}}^{\infty}e^{-2i\pi y z}\zeta^{(3)}_{a,j}(z)dz.$$
  We can then use integration by parts twice:
  \begin{eqnarray*}
    K_j&=& -\frac{i}{2\pi y}\zeta^{(3)}_{a,j}(\frac{a}{2})-\frac{i}{2\pi y}\int_{\frac{a}{2}}^{\infty}e^{-2i\pi y z}\zeta^{(4)}_{a,j}(z)dz\\
    &=&\frac{\epsilon(a)}{y} +(\frac{i}{2\pi y})^2\zeta^{(4)}_{a,j}(\frac{a}{2})+(\frac{i}{2\pi y})^2\int_{\frac{a}{2}}^{\infty}e^{-2i\pi y z}\zeta^{(5)}_{a,j}(z)dz\\
    &=&\frac{\epsilon(a)}{y}+(\frac{i}{2\pi y})^2\int_{\frac{a}{2}}^{\infty}e^{-2i\pi y z}\zeta^{(5)}_{a,j}(z)dz,
  \end{eqnarray*}
  where the $\epsilon(a)$-term can be controlled independent of $y$ and $j$.

  We denote by $\zeta_a(i)=e^{\mu(i)}$, $\theta =i-n_{x_a}$. When $j=1$, 
  $$ \zeta^{(5)}_{a,1}(z)=\theta \zeta^{(5)}_{a}+\zeta^{(4)}_{a},$$
  where 
  \begin{eqnarray*}
    \zeta^{(4)}_{a}=\mu^{(4)}+7\mu^{(3)}\mu'+3(\mu^{(2)})^2+6\mu^{(2)}(\mu')^2+(\mu')^4,
  \end{eqnarray*}
  and $$\zeta^{(5)}_{a}=\mu^{(5)}+8\mu^{(4)}\mu'+13\mu^{(3)}\mu^{(2)}+13\mu^{(3)}(\mu')^2+15(\mu^{(2)})^2\mu'+10\mu^{(2)}(\mu')^3
  +(\mu')^5$$
  Then since $\mu=-\lambda$, we have $\mu''(i)=-\lambda''(i)=O(\frac{1}{a})$, hence  
  $\mu'(i)=O(\frac{1}{a})\theta.$ And by propositions \ref{prop-lambda32}, \ref{prop-lambda4} and \ref{prop-lambda5}, we have $\mu^{(3)}=O(\frac{1}{a^2})$, $\mu^{(4)}=O(\frac{(\log a)^8}{a^{3}})$ and $\mu^{(5)}=O(\frac{(\log a)^8}{a^{7/2}})$. Then after a careful term by term estimation one sees that 
  $$\int_{\frac{a}{2}}^{\infty}e^{-2i\pi y z}\zeta^{(5)}_{a,1}(z)=O(\frac{1}{a^{3/2}}).$$
Similarly,  since $$ \zeta^{(5)}_{a,3}(z)=\theta^3 \zeta^{(5)}_{a}+3\theta^2 \zeta^{(4)}_{a}+6\theta \zeta^{(3)}_{a}+6\zeta^{(2)}_{a}, $$
we can again get estimate:
$$\int_{\frac{a}{2}}^{\infty}e^{-2i\pi y z}\zeta^{(5)}_{a,3}(z)=O(\frac{1}{a^{1/2}}).$$
 And since 
 $$ \zeta^{(5)}_{a,5}(z)=\theta^5 \zeta^{(5)}_{a}+5\theta^4 \zeta^{(4)}_{a}+20\theta^3 \zeta^{(3)}_{a}+60\theta^2\zeta^{(2)}_{a}+120\theta \zeta'_a+120g_a, $$
 we can again get an estimate:
 $$\int_{\frac{a}{2}}^{\infty}e^{-2i\pi y z}\zeta^{(5)}_{a,3}(z)=O(a^{1/2}).$$

\end{proof}

So by the Poisson summation formula, we have the following:
\begin{lem}\label{lem-sum-integral-odd}
  When $j=1,3,5$, we have
  $$\sum_{i=0}^\infty (i-n_{x_a})^j\zeta_a(i)=\int_{i=0}^\infty (i-n_{x_a})^j\zeta_a(i)+O(a^{-2+j/2}), $$
  for $a$ large enough, 
\end{lem}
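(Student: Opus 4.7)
The plan is to apply the Poisson summation formula exactly as was done in Lemma \ref{lem-sum-integral}, but now to the weighted function $\zeta_{a,j}$ rather than $\zeta_a$ itself. Since $\zeta_{a,j}$ vanishes for $i\leq -1$, the factor $(i-n_{x_a})^j$ does not spoil the rapid decay as $i\to+\infty$ inherited from $\zeta_a$, so $\zeta_{a,j}$ easily satisfies the decay hypothesis $|\zeta_{a,j}(x)|\leq A(1+|x|)^{-1-\delta}$. On the Fourier side, Proposition \ref{prop-gjhat} gives $|\hat{\zeta}_{a,j}(y)|=O(|y|^{-2})$ for $|y|\geq 1$, which is more than enough to satisfy the matching decay condition. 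Hence both hypotheses of the Poisson summation formula are in force.

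The central identity is therefore
\begin{equation*}
\sum_{c\in\Z}\zeta_{a,j}(c)=\sum_{\xi\in\Z}\hat{\zeta}_{a,j}(\xi).
\end{equation*}
On the left, since $\zeta_{a,j}(c)=0$ for integer $c\leq -1$, the sum reduces to $\sum_{i=0}^{\infty}(i-n_{x_a})^j\zeta_a(i)$, which is the quantity of interest. On the right, the $\xi=0$ term is $\hat{\zeta}_{a,j}(0)=\int_{-1}^{\infty}\zeta_{a,j}(y)\,dy$. The piece $\int_{-1}^{0}\zeta_{a,j}(y)\,dy$ contributes $\epsilon(a)$ (it involves $c(y)/c(a)$ which is of order $x_a^{-a}$ up to a polynomial in $a$, hence super-polynomially small), so up to $\epsilon(a)$ this $\xi=0$ term equals $\int_0^{\infty}(i-n_{x_a})^j\zeta_a(i)\,di$.

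It remains to estimate the nonzero frequencies. Applying Proposition \ref{prop-gjhat} term by term,
\begin{equation*}
\Bigl|\sum_{\xi\neq 0}\hat{\zeta}_{a,j}(\xi)\Bigr|\leq \epsilon(a)\sum_{\xi\neq 0}\frac{1}{\xi^2}+O(a^{-2+j/2})\sum_{\xi\neq 0}\frac{1}{|\xi|^3},
\end{equation*}
and both series are absolutely convergent constants (namely $2\zeta(2)$ and $2\zeta(3)$), so the total contribution is $O(a^{-2+j/2})$ for $j=1,3,5$. Combining this with the $\xi=0$ analysis yields the stated estimate.

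The only mildly delicate point is the bookkeeping between the sum $\sum_{i=0}^{\infty}$ and the integral $\int_0^{\infty}$: the Poisson formula naturally produces $\int_{-\infty}^{\infty}$, and one must check that the discrepancy arising from the interval $[-1,0]$ (where $\zeta_{a,j}$ is nonzero but there are no integer lattice points) is negligible. This is handled by the $\epsilon(a)$ bound noted above, using the same continuity and boundary behavior of $c(i)$ near $i=-1$ that was exploited in the proof of Proposition \ref{prop-ghat}. No new estimates on derivatives are required, since Proposition \ref{prop-gjhat} has already packaged all of the analytical content.
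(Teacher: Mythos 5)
Your proof is correct and follows exactly the route the paper intends: apply the Poisson summation formula to $\zeta_{a,j}$, identify the $\xi=0$ frequency with the integral up to the negligible $[-1,0]$ boundary piece, and bound the nonzero frequencies by summing the bound of Proposition \ref{prop-gjhat} over $\xi\neq 0$. The only cosmetic gap is that you do not explicitly note that $\zeta_{a,j}$ is continuous at $i=-1$ (needed for the pointwise Fourier inversion hypothesis of the paper's Poisson formula), but this is immediate since $c(i)\to 0$ as $i\to -1^+$ forces $\zeta_{a,j}(i)\to 0$ there.
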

As a direct application, we can prove:
\begin{prop}\label{prop-a2}
  We have 
  $$a-n_{x_a}=\frac12+O(\frac{(\log a)^8}{\sqrt{a}}).$$
\end{prop}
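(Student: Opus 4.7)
The plan is to use the same probabilistic identity that underlies the earlier estimate $a - n_{x_a} = O(\log a)$, but now refined with the help of the Poisson summation control of lemma \ref{lem-sum-integral-odd} and the sharpened derivative bounds from propositions \ref{prop-lambda32}, \ref{prop-lambda4}, \ref{prop-lambda5}. Starting from $a = \sum_{i} i \, \tau_{x_a}(i)$ one writes
$$a - n_{x_a} = \frac{\sum_{i=0}^\infty (i - n_{x_a})\,\zeta_a(i)}{\sum_{i=0}^\infty \zeta_a(i)}.$$
Apply lemma \ref{lem-sum-integral-odd} with $j=1$ to the numerator and lemma \ref{lem-sum-integral} to the denominator. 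Since $\int_0^\infty \zeta_a(i)\,di$ is proportional to $\sqrt{x_a}$ times a positive constant, the Poisson error $O(a^{-3/2})$ in the numerator is relatively $O(a^{-2})$, and is absorbed. Thus it suffices to evaluate the ratio of integrals to the claimed accuracy.

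Next, expand $G_a(i)$ around $\tilde a = n_{x_a}$. Since $G_a(\tilde a) = G_a'(\tilde a) = 0$,
$$G_a(\tilde a + \theta) = \tfrac12 \lambda''(\tilde a)\theta^2 + \tfrac16\lambda'''(\tilde a)\theta^3 + \tfrac{1}{24}\lambda^{(4)}(\tilde a)\theta^4 + \tfrac{1}{120}\lambda^{(5)}(\tilde a)\theta^5 + \cdots,$$
and theorem \ref{theo-log9-a} together with propositions \ref{prop-lambda32}--\ref{prop-lambda5} give
$$\lambda''(\tilde a) = \frac{1+O((\log a)^8/a)}{x_{\tilde a}}, \qquad \lambda'''(\tilde a) = -\frac{1+O((\log a)^8/\sqrt{a})}{x_{\tilde a}^2},$$
with $\lambda^{(4)}(\tilde a) = O((\log a)^8/a^3)$ and $\lambda^{(5)}(\tilde a) = O((\log a)^8/a^{7/2})$. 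Writing $e^{-G_a} = e^{-\theta^2/(2x_{\tilde a})}\bigl(1 - \tfrac{\lambda'''(\tilde a)}{6}\theta^3 + (\text{lower order})\bigr)$ and using the mass concentration of remark \ref{rmk-1} to extend integration to all of $\R$ up to an $\epsilon(a)$ error, a standard Laplace expansion gives
$$\int (i - \tilde a)\, e^{-G_a(i)}\,di \;=\; -\frac{\lambda'''(\tilde a)}{6}\int \theta^4\, e^{-\theta^2/(2x_{\tilde a})}\, d\theta \,+\, (\text{smaller}),$$
because all purely Gaussian odd moments vanish. Using $\int \theta^4 e^{-\theta^2/(2x_{\tilde a})}d\theta = 3x_{\tilde a}^2\sqrt{2\pi x_{\tilde a}}$, this yields $\tfrac12\sqrt{2\pi x_{\tilde a}}\bigl(1 + O((\log a)^8/\sqrt{a})\bigr)$. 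The denominator integral is $\sqrt{2\pi x_{\tilde a}}(1 + O((\log a)^8/a))$, and dividing gives $a - n_{x_a} = \tfrac12 + O((\log a)^8/\sqrt{a})$.

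The only step that takes any care is confirming the error bookkeeping: one must check that every subleading contribution, namely the $\theta^5$ term weighted by $\lambda^{(5)}$, the cross term $(\lambda''')(\lambda^{(4)})\theta^7$ from the second-order Laplace expansion, and the Poisson error itself, all produce relative corrections smaller than $(\log a)^8/\sqrt{a}$. The dominant error is forced by the relative error in the estimate of $\lambda'''(\tilde a)$ coming from proposition \ref{prop-lambda32}, which is precisely of size $(\log a)^8/\sqrt{a}$; everything else is easily shown to be of smaller order using the power of $a$ gained from each extra derivative of $\lambda$. This is the main obstacle, but it is a routine term-by-term verification analogous to the one already carried out in proposition \ref{prop-gjhat}.
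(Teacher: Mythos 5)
Your proposal is correct but takes a genuinely different route from the paper's proof. The paper applies the decomposition $G_a = \Phi_a + \Delta G$ introduced in subsection \ref{subsec-fls}, where $\Phi_a$ is the explicit auxiliary function with $\Phi_a'' = 1/x_i$ and $\Delta G = O((\log a)^8/a^2)\,j^2$; the cubic term that produces the leading $\tfrac12$ is then $\Phi_a^{(3)}(\tilde a) = D_3 = dx_a^{-1}/da$, evaluated via the controlled derivatives of $x_a$ as a function of $a$ (formula \ref{e-Phi-a}). You instead Taylor-expand $G_a$ directly at $\tilde a$ using the derivatives $\lambda^{(k)}(\tilde a)$, $k = 2,\dots,5$, and substitute the bounds from theorem \ref{theo-log9-a} and propositions \ref{prop-lambda32}--\ref{prop-lambda5}. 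Both approaches reduce to the same Laplace-type computation --- the nonzero odd moment is produced by the cubic term against the quartic Gaussian moment $\int\theta^4 e^{-\theta^2/(2x_{\tilde a})} = 3x_{\tilde a}^2\sqrt{2\pi x_{\tilde a}}$ --- and $\Phi_a^{(3)}(\tilde a)$ and $\lambda'''(\tilde a)$ agree to leading order, so both give $\tfrac12\sqrt{2\pi x_{\tilde a}}$ in the numerator. The paper's $\Phi_a$-device isolates the part of $G_a$ whose higher derivatives are under sharp control and pushes all the $\lambda''$-error into the uniformly small $\Delta G$; your direct expansion is shorter to state, at the cost of relying on the Taylor remainder being controlled by $\lambda^{(5)}$ (which the paper supplies). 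One bookkeeping remark: the $\lambda^{(5)}\theta^6$ contribution is \emph{not} of strictly smaller order as you claim --- $\lambda^{(5)} = O((\log a)^8/a^{7/2})$ against $\int\theta^6 e^{-\theta^2/(2x_a)} \sim 15x_a^3\sqrt{2\pi x_a}$ yields a relative correction of size $O((\log a)^8/\sqrt a)$, i.e.\ the same order as the error coming from $\lambda'''$ --- but it is still within the stated $O((\log a)^8/\sqrt a)$ budget, so the final bound holds.
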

\begin{proof}
 Since now we can use integrals to estimate the summations, we can mimic the proof of corollary \ref{cor-tt-2}. Recall that 
 $$a-n_{x_a}=\frac{\sum_{i=0}^{\infty }(i-n_{x_a})e^{-G_a(i)}}{\sum_{i=0}^{\infty }e^{-G_a(i)}}.$$
  Recall that we have defined $\Phi_a(i)$ on the interval $Q_a=[a-\sqrt{a}\log a,a+\sqrt{a}\log a]$ in subsection \ref{subsec-fls}.
 Then $\Delta G(i)=O(\frac{(\log a)^8}{a^2})(i-n_{x_a})^2$. Formula \ref{e-Phi-a} says 
 $$e^{-\Phi_a(i)}=e^{-\frac{1}{2x_{\tilde{a}}}j^2}[1-\frac{D_3}{6}j^3+O(\frac{(\log a)^8}{a})], $$
where $D_3=-\frac{1+O(\frac{\log a}{\sqrt{a}})}{x_a^2}$ and $j=i-n_{x_a}$.
Therefore $$\int_{Q_a}je^{-\Phi_a(i)}di=\sqrt{2\pi x_a}[\frac{1}{2}+O(\frac{(\log a)^8}{\sqrt{a}})].$$
So \begin{eqnarray*}
  \sum_{i=0}^{\infty }je^{-G_a(i)}&=&\int_{i=0}^{\infty }je^{-G_a(i)}di+O(\frac{1}{a^{3/2}})\\
  &=&\int_{Q_a}je^{-G_a(i)}di+O(\frac{1}{a^{3/2}})\\
  &=&\int_{Q_a}je^{-\Phi_a(i)}(1-O(\frac{(\log a)^8}{a^2})j^2)di+O(\frac{(\log a)^{20}}{a})\\
  &=&\sqrt{2\pi x_a}[\frac{1}{2}+O(\frac{(\log a)^8}{\sqrt{a}})]+(\log a)^8
\end{eqnarray*} 
Since $\sum_{i=0}^{\infty }e^{-G_a(i)}=\sqrt{2\pi x_a}(1+O(\frac{(\log a)^8}{a}))$, 
we get the conclusion.
\end{proof}

Recall that we have defined $K_i=\frac{\sum_{i=0}^{\infty}(i-n_{x_a})^j e^{-G_a(i)}}{\sum_{i=0}^{\infty} e^{-G_a(i)}}$ and $\sigma_a=n_{x_a}-a$. Then just as proposition \ref{prop-a2},  we can prove:
\begin{prop}\label{prop-g345}
  We have
  \begin{equation}\label{e-g3}
    K_3=\frac{5}{2}x_a+O(\sqrt{a}(\log a)^8),
  \end{equation}
  \begin{equation}\label{e-g4}
    K_4=3x_a^2+O((\log a)^8a),
  \end{equation}

\end{prop}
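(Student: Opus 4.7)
The plan is to run, essentially verbatim, the template used for Proposition \ref{prop-a2}, now applied to the higher moments $K_3$ and $K_4$. The four steps are: (i) replace the sum by an integral via Poisson summation, (ii) localize to $Q_a$ and swap $G_a$ for $\Phi_a$, (iii) Taylor-expand $e^{-\Phi_a}$ using formula \ref{e-Phi-a} and integrate Gaussian moments, (iv) divide by the denominator $\sum e^{-G_a}$.

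\textbf{Step 1 (Poisson).} For $K_3$, Lemma \ref{lem-sum-integral-odd} with $j=3$ gives
$$\sum_{i=0}^\infty (i-n_{x_a})^3 e^{-G_a(i)} = \int_0^\infty (i-n_{x_a})^3 e^{-G_a(i)}\,di + O(a^{1/2}).$$
For $K_4$ we need the even-weight analogue of Proposition \ref{prop-ghat} applied to $(i-n_{x_a})^4\zeta_a(i)$; since the behaviour at $i=-1$ is no worse than in the $j=2$ case of the remark following Lemma \ref{lem-sum-integral}, we obtain a relative error $O(a^{-3/2})$, i.e.\ an absolute error $O(1)$ after multiplying by the $O(x_a^{5/2})$ size of the integral.

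\textbf{Steps 2--3 (localize and expand).} Remark \ref{rmk-1} kills the integral outside $Q_a = [a-\sqrt{a}\log a,\,a+\sqrt{a}\log a]$ up to an $\epsilon(a)$ error. Theorem \ref{theo-general-a-log} sharpens the bound on $\Delta G = G_a-\Phi_a$ to $\Delta G(i) = O((\log a)^8/a^2)\,j^2$ with $j=i-n_{x_a}$, so $e^{-G_a}=e^{-\Phi_a}(1+O((\log a)^{10}/a))$ on $Q_a$. Formula \ref{e-Phi-a} gives $e^{-\Phi_a(i)} = e^{-j^2/(2x_{\tilde a})}[1-\tfrac{D_3}{6}j^3 + O((\log a)^8/a)]$ with $D_3 = -(1+O(\log a/\sqrt{a}))/x_a^2$ and $x_{\tilde a} = x_a + O(\log a)$. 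Using the moments $\int j^{2n}e^{-j^2/(2x)}dj = (2n-1)!!\,x^n\sqrt{2\pi x}$:
$$\int_{Q_a} j^3 e^{-\Phi_a}\,di = -\frac{D_3}{6}\int_{Q_a} j^6 e^{-j^2/(2x_{\tilde a})}\,dj \cdot (1+O((\log a)^8/a)) = \frac{5}{2}x_a\sqrt{2\pi x_a}\,(1+O((\log a)^8/\sqrt{a})),$$
the leading odd-Gaussian integral vanishing. Symmetrically, the $-\tfrac{D_3}{6}j^3$ correction pairs with $j^4$ to give zero by parity, leaving
$$\int_{Q_a} j^4 e^{-\Phi_a}\,di = 3 x_a^2 \sqrt{2\pi x_a}\,(1+O((\log a)^8/a)).$$

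\textbf{Step 4 (denominator).} By Theorem \ref{theo-log9-a} (or equivalently the $j=0$ case of our Poisson reduction), $\sum_{i=0}^\infty e^{-G_a(i)} = \sqrt{2\pi x_a}\,(1+O((\log a)^8/a))$. Dividing the two displays above by this yields
$$K_3 = \tfrac{5}{2}x_a + O(\sqrt{a}(\log a)^8), \qquad K_4 = 3x_a^2 + O(a(\log a)^8),$$
as claimed. The main (mild) obstacle is bookkeeping: one must verify that the three error sources — Poisson discretization, $\Delta G$-replacement, and Taylor truncation of $\Phi_a$ — each contribute at most the stated order. For $K_3$ the Poisson error $O(a^{1/2})$ becomes $O(1)$ after dividing by $\sqrt{2\pi x_a}$, so the dominant contribution is the $(\log a)^8/\sqrt{a}$ relative error coming from $D_3$ together with the gap between $x_{\tilde a}$ and $x_a$, controlled by Corollary \ref{cor-tt-2} and Theorem \ref{theo-log9-a}; for $K_4$ the key point is the parity cancellation of the cubic Taylor term in \ref{e-Phi-a}, without which the error would already be too large.
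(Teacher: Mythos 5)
The proposal takes essentially the same route as the paper: Poisson summation to replace sums by integrals, localize to $Q_a$, replace $G_a$ by $\Phi_a$, Taylor-expand via formula \eqref{e-Phi-a}, integrate Gaussian moments, and divide by the normalization. The structure and the key parity observation (the odd Taylor term $-\tfrac{D_3}{6}j^3$ pairs with $j^3$ but cancels with $j^4$) match the paper, and the conclusion does follow.

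There are, however, a few slips in the bookkeeping worth flagging. First, Lemma~\ref{lem-sum-integral-odd} at $j=3$ gives an absolute error $O(a^{-2+3/2})=O(a^{-1/2})$, not $O(a^{1/2})$; the $O(a^{1/2})$ figure you quote is the cruder Section~4 remark. Second, $O(a^{-3/2})\cdot O(x_a^{5/2})=O(a)$, not $O(1)$; this is still well within the $O(a(\log a)^8)$ tolerance, but the arithmetic is off. Third, and most substantively: by flattening $\Delta G(i)=O((\log a)^8/a^2)\,j^2$ to the uniform bound $e^{-G_a}=e^{-\Phi_a}(1+O((\log a)^{10}/a))$ on $Q_a$, you give away a factor of $(\log a)^2$ in the $K_4$ error. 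The paper keeps the $j$-dependence and computes $\int_{Q_a} j^4 e^{-\Phi_a}\cdot O\bigl(\tfrac{(\log a)^8}{a^2}\bigr)j^2\,di = O(a^{3/2}(\log a)^8)$, so that after dividing by $\sqrt{2\pi x_a}$ the error is $O(a(\log a)^8)$; your uniform bound would instead give $O(a(\log a)^{10})$, slightly weaker than the stated \eqref{e-g4}. None of this changes the overall strategy, but to actually obtain the stated powers of $\log a$ you need to integrate $\Delta G$ against the weight, not merely bound it uniformly.
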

\begin{proof}
  The proof is basically a combination of the proofs of propositions \ref{prop-j345} and \ref{prop-a2}. Denote by $j=i-n_{x_a} $.
  With the notations in the proof of corollary \ref{prop-a2}, we have 
 $$ \int_{Q_a}j^3 e^{-\Phi_a(i)}di= \sqrt{2\pi x_{a}}\frac{5}{2}x_a+O(a(\log a)^8).$$
 So we have proved equation \ref{e-g3}. 
 We can also calculate
 $$ \int_{Q_a}j^4 e^{-\Phi_a(i)}di= \sqrt{2\pi x_{a}}3x_a^2 +O(a^{3/2}(\log a)^8).$$
 And since
 $$\int_{Q_a}j^4 e^{-\Phi_a(i)}O(\frac{(\log a)^8}{a^2})j^2di=O(a^{3/2}(\log a)^8) ,$$ we have proved equation \ref{e-g4}.
\end{proof}
Recall $H_j=\frac{\sum_{i=0}^{\infty}(i-a)^j g_a(i)}{\sum_{i=0}^{\infty} g_a(i)}$ and we have 
$$\frac{d^3}{dt^3}\log f(x_a)=H_3,$$and
\begin{eqnarray*}
	\frac{d^4}{dt^4}\log f(x_a)=H_4-3H_2^2,
\end{eqnarray*}
Since $$K_2=H_2+\sigma_a^2=x_a+O((\log a)^8)$$
and
$$H_3=K_3+3\sigma_aG_2-2\sigma_a^3,$$
 we have 
$$H_3=x_a+O((\log a)^8a^{1/2}).$$
Namely, we have 
\begin{prop}\label{prop-lambda33}
  $$\frac{d^3}{dt^3}\log f(x_a)=x_a+O((\log a)^8a^{1/2}).$$
\end{prop}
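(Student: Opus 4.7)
The plan is to combine the third-moment transfer identity
\[
H_3 = K_3 + 3\sigma_a K_2 - 2\sigma_a^3
\]
(already recorded just before the proposition) with the three ingredient estimates available at this point in the paper, and then to cancel the leading-order terms carefully. Recall from equation \eqref{e-f3} that $\frac{d^3}{dt^3}\log f(x_a) = H_3$, so the claim really is an asymptotic formula for $H_3$.

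First I would collect the ingredients. From Proposition \ref{prop-g345}, $K_3 = \tfrac{5}{2}x_a + O(\sqrt{a}(\log a)^8)$. From the displayed identity preceding the proposition, $K_2 = H_2 + \sigma_a^2 = x_a + O((\log a)^8)$, where $H_2 = \frac{d^2}{dt^2}\log f(x_a)$ is controlled by Theorem \ref{theo-log9-a}. From Proposition \ref{prop-a2}, $\sigma_a = n_{x_a} - a = -\tfrac{1}{2} + O\!\bigl((\log a)^8/\sqrt{a}\bigr)$.

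Next I would substitute. The term $3\sigma_a K_2$ expands as
\[
3\sigma_a K_2 = 3\bigl(-\tfrac{1}{2}+O((\log a)^8/\sqrt{a})\bigr)\bigl(x_a+O((\log a)^8)\bigr) = -\tfrac{3}{2}x_a + O\!\bigl(\sqrt{a}(\log a)^8\bigr),
\]
where the dominant cross-term $O((\log a)^8/\sqrt{a})\cdot x_a = O(\sqrt{a}(\log a)^8)$ dictates the error rate. The term $-2\sigma_a^3$ is simply $\tfrac{1}{4}+O((\log a)^8/\sqrt{a})$, hence absorbed into the error. Adding the three contributions, the leading $\tfrac{5}{2}x_a$ from $K_3$ combines with the $-\tfrac{3}{2}x_a$ from $3\sigma_a K_2$ to leave $x_a$, and the constant $\tfrac{1}{4}$ is negligible:
\[
H_3 = \tfrac{5}{2}x_a - \tfrac{3}{2}x_a + O\!\bigl(\sqrt{a}(\log a)^8\bigr) = x_a + O\!\bigl(\sqrt{a}(\log a)^8\bigr).
\]

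There is essentially no obstacle here: the heavy lifting has already been done in Propositions \ref{prop-a2} and \ref{prop-g345}, which themselves rely on the Poisson-summation bound of Lemma \ref{lem-sum-integral-odd} and on the replacement $G_a \leadsto \tilde G_a$ from Section \ref{sec-looseends}. The only point requiring care is error bookkeeping in the product $\sigma_a K_2$, since multiplying a small correction in $\sigma_a$ by the large leading factor $x_a\sim a$ in $K_2$ is precisely what sets the final error size $\sqrt{a}(\log a)^8$; any looser estimate of $\sigma_a$ would spoil the bound. Given the inputs, the proposition follows by a single substitution.
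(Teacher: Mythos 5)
Your proposal is correct and follows exactly the paper's own argument: plug Proposition \ref{prop-a2} ($\sigma_a=n_{x_a}-a=-\tfrac12+O((\log a)^8/\sqrt a)$), Proposition \ref{prop-g345} ($K_3=\tfrac52 x_a+O(\sqrt a(\log a)^8)$), and $K_2=H_2+\sigma_a^2=x_a+O((\log a)^8)$ into the identity $H_3=K_3+3\sigma_aK_2-2\sigma_a^3$, and observe the $x_a$-cancellation and the dominant $\sqrt a(\log a)^8$ error. Your bookkeeping, including the remark that the cross-term in $3\sigma_aK_2$ and the error in $K_3$ are what set the final rate, matches the paper's (the paper's ``$3\sigma_a G_2$'' is a typo for $3\sigma_aK_2$).
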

And since
\begin{eqnarray*}
  H_4&=&K_4+4\sigma_a K_3+6\sigma_a^2K_2-3\sigma_a^4\\
  &=&3x^2_{a}+O(a(\log a)^8),
\end{eqnarray*}
we have 
$$H_4-3H_2^2=O(a(\log a)^8),$$
namely,
\begin{prop}\label{prop-lambda43}
  $$\frac{d^4}{dt^4}\log f(x_a)=O(a(\log a)^8).$$
\end{prop}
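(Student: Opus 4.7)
\medskip
\noindent\textbf{Proof proposal.}
The plan is to use the identity
$$\frac{d^4}{dt^4}\log f(x_a)=H_4-3H_2^2$$
displayed just above the proposition, and then to rewrite $H_4$ in terms of the centered moments $K_j$ (centered at $n_{x_a}$ rather than at $a$) and the shift $\sigma_a=n_{x_a}-a$, exactly as was done for $H_3$ in Proposition \ref{prop-lambda33}. Writing $i-a=(i-n_{x_a})+\sigma_a$ and applying the binomial theorem to $(i-a)^4$, one obtains an expansion of the form
$$H_4=K_4+4\sigma_a K_3+6\sigma_a^2 K_2+4\sigma_a^3 K_1+\sigma_a^4,$$
with an analogous (already-used) identity for $H_2$.

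Next I would plug in the estimates that have just been established. Proposition \ref{prop-g345} gives the crucial leading asymptotics $K_4=3x_a^2+O(a(\log a)^8)$ and $K_3=\frac{5}{2}x_a+O(\sqrt{a}(\log a)^8)$, while $K_2=x_a+O((\log a)^8)$ comes from $H_2=K_2-\sigma_a^2$ together with Theorem \ref{theo-log9-a}. Proposition \ref{prop-a2} yields $\sigma_a=-\frac{1}{2}+O((\log a)^8/\sqrt{a})$, so that the cross terms $4\sigma_a K_3$, $6\sigma_a^2 K_2$ and $4\sigma_a^3 K_1$ are all of size $O(x_a)=O(a)$, hence negligible relative to the error $O(a(\log a)^8)$ that we are permitted. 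This yields
$$H_4=3x_a^2+O(a(\log a)^8).$$

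Finally, Theorem \ref{theo-log9-a} gives $H_2=x_a+O((\log a)^8)$, so $H_2^2=x_a^2+O(x_a(\log a)^8)=x_a^2+O(a(\log a)^8)$, using that $x_a\asymp a$ (which follows from $\frac{dx_a}{da}=1+O((\log a)^8/a)$). Subtracting, the leading terms $3x_a^2$ cancel exactly and we obtain $H_4-3H_2^2=O(a(\log a)^8)$, which is the claim.

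The proof is essentially a bookkeeping computation: the genuinely hard work has already been done in Proposition \ref{prop-g345} (the sharp moment estimates obtained via the replacement $G_a\rightsquigarrow\Phi_a\rightsquigarrow\Psi_a$ and the Poisson-summation refinement of Lemma \ref{lem-sum-integral-odd}). The only thing one has to check carefully is that the exact leading term $3x_a^2$ of $K_4$ matches the exact leading term $3x_a^2$ of $3K_2^2$ so that the $x_a^2$ cancellation really does happen to top order; this is the standard fact that for a measure that is Gaussian to the required order the fourth centered moment equals $3$ times the square of the second centered moment, and it is precisely the source of the $(2n-1)!!$ combinatorial factors used in Proposition \ref{prop-j345}. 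No other obstacle is anticipated.
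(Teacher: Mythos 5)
Your proposal is correct and follows the paper's own argument: expand $H_4$ in the centered moments $K_j$ via $i-a=(i-n_{x_a})+\sigma_a$, plug in the estimates of Propositions \ref{prop-g345} and \ref{prop-a2} together with Theorem \ref{theo-log9-a}, and observe the top-order cancellation $K_4\approx 3K_2^2$. The only cosmetic difference is that you keep the $4\sigma_a^3K_1+\sigma_a^4$ terms explicit, whereas the paper has already substituted $K_1=-\sigma_a$ to write $-3\sigma_a^4$; these are algebraically identical.
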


\section{Preparation for the ending}\label{sec-6}
From now on, for reasons that will be justified later, we redefine $$\nu(a)=\frac{h_a(x_a)}{\sqrt{x_a+\frac{1}{6}}}.$$
We also redefine $\Phi_a(t)$ with the following conditions: 
$$\begin{cases}
  \Phi_a(n_{x_a})=\phi'_a(n_{x_a})=0;\\
  \Phi_a''(i)=\frac{1}{x_i+\frac{1}{2}}.
\end{cases}$$
We also redefine 
$$M_1''(b)=\inf\{(x_a+\frac{1}{2})\lambda''(a)|a>b-\sqrt{b}\log b \} ,$$ and 
$$M_2''(b)=\sup \{(x_a+\frac{1}{2})\lambda''(a)|a>b-\sqrt{b}\log b \}, $$
hence redefining $M_1(b)$ and $M_2(b)$.
\subsection{For $\Phi_a$ and $\tilde{G}_a$.}
Now we are ready to calculate the higher derivatives of $x_a$ as a function of $a$. 

We have $$\frac{dx_a}{da}=\frac{x_a}{\frac{d^2}{dt^2}\log f(x_a)}=1+O(\frac{(\log a)^8}{a}), $$
and
\begin{eqnarray*}
  \frac{d^2x_a}{da^2}&=&\frac{dx_a}{da}(\frac{d^2}{dt^2}\log f(x_a))^{-1}-(\frac{d^2}{dt^2}\log f(x_a))^{-2}x_a \frac{d^3}{dt^3}\log f(x_a) \frac{dt_a}{da}\\
  &=&\frac{dx_a}{da}(\frac{d^2}{dt^2}\log f(x_a))^{-1}-(\frac{d^2}{dt^2}\log f(x_a))^{-3}x_a \frac{d^3}{dt^3}\log f(x_a)\\
  &=&O(\frac{(\log a)^8}{a^{3/2}}) .
\end{eqnarray*}
So, \begin{eqnarray*}
  \frac{d^3x_a}{da^3}&=&\frac{d^2x_a}{da^2}(\frac{d^2}{dt^2}\log f(x_a))^{-1}-2\frac{dx_a}{da}(\frac{d^2}{dt^2}\log f(x_a))^{-3} \frac{d^3}{dt^3}\log f(x_a) \\
  &&+3x_a(\frac{d^2}{dt^2}\log f(x_a))^{-4} (\frac{d^3}{dt^3}\log f(x_a))^2 \frac{dt_a}{da}\\
  &&-(\frac{d^2}{dt^2}\log f(x_a))^{-3}x_a \frac{d^4}{dt^4}\log f(x_a)\frac{dt_a}{da}    \\
  &=&O(\frac{(\log a)^8}{a^{3/2}})\frac{1}{x_a}-\frac{2}{x_a^2}(1+O(\frac{(\log a)^8}{\sqrt{a}}))+ \frac{3}{x_a^2}(1+O(\frac{(\log a)^8}{\sqrt{a}}))\\
  &&-\frac{1}{x_a^3}O(a(\log a)^8)\\
  &=&O(\frac{(\log a)^8}{a^{2}})
\end{eqnarray*}
Therefore \begin{eqnarray*}
  \frac{d^2x^{-1}_a}{da^2}&=&-\frac{\frac{d^2x_a}{da^2}}{x_a^2}+2\frac{(\frac{dx_a}{da})^2}{x_a^3}\\
  &=&O(\frac{\log a}{a^3})
\end{eqnarray*}

So we have the following proposition.
\begin{prop}
  For $i\in Q_a$, we have $$\Phi_a^{(3)}(i)=-\frac{1}{x_i^2}(1+O(\frac{(\log a)^8}{a})),$$
$$\Phi_a^{(4)}(i)=\frac{2}{x_i^3}+O(\frac{(\log a)^8}{a^{7/2}}),$$
and $$\Phi_a^{(5)}(i)=O(\frac{(\log a)^8}{a^4}).$$
\end{prop}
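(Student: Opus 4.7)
The proposition is obtained by direct differentiation of $\Phi_a''(i)=\frac{1}{x_i+\frac{1}{2}}$, feeding in the bounds for $\frac{dx_a}{da}$, $\frac{d^2 x_a}{da^2}$ and $\frac{d^3 x_a}{da^3}$ that were just derived immediately above the proposition. The plan is to compute $\frac{d^k}{di^k}\bigl(\tfrac{1}{x_i+1/2}\bigr)$ for $k=1,2,3$ via the quotient rule, and then collect error terms of the correct order.

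For $\Phi_a^{(3)}(i)$, I would write
\begin{equation*}
\Phi_a^{(3)}(i)=\frac{d}{di}\frac{1}{x_i+\frac12}=-\frac{x_i'}{(x_i+\frac12)^2}.
\end{equation*}
Since $x_i'=1+O\!\bigl(\tfrac{(\log a)^8}{a}\bigr)$ and $(x_i+\frac12)^{-2}=x_i^{-2}\bigl(1+O(1/x_i)\bigr)$, and $i\in Q_a$ so $x_i\asymp a$, the first estimate follows.

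For $\Phi_a^{(4)}(i)$, the quotient rule gives
\begin{equation*}
\Phi_a^{(4)}(i)=-\frac{x_i''}{(x_i+\frac12)^2}+\frac{2\,(x_i')^2}{(x_i+\frac12)^3}.
\end{equation*}
The first summand is bounded by $O\!\bigl(\tfrac{(\log a)^8}{a^{3/2}}\bigr)\cdot O(1/a^2)=O\!\bigl(\tfrac{(\log a)^8}{a^{7/2}}\bigr)$ using $x_i''=O\!\bigl(\tfrac{(\log a)^8}{a^{3/2}}\bigr)$. Using $(x_i')^2=1+O\!\bigl(\tfrac{(\log a)^8}{a}\bigr)$ and $(x_i+\frac12)^{-3}=x_i^{-3}+O(1/a^4)$, the second summand becomes $\frac{2}{x_i^3}+O\!\bigl(\tfrac{(\log a)^8}{a^{4}}\bigr)\subset \frac{2}{x_i^3}+O\!\bigl(\tfrac{(\log a)^8}{a^{7/2}}\bigr)$, giving the second estimate. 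For $\Phi_a^{(5)}(i)$, differentiating once more yields
\begin{equation*}
\Phi_a^{(5)}(i)=-\frac{x_i'''}{(x_i+\frac12)^2}+\frac{6\,x_i'x_i''}{(x_i+\frac12)^3}-\frac{6\,(x_i')^3}{(x_i+\frac12)^4}.
\end{equation*}
Using $x_i'''=O\!\bigl(\tfrac{(\log a)^8}{a^{2}}\bigr)$ the first term is $O\!\bigl(\tfrac{(\log a)^8}{a^{4}}\bigr)$; the middle term is $O\!\bigl(\tfrac{(\log a)^8}{a^{9/2}}\bigr)$; and the last term is $O(1/a^4)\subset O\!\bigl(\tfrac{(\log a)^8}{a^{4}}\bigr)$, yielding the third estimate.

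There is no real obstacle beyond careful bookkeeping: the result is simply a mechanical consequence of the three preceding derivative estimates of $x_a$. The only mild subtlety is making sure that the replacement $x_i+\tfrac12\mapsto x_i$ inside powers is absorbed into the announced error terms, which is immediate since $1/x_i=O(1/a)$ is dwarfed by $(\log a)^8/a$ at every step.
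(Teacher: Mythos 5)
Your argument is correct and is essentially identical to the paper's: the paper likewise writes out $\Phi_a^{(5)}$ as the same three-term quotient-rule expression and plugs in the derivative bounds for $x_a$ derived just above (it treats the first two formulas as immediate, just as you do). Your bookkeeping of the error orders and of the $x_i+\tfrac12 \mapsto x_i$ replacement is accurate, so nothing is missing.
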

\begin{proof}
  Only the last formula needs a little explanation. We have \begin{eqnarray*}
    \Phi_a^{(5)}(i)&=&\frac{-6}{(x_i+\frac{1}{2})^4}(\frac{dx_i}{di})^3+\frac{6}{(x_i+\frac{1}{2})^3}\frac{dx_i}{di}\frac{d^2x_i}{di^2}-\frac{1}{(x_i+\frac{1}{2})^2}\frac{d^3x_i}{di^3}\\
    &=&O(\frac{(\log a)^8}{a^4})
  \end{eqnarray*}
\end{proof}
Also by proposition \ref{prop-a2}, $$x_a-x_{\tilde{a}}=\frac{1}{2}+O(\frac{(\log a)^8}{\sqrt{a}}),$$where $\tilde{a}=n_{x_a}$.
So $$\Phi_a''(\tilde{a})=x_a+O(\frac{(\log a)^8}{\sqrt{a}}).$$

Denote by $D_k=\Phi_a^{(k)}(\tilde{a})$, we can expand
$$\Phi_a(i)=\frac{D_2}{2}j^2+\frac{D_3}{6}j^3+\frac{D_4}{24}j^4+O(\frac{(\log a)^8}{a^4}j^5), $$and 
\begin{equation}\label{e-Phi-a-2}
  e^{-\Phi_a(i)}=e^{-\frac{D_2}{2}j^2}[1-\frac{D_3}{6}j^3-\frac{D_4}{24}j^4+\frac{D_3^2}{72}j^6+O(\frac{(\log a)^{13}}{a^{3/2}})] 
\end{equation}

 So \begin{eqnarray*}
  \int_{Q_a} e^{-\Phi_a(i)}di&=& \int_{Q_a}e^{-\frac{D_2}{2}j^2}[1-\frac{D_3}{6}j^3-\frac{D_4}{24}j^4+\frac{D_3^2}{72}j^6+O(\frac{(\log a)^{13}}{a^{3/2}})]di\\
  &=&(1+O(\frac{(\log a)^{13}}{a^{3/2}}))\int_{Q_a}e^{-\frac{D_2}{2}j^2}[1-\frac{D_4}{24}j^4+\frac{D_3^2}{72}j^6]di\\
  &=&(1+O(\frac{(\log a)^{13}}{a^{3/2}}))\sqrt{\frac{2\pi}{D_2}}[1-\frac{3D_4}{24D_2^2}+\frac{15D_3^2}{72D_2^3}]\\
  &=&(1+O(\frac{(\log a)^{13}}{a^{3/2}}))\sqrt{2\pi x_a}[1-\frac{1}{4x_a}+\frac{5}{24x_a}]\\
  &=&(1+O(\frac{(\log a)^{13}}{a^{3/2}}))\sqrt{2\pi x_a}[1-\frac{1}{24x_a}]
 \end{eqnarray*}

Similarly, \begin{eqnarray*}
  \int_{Q_a} j^2e^{-\Phi_a(i)}di&=& (1+O(\frac{(\log a)^{13}}{a^{3/2}}))\int_{Q_a}j^2e^{-\frac{D_2}{2}j^2}[1-\frac{D_4}{24}j^4+\frac{D_3^2}{72}j^6]di\\
  &=&(1+O(\frac{(\log a)^{13}}{a^{3/2}}))\sqrt{\frac{2\pi}{D_2}}[\frac{1}{D_2}-\frac{15D_4}{24D_2^3}+\frac{105D_3^2}{72D_2^4}]\\
  &=&(1+O(\frac{(\log a)^{13}}{a^{3/2}}))\sqrt{2\pi x_a}x_a[1-\frac{5}{4x_a}+\frac{35}{24x_a}]\\
  &=&(1+O(\frac{(\log a)^{13}}{a^{3/2}}))\sqrt{2\pi x_a}x_a[1+\frac{5}{24x_a}]
 \end{eqnarray*}
Therefore, since we now have $$\Delta G(i)=O(\frac{(\log a)^{8}}{a^{2}})(i-\tilde{a})^2,$$
\begin{eqnarray*}
  \sum j^2e^{-G_a(i)}&=& (1+O(\frac{\log a}{a^{3/2}}))\int_{Q_a}j^2e^{-G_a(i)}di\\
  &=&(1+O(\frac{\log a}{a^{3/2}}))\int_{Q_a}j^2e^{-\Phi_a(i)}(1-\Delta G(i))di\\
  &=&(1+O(\frac{(\log a)^{13}}{a^{3/2}}))[\sqrt{2\pi x_a}x_a(1+\frac{5}{24x_a})-\int_{Q_a}j^2e^{-\Phi_a(i)}\Delta G(i)di]
 \end{eqnarray*}
And similarly, $$\sum e^{-G_a(i)}=(1+O(\frac{(\log a)^{13}}{a^{3/2}}))[\sqrt{2\pi x_a}(1-\frac{1}{24x_a})-\int_{Q_a}e^{-\Phi_a(i)}\Delta G(i)di].$$
Therefore $$\frac{\sum j^2e^{-G_a(i)}}{\sum e^{-G_a(i)}}=(1+O(\frac{(\log a)^{13}}{a^{3/2}}))x_a[1+\frac{1}{4x_a}-\kappa_1+\kappa_2],$$
where $\kappa_1= \frac{1}{\sqrt{2\pi x_a}x_a}\int_{Q_a}j^2e^{-\Phi_a(i)}\Delta G(i)di$, $\kappa_2= \frac{1}{\sqrt{2\pi x_a}}\int_{Q_a}e^{-\Phi_a(i)}\Delta G(i)di$,
hence \begin{eqnarray*}
  \frac{\sum (j-a)^2e^{-G_a(i)}}{\sum e^{-G_a(i)}}&=&\frac{\sum j^2e^{-G_a(i)}}{\sum e^{-G_a(i)}}-(a-\tilde{a})^2\\
  &=&x_a[1-\kappa_1+\kappa_2+O(\frac{(\log a)^{13}}{a^{3/2}})].
\end{eqnarray*}

On the other hand, it is easy to get that 
$$\int_{Q_a} e^{-\Psi_a(i)}di=(1+\epsilon(a))\sqrt{2\pi x_a}, $$
and $$\int_{Q_a} j^2e^{-\Psi_a(i)}di=(1+\epsilon(a))x_a\sqrt{2\pi x_a}. $$

Since $a-n_{x_a}=\frac12+O(\frac{(\log a)^{8}}{\sqrt{a}})$, we also have \begin{eqnarray*}
  \int_{Q_a} (e^{-\Psi_a(i)}-e^{-\Phi_a(i)})\Delta G di&=&\int_{Q_a} e^{-\frac{1}{2x_{\tilde{a}}}j^2}O(\frac{\log a}{a^2} j^2+\frac{1}{a^2} j^3)O(\frac{\log a}{a^{3/2}} j^2)\\
  &=&O(\frac{\log a}{\sqrt{a}}),
\end{eqnarray*}
and similarly $$\int_{Q_a} j^2(e^{-\Psi_a(i)}-e^{-\Phi_a(i)})\Delta G di=O(\log a\sqrt{a}).$$

 We also have \begin{eqnarray*}
  \int_{Q_a} (e^{-\Psi_a(i)}-e^{-\Phi_a(i)})\Delta G di&=&\int_{Q_a} e^{-\frac{1}{2x_{a}}j^2}O(\frac{(\log a)}{a^{5/2}} j^2+\frac{1}{a^2} j^3)O(\frac{(\log a)^8}{a^2} j^2)\\
  &=&O(\frac{(\log a)^8}{a}),
\end{eqnarray*}
and similarly $$\int_{Q_a} j^2(e^{-\Psi_a(i)}-e^{-\Phi_a(i)})\Delta G di=O((\log a)^8).$$

Recall that $\tilde{G}_a(i)=G_a(i)-\Phi_a(i)+\Psi_a(i)$, we have
$$\int_{Q_a}e^{-\tilde{G}_a(i)}di=(1+O(\frac{(\log a)^{20}}{a^2}))\int_{Q_a}e^{-\Psi_a(i)}di-\int_{Q_a}e^{-\Psi_a(i)}\Delta G(i)di.$$
So we have the following lemma.
\begin{lem}\label{lem-redefine}
  We have $$\nu(a)=\frac{1}{\sqrt{x_a}}\int_{Q_a}e^{-\tilde{G}_a(i)}di+O(\frac{(\log a)^{13}}{a^{3/2}}).$$
\end{lem}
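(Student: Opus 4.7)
The plan is to show $\nu(a)=h_a(x_a)/\sqrt{x_a+\tfrac{1}{6}}$ matches the target integral up to $O((\log a)^{13}/a^{3/2})$, by carefully tracking three competing $O(1/x_a)$ corrections that, by design of the redefinition, exactly cancel. First I would rewrite $h_a(x_a)=\sum_{i\ge 0}(c_i/c(a))x_a^{i-a}$ as $\sum_{i\ge 0}\zeta_a(i)$ up to the $c_0\neq c(0)$ discrepancy, which contributes $(c_0-c(0))x_a^{-a}/c(a)=\epsilon(a)$ since $x_a^{-a}/c(a)$ decays super-polynomially. Then apply Lemma \ref{lem-sum-integral} to replace the sum by $\Delta_a\int_0^\infty e^{-G_a(i)}di$ with relative error $O(a^{-3/2})$, and restrict to $Q_a$ via the mass-concentration of Remark \ref{rmk-1} (tail $\epsilon(a)$). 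Next, expand $e^{-G_a}=e^{-\Phi_a}(1-\Delta G+O((\Delta G)^2))$ on $Q_a$; since $\Delta G=O((\log a)^8/a^2)(i-\tilde a)^2$ the quadratic-and-higher $\Delta G$ terms contribute only $\int e^{-\Phi_a}(\Delta G)^2 di = O((\log a)^{16}/a^{3/2})$ to the integral, within the allowed error after dividing by $\sqrt{x_a+1/6}$.

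The main terms use two explicit expansions already established in this section: $\int_{Q_a}e^{-\Phi_a(i)}di=(1+O((\log a)^{13}/a^{3/2}))\sqrt{2\pi x_a}[1-\tfrac{1}{24x_a}]$, and (via Taylor expansion of $\lambda$ at $\tilde a$ using Proposition \ref{prop-a2}, $\lambda'(\tilde a)=\log x_a$, and the refined bounds on $\lambda''$) $\Delta_a=1+\tfrac{1}{8x_a}+O((\log a)^8/a^{3/2})$. The key algebraic identity is
\[
\Bigl(1+\tfrac{1}{8x_a}\Bigr)\Bigl(1-\tfrac{1}{24x_a}\Bigr)=1+\tfrac{1}{12x_a}+O(1/x_a^2)=\frac{\sqrt{x_a+1/6}}{\sqrt{x_a}}+O(1/x_a^2),
\]
so $\Delta_a\sqrt{2\pi x_a}(1-1/(24x_a))/\sqrt{x_a+1/6}=\sqrt{2\pi}+O((\log a)^{13}/a^{3/2})$, which is exactly what the target $(1/\sqrt{x_a})\int_{Q_a}e^{-\Psi_a}di=\sqrt{2\pi}+\epsilon(a)$ contributes on the other side.

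For the $\Delta G$-correction, I would use $\Delta_a=1+O(1/x_a)$ together with the estimate $\int_{Q_a}(e^{-\Psi_a}-e^{-\Phi_a})\Delta G\,di=O((\log a)^8/a)$ established just above the lemma, to show $\Delta_a\int_{Q_a}e^{-\Phi_a}\Delta G\,di/\sqrt{x_a+1/6} = \int_{Q_a}e^{-\Psi_a}\Delta G\,di/\sqrt{x_a}+O((\log a)^{13}/a^{3/2})$. Combining this with the displayed formula $\int_{Q_a}e^{-\tilde G_a}di=(1+O((\log a)^{20}/a^2))\int_{Q_a}e^{-\Psi_a}di-\int_{Q_a}e^{-\Psi_a}\Delta G\,di$ yields the lemma after subtracting.

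The main obstacle is the precise coordination of the three $O(1/x_a)$ corrections — the factor $1-1/(24x_a)$ from the cubic and quartic Taylor terms of $\Phi_a$, the factor $1+1/(8x_a)$ from $\Delta_a$, and the factor $1+1/(12x_a)$ from the normalization $\sqrt{x_a+1/6}/\sqrt{x_a}$ — which must cancel exactly. The specific constant $1/6$ in the redefinition of $\nu(a)$ appears to be chosen precisely so that $\tfrac{1}{8}-\tfrac{1}{24}=\tfrac{1}{12}$ matches the normalization; absent this choice a leftover $O(1/x_a)$ term would exceed the claimed error $O((\log a)^{13}/a^{3/2})$ by a large factor.
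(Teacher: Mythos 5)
Your proposal is correct and follows essentially the same route as the paper's proof: express $h_a(x_a)$ as $\Delta_a\sum e^{-G_a(i)}$, pass from sum to integral via the Poisson-summation estimate, expand $e^{-G_a}=e^{-\Phi_a}(1-\Delta G+O((\Delta G)^2))$, feed in the established Taylor expansion of $\int_{Q_a}e^{-\Phi_a}$, and observe that $\Delta_a\approx e^{1/(8x_a)}$ (via $a-\tilde a=\tfrac12+O((\log a)^8/\sqrt{a})$ and the refined $\lambda''$ bound) cancels against the $1-1/(24x_a)$ factor and the $\sqrt{x_a+1/6}$ normalization through precisely the identity $\tfrac18-\tfrac1{24}=\tfrac1{12}$ you isolate. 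You also correctly track the $\Phi_a\leftrightarrow\Psi_a$ swap in the $\Delta G$-correction using the $O((\log a)^8/a)$ bound established just before the lemma. The paper writes $\Delta_a$ as $e^{\frac{1+O(\cdot)}{2x_a}(a-\tilde a)^2}$ and then absorbs $e^{1/(8x_a)}$ directly rather than expanding to $1+\tfrac{1}{8x_a}$, but this is a cosmetic difference; the cancellation and error bookkeeping are the same.
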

\begin{proof}
  We have \begin{eqnarray*}
    h_a(x_a)&=&e^{\frac{1+O((\log a)^8/a)}{2x_a}(a-\tilde{a})^2}\sum_{i=0}^{\infty} e^{-G_a(i)}\\
    &=&(1+O(\frac{(\log a)^8}{a^{3/2}}))e^{\frac{1}{8x_a}}\int_{Q_a}e^{-G_a(i)}di\\
    &=&(1+O(\frac{(\log a)^{13}}{a^{3/2}}))e^{\frac{1}{8x_a}}[\sqrt{2\pi x_a}(1-\frac{1}{24x_a})-\int_{Q_a}e^{-\Phi_a(i)}\Delta G(i)di]\\
    &=&\sqrt{2\pi x_a}(1+\frac{1}{12x_a})-\int_{Q_a}e^{-\Psi_a(i)}\Delta G(i)di+O(\frac{(\log a)^{13}}{a^{}}).
  \end{eqnarray*}
  Therefore, \begin{eqnarray*}
    \nu(a)&=&\sqrt{2\pi}-x_a^{-1/2}\int_{Q_a}e^{-\Psi_a(i)}\Delta G(i)di+O(\frac{(\log a)^{13}}{a^{3/2}})\\
    &=&\frac{1}{\sqrt{x_a}}\int_{Q_a}e^{-\tilde{G}_a(i)}di+O(\frac{(\log a)^{13}}{a^{3/2}}).
  \end{eqnarray*}

\end{proof}
Since
$$\bar{t}(\tilde{G})=\frac{(\log a)^8}{\sqrt{a}},$$
 and $$\int_{Q_a}j^2e^{-\tilde{G}_a(i)}di=(1+O(\frac{(\log a)^{20}}{a^2}))\int_{Q_a}j^2e^{-\Psi_a(i)}di-\int_{Q_a}j^2e^{-\Psi_a(i)}\Delta G(i)di,$$ we have 
\begin{eqnarray*}
  \frac{\int_{Q_a}(i-\bar{t}(\tilde{G}))^2e^{-\tilde{G}_a(i)}di}{\int_{Q_a}e^{-\tilde{G}_a(i)}di}&=&\frac{\int_{Q_a}(i-\tilde{a})^2e^{-\tilde{G}_a(i)}di}{\int_{Q_a}e^{-\tilde{G}_a(i)}di}-O(\frac{(\log a)^{16}}{a})\\
  &=&\frac{\int_{Q_a}j^2e^{-\Psi_a(i)}di-\int_{Q_a}j^2e^{-\Psi_a(i)}\Delta G(i)di}{\int_{Q_a}e^{-\Psi_a(i)}di
  -\int_{Q_a}e^{-\Psi_a(i)}\Delta G(i)di}\\&&-O(\frac{(\log a)^{16}}{a})\\
  &=&\frac{x_a\sqrt{2\pi x_a}-\int_{Q_a}j^2e^{-\Psi_a(i)}\Delta G(i)di}{\sqrt{2\pi x_a}-\int_{Q_a}e^{-\Psi_a(i)}\Delta G(i)di}-O(\frac{(\log a)^{16}}{a})\\
  &=&x_a[1-\kappa_1+\kappa_2]+O(\frac{(\log a)^{16}}{a})
\end{eqnarray*}
Therefore, we have proved the following proposition.
\begin{prop}\label{prop-d2-logf}
  We have \begin{equation}
    \frac{d^2}{dt^2}\log f(x_a)=(1+O(\frac{(\log a)^{13}}{a^{3/2}}))\frac{\int_{Q_a}(i-\bar{t}(\tilde{G}))^2e^{-\tilde{G}_a(i)}di}{\int_{Q_a}e^{-\tilde{G}_a(i)}di}
  \end{equation}
\end{prop}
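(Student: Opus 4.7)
The plan is to start from the identity
\[
\frac{d^2}{dt^2}\log f(x_a) = H_2 = \frac{\sum_{i=0}^\infty (i-a)^2 e^{-G_a(i)}}{\sum_{i=0}^\infty e^{-G_a(i)}}
\]
and push it, step by step, to the stated expression. First I would convert the two sums to integrals over $\R$ using Lemma \ref{lem-sum-integral} together with the $j=2$ version mentioned in the remark after it; these introduce only $O(a^{-3/2})$ relative errors. Then I would truncate both integrals to $Q_a$, which costs only $\epsilon(a)$ by the Gaussian mass-concentration (Remark \ref{rmk-1}).

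Next I would replace the quadratic weight $(i-a)^2$ by $(i-\tilde a)^2 = j^2$ at the price of the correction $(a-\tilde a)^2 = \frac14 + O((\log a)^8/\sqrt a)$, using Proposition \ref{prop-a2}. Writing $G_a = \Phi_a + \Delta G$ with $\Delta G(i) = O((\log a)^8/a^2)\, j^2$ on $Q_a$, I would expand
\[
e^{-G_a(i)} = e^{-\Phi_a(i)}\bigl(1 - \Delta G(i) + O((\Delta G)^2)\bigr),
\]
and apply the Taylor expansion of $\Phi_a$ around $\tilde a$ together with the Gaussian moment computations already carried out in this section. This yields explicit leading expressions
\[
\int_{Q_a} j^2 e^{-\Phi_a}\, di = \sqrt{2\pi x_a}\, x_a\bigl[1 + \tfrac{5}{24 x_a}\bigr]\bigl(1+O((\log a)^{13} a^{-3/2})\bigr),
\]
and similarly for the denominator. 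This is precisely the computation that justifies the redefinition of $\Phi_a''$ using $x_i + \tfrac12$: the $\tfrac{5}{24x_a}$ and $-\tfrac{1}{24 x_a}$ contributions conspire, together with the shift from $a$ to $\tilde a$, so that the quotient reduces to $x_a[1-\kappa_1+\kappa_2] + O((\log a)^{13} a^{-1/2})$ with the $\kappa_i$ defined in the body.

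Finally, I would compare against $\tilde G_a = G_a - \Phi_a + \Psi_a$. Since $\Psi_a$ is a pure quadratic and the moments of $e^{-\Psi_a}$ against $\Delta G$ are computed in the excerpt (giving $O((\log a)^8/a)$ in the denominator and $O((\log a)^8)$ in the numerator), the $\Psi_a$-integrals produce the same expression $x_a[1-\kappa_1+\kappa_2]$ up to an $O((\log a)^{16}/a)$ error, absorbing the shift by $\bar t(\tilde G) = O((\log a)^8/\sqrt a)$. Collecting everything yields the claimed factor $(1+O((\log a)^{13} a^{-3/2}))$.

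The main obstacle is bookkeeping: at every stage one must verify that the absolute errors picked up (from Poisson summation for odd moments, from $(\Delta G)^2$, from the cubic $D_3 j^3$ term, and from the $\Psi_a \leftrightarrow \Phi_a$ swap) divide by the $\sqrt{2\pi x_a}\, x_a$ scale of the numerator to give relative errors no worse than $(\log a)^{13}/a^{3/2}$. In particular, the delicate point is that the $O(1/x_a)$ sub-leading terms in the moment expansions must combine precisely so the $\bar t(\tilde G)^2$ correction is absorbed into the error; this is exactly what forces the redefinition $\nu(a) = h_a(x_a)/\sqrt{x_a+\tfrac16}$ and the shift $\Phi_a''(i) = 1/(x_i+\tfrac12)$ adopted at the start of the section, and Lemma \ref{lem-redefine} serves as the consistency check that this choice matches the integral representation of $\nu(a)$.
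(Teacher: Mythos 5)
Your proposal is correct and follows essentially the same route as the paper's own derivation: Poisson summation to convert sums to integrals, truncation to $Q_a$, the parallel-axis reduction from $(i-a)^2$ to $(i-\tilde a)^2$ via Proposition \ref{prop-a2}, the expansion $e^{-G_a}=e^{-\Phi_a}(1-\Delta G+O((\Delta G)^2))$ with the Gaussian moment computations giving the $[1-\tfrac{1}{24x_a}]$ and $[1+\tfrac{5}{24x_a}]$ sub-leading factors, and finally the $\Phi_a\leftrightarrow\Psi_a$ swap controlled by the $\Delta G$-weighted integrals, with the shift by $\bar t(\tilde G)=O((\log a)^8/\sqrt a)$ absorbed quadratically into the error. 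The only small point is that Lemma \ref{lem-redefine} is not actually invoked in the paper's proof of this proposition (it enters only in the subsequent section when the $\nu$-iteration is run again); the proposition itself rests on the moment computations and the $\Delta G$ comparison you describe.
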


We also have for $i\in Q_a$,
$$x_a^{-1}+\frac{M_1-1}{x_a-2\sqrt{a}\log a}\leq \tilde{G}_a''(i)\leq x^{-1}_{a}+ \frac{M_2-1}{x_a-2\sqrt{a}\log a},$$
and that $\exists C_0>0$ so that the ratio of the upper and lower bounds satisfies
\begin{equation}\label{e-use-xi-2}
  \frac{x^{-1}_{a}+(M_2-1)(x_a-2\sqrt{a}\log a)^{-1}}{x^{-1}_{a}+(M_1-1)(x_a-2\sqrt{a}\log a)^{-1}}<\frac{M_2}{M_1}(1+C_0\frac{(\log a)^9}{a^{3/2}}).
\end{equation}

\

\subsection{For $\phi_a$ and $\tilde{g}_a$.}
Now we turn to the part of replacing $g_a$ with $\tilde{g}_a$. This part is basically a repetition of that part in subsection \ref{subsec-fls} with smaller $\Delta g$. Now we have $$\Delta g(t)=O((\log a)^8\tau^2),$$where $\tau=t-t_{a+1}$. So $$e^{-g_a(t)}=e^{-\phi_a(t)}(1-\Delta g+O(\frac{(\log a)^{20}}{a^2})),$$
and $$e^{-\tilde{g}_a(t)}=e^{-\psi_a(t)}(1-\Delta g+O(\frac{(\log a)^{20}}{a^2})).$$
Also $$\int  e^{-\phi_a(t)}\Delta_g-\int \tau^2 e^{-\psi_a(t)}\Delta_g=O(\frac{(\log a)^8}{a^{2}}),$$ and 
$$\int \tau^2 e^{-\phi_a(t)}\Delta_g-\int \tau^2 e^{-\psi_a(t)}\Delta_g=O(\frac{(\log a)^8}{a^{3}}).$$

Like the calculations for $\Phi_a$, since $\phi_a^{(k)}(t)=e^t$ for $k\geq 2$, we can calculate
$$\int_{P_a}e^{-\phi_a(t)}dt=\sqrt{\frac{2\pi}{x_{a+1}}}(1+\frac{1}{12x_{a+1}}+O(\frac{1}{a^2})), $$
and $$\int_{P_a}\tau^2 e^{-\phi_a(t)}dt=\frac{1}{x_{a+1}}\sqrt{\frac{2\pi}{x_{a+1}}}(1+\frac{5}{6x_{a+1}}+O(\frac{1}{a^2})). $$
\begin{lem}\label{lem-redefine2}
  We have $$\nu(a)=\sqrt{x_{a+1}}\int_{P_a}e^{-\tilde{g}_a(t)}dt+O(\frac{(\log a)^{13}}{a^{3/2}}).$$
\end{lem}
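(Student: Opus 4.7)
The plan is to mirror the structure of Lemma \ref{lem-redefine}, but on the integral side. The key input is the continuous normalization
\[
\frac{h_a(x_{a+1})}{x_{a+1}} = \int_{-\infty}^{\infty} e^{-g_a(t)}\,dt,
\]
which follows from $\int_0^\infty dx/h_a(x) = 1$ via $x=e^t$ together with the identity $x/h_a(x) = (x_{a+1}/h_a(x_{a+1}))\,e^{-g_a(t)}$.

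The first step is to pass from $h_a(x_{a+1})$ to $h_a(x_a)$. Since $t_a$ is the critical point of $\log h_a$, Taylor expanding $u(t)-a$ around $t_a$ using the sharp estimates of Theorem \ref{theo-log9-a} and Propositions \ref{prop-lambda33}, \ref{prop-lambda43} yields $t_{a+1}-t_a = \frac{1}{x_a}-\frac{1}{2x_a^2}+O((\log a)^8 x_a^{-5/2})$ and hence
\[
\log\frac{h_a(x_{a+1})}{h_a(x_a)} = \int_{t_a}^{t_{a+1}}(u-a)\,ds = \frac{1}{2x_a}-\frac{1}{3x_a^2}+O\bigl((\log a)^8 x_a^{-2}\bigr).
\]
The second step is to compare $\int e^{-g_a}dt$ with $\int e^{-\tilde g_a}dt$. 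Using the expansions of the current subsection --- namely $\int_{P_a}e^{-\phi_a}dt = \sqrt{2\pi/x_{a+1}}\bigl(1+\tfrac{1}{12 x_{a+1}}+O(a^{-2})\bigr)$, $\int_{P_a}e^{-\psi_a}dt = \sqrt{2\pi/x_{a+1}}+\epsilon(a)$, and the pairwise cancellation $\int e^{-\phi_a}\Delta g\, dt - \int e^{-\psi_a}\Delta g\, dt = O((\log a)^8/a^2)$ --- one deduces
\[
\int e^{-g_a}dt = \Bigl(1+\tfrac{1}{12 x_{a+1}}+O\bigl((\log a)^8 a^{-3/2}\bigr)\Bigr)\int e^{-\tilde g_a}dt.
\]

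Assembling the three factors, the lemma reduces to verifying the purely arithmetic identity
\[
\frac{x_{a+1}}{\sqrt{x_a+1/6}}\cdot e^{-\int(u-a)\,ds}\cdot\Bigl(1+\frac{1}{12 x_{a+1}}\Bigr) = \sqrt{x_{a+1}}+O\bigl((\log a)^{13}x_a^{-3/2}\bigr).
\]
Taking logarithms, expanding every factor through order $1/x_a^2$, and using $x_{a+1}=x_a+1+O((\log a)^8/a)$, the $1/x_a$ coefficients cancel exactly thanks to the choice of the constant $1/6$, and the residual $1/x_a^2$ contribution collapses to a harmless constant multiple of $1/x_a^2$, which is absorbed by the error $(\log a)^{13}/x_a^{3/2}$.

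The main obstacle is the bookkeeping of three simultaneous corrections of comparable $O(1/x_a)$ size: the $\frac{1}{12 x_{a+1}}$ from the quartic Gaussian moment of $\phi_a$, the $\frac{1}{2 x_a}-\frac{1}{3 x_a^2}$ from the ratio $h_a(x_{a+1})/h_a(x_a)$, and the $\frac{1}{12 x_a}$ from expanding $\sqrt{x_a+1/6}/\sqrt{x_a}$. Together with the shift $x_{a+1}-x_a = 1 + O((\log a)^8/a)$, these three must conspire into $\sqrt{x_{a+1}}$ to the required precision $(\log a)^{13} x_a^{-3/2}$ --- a cancellation that is precisely what dictates the choice of the constant $1/6$ in the redefinition of $\nu(a)$.
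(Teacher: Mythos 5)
Your proof is correct and follows essentially the same route as the paper: start from $h_a(x_{a+1})/x_{a+1}=\int e^{-g_a}\,dt$, step from $x_{a+1}$ back to $x_a$ via $e^{-1/(2x_a)}$, compare $\int e^{-g_a}\,dt$ with $\int e^{-\tilde g_a}\,dt$ through the Gaussian expansions of $\phi_a$ and $\psi_a$ and the $\Delta g$ cancellation, and observe the $1/x_a$ cancellation forced by the constant $\tfrac16$. One harmless bookkeeping slip: the pointwise $O((\log a)^8)$ error in $\tfrac{d^2}{dt^2}\log f(x_a)$ from Theorem \ref{theo-log9-a} already produces an error $O((\log a)^8 x_a^{-2})$ in $t_{a+1}-t_a$ (not $O((\log a)^8 x_a^{-5/2})$; Propositions \ref{prop-lambda33} and \ref{prop-lambda43} cannot improve this), and this swamps the $-\tfrac{1}{3x_a^2}$ you track in $\log(h_a(x_{a+1})/h_a(x_a))$ --- the paper correctly stops at $\tfrac{1}{2x_a}+O((\log a)^8 a^{-2})$, and doing so changes nothing in your argument.
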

\begin{proof}
Recall that $$\frac{h_a(x_{a+1})}{x_{a+1}}=\int_{-\infty}^{\infty }e^{-g_a(t)}dt.$$
Since $x_{a+1}=x_a+1+O(\frac{(\log a)^8}{a})$, we have
$$h_a(x_{a+1})=e^{\frac{x_a}{2}(\frac{1}{x_a})^2(1+O(\frac{(\log a)^8}{a}))}h_a(x_a).$$
Therefore $$h_a(x_a)=(1+\frac{1}{2x_a}+O(\frac{(\log a)^8}{a^2}))x_a\int_{-\infty}^{\infty }e^{-g_a(t)}dt.$$
Since $\sqrt{\frac{2\pi}{x_{a+1}}}=\sqrt{\frac{2\pi}{x_{a}}}(1-\frac{1}{2x_a}+O(\frac{1}{a^2}))$, 
$$\int_{P_a}e^{-\phi_a(t)}dt=\sqrt{\frac{2\pi}{x_{a}}}(1-\frac{5}{12x_{a}}+O(\frac{1}{a^2})). $$

On the other hand, $$\int_{P_a}e^{-\psi_a(t)}dt=\sqrt{\frac{2\pi}{x_{a+1}}}(1+\epsilon(a)). $$
  The rest is similar to the proof of lemma \ref{lem-redefine}. 
\end{proof}

\begin{prop}\label{prop-d2-lambda}
  We have \begin{equation}
    \lambda''(a)=(1+\frac{1}{2x_{a+1}}+O(\frac{(\log a)^{13}}{a^{3/2}}))\frac{\int_{P_a}(t-\bar{t}(\tilde{g}))^2e^{-\tilde{g}_a(t)}dt}{\int_{P_a}e^{-\tilde{g}_a(t)}dt}
  \end{equation}
\end{prop}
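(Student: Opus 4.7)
The plan mirrors the proof of Proposition 6.1, but in the $t$-variable. First I would note that $dx/h_a(x) = e^{-g_a(t)}/\int e^{-g_a(s)}\,ds\,dt$ is a probability measure on $\R$, its mean equals $\tilde t_a$, and \eqref{e-lam2} then exhibits $\lambda''(a)$ as the variance of $t$ under this measure. Setting $\tau = t - t_{a+1}$, I would split off the center shift:
\[
\lambda''(a) = I_2 - \delta_a^2, \qquad I_2 := \frac{\int \tau^2 e^{-g_a(t)}\,dt}{\int e^{-g_a(t)}\,dt}, \qquad \delta_a = t_{a+1} - \tilde t_a,
\]
where by Corollary 5.1, $\delta_a^2 = \tfrac{1}{4 x_{a+1}^2} + O\bigl((\log a)^8 a^{-5/2}\bigr)$.

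Next I would exploit the explicit Taylor expansion $\phi_a(t) = x_{a+1}(e^\tau - 1 - \tau) = \tfrac{x_{a+1}}{2}\tau^2 + \tfrac{x_{a+1}}{6}\tau^3 + \tfrac{x_{a+1}}{24}\tau^4 + \cdots$ together with the Gaussian moment identity $\int e^{-x\tau^2/2}\tau^{2n}\,d\tau = \sqrt{2\pi/x}\,(2n-1)!!/x^n$ already used in the excerpt to get
\[
\int_{P_a}e^{-\phi_a}\,dt = \sqrt{\tfrac{2\pi}{x_{a+1}}}\Bigl(1 + \tfrac{1}{12 x_{a+1}} + O(a^{-2})\Bigr),
\]
\[
\int_{P_a}\tau^2 e^{-\phi_a}\,dt = \tfrac{1}{x_{a+1}}\sqrt{\tfrac{2\pi}{x_{a+1}}}\Bigl(1 + \tfrac{5}{6 x_{a+1}} + O(a^{-2})\Bigr).
\]
Then I would pass from $g_a$ to $\phi_a$ via $e^{-g_a} = e^{-\phi_a}(1 - \Delta g + O((\Delta g)^2))$ with $\Delta g = O((\log a)^8 \tau^2)$, and transfer the $\Delta g$-cross terms from $\phi_a$ to $\psi_a$ using the comparison $|\psi_a - \phi_a|\le C x_{a+1}|\tau|^3$, exactly as in Section 6.1. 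Forming the ratio and tracking only the surviving leading corrections yields
\[
I_2 = \Bigl(1 + \tfrac{3}{4 x_{a+1}} + O\bigl(\tfrac{(\log a)^{13}}{a^{3/2}}\bigr)\Bigr)\,\frac{\int_{P_a}\tau^2 e^{-\tilde g_a}\,dt}{\int_{P_a}e^{-\tilde g_a}\,dt},
\]
the key coefficient $\tfrac{3}{4 x_{a+1}} = \tfrac{5}{6 x_{a+1}} - \tfrac{1}{12 x_{a+1}}$ arising from the quotient of the numerator and denominator corrections above.

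To conclude, I would observe that, by symmetry of $\psi_a$ in $\tau$, only the cubic part of $\Delta g$ contributes to $\int \tau e^{-\tilde g_a}$, so the mean shift $\bar t(\tilde g_a) - t_{a+1}$ is of order $O((\log a)^8 a^{-3/2})$ and its square is absorbed into the error; hence $B(\tilde g_a)/A(\tilde g_a)$ equals $\int \tau^2 e^{-\tilde g_a}/\int e^{-\tilde g_a}$ up to the same precision. Substituting the expression for $I_2$, using $B(\tilde g_a)/A(\tilde g_a) = 1/x_{a+1} + O((\log a)^{13}/a^{5/2})$, and subtracting $\delta_a^2$, the coefficient $\tfrac{3}{4 x_{a+1}}$ splits as $\tfrac{1}{2 x_{a+1}} + \tfrac{1}{4 x_{a+1}}$, and the second piece cancels $\delta_a^2$ against the leading $1/x_{a+1}$ in $B(\tilde g_a)/A(\tilde g_a)$ up to an error of size $O((\log a)^{13}/a^{5/2})$; this can then be folded back into the claimed multiplicative factor $1 + \tfrac{1}{2 x_{a+1}} + O\bigl((\log a)^{13}/a^{3/2}\bigr)$. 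I expect the main technical obstacle to be the careful bookkeeping needed to match the $\Delta g$-cross terms on the two sides of the comparison to the stated precision, which is entirely parallel to but slightly more delicate than the analogous calculation already carried out for Proposition 6.1 in the $i$-variable.
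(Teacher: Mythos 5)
Your proposal is correct and follows essentially the same route as the paper's proof: both expand $\phi_a$ in $\tau$, use the Gaussian moment identities to get $\int_{P_a}e^{-\phi_a}=\sqrt{2\pi/x_{a+1}}\,(1+\tfrac{1}{12x_{a+1}}+O(a^{-2}))$ and $\int_{P_a}\tau^2e^{-\phi_a}=\tfrac{1}{x_{a+1}}\sqrt{2\pi/x_{a+1}}\,(1+\tfrac{5}{6x_{a+1}}+O(a^{-2}))$, transfer the $\Delta g$-cross terms from $\phi_a$ to $\psi_a$ exactly as in Proposition \ref{prop-d2-logf}, and exploit $\delta_a^2=\tfrac{1}{4x_{a+1}^2}+O((\log a)^8a^{-5/2})$ from Corollary \ref{cor-tt-2}. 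The only cosmetic difference is the bookkeeping order: you subtract $\delta_a^2$ at the very end and split $\tfrac{3}{4x_{a+1}}=\tfrac{1}{2x_{a+1}}+\tfrac{1}{4x_{a+1}}$, whereas the paper subtracts $\tfrac{1}{4x_{a+1}^2}$ from the $\phi_a$-ratio first to exhibit the factor $1+\tfrac{1}{2x_{a+1}}$ directly; the arithmetic is the same.
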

\begin{proof}
  We only need to notice that 
  \begin{eqnarray*}
    \frac{\int_{P_a}\tau^2 e^{-\phi_a(t)}dt}{\int_{P_a}e^{-\phi_a(t)}dt}&=&\frac{1}{x_{a+1}}(1+\frac{3}{4x_{a+1}}+O(\frac{1}{a^2}))\\
  \end{eqnarray*}
  So $$\frac{\int_{P_a}\tau^2 e^{-\phi_a(t)}dt}{\int_{P_a}e^{-\phi_a(t)}dt}-\frac{1}{4x_{a+1}^2}=\frac{1}{x_{a+1}}(1+\frac{1}{2x_{a+1}}+O(\frac{1}{a^2})).$$
  On the other hand $$\frac{\int_{P_a}\tau^2 e^{-\psi_a(t)}dt}{\int_{P_a}e^{-\psi_a(t)}dt}=\frac{1}{x_{a+1}}+\epsilon(a).$$
  The rest is similar to the proof of proposition \ref{prop-d2-logf}.
\end{proof}
\begin{rem}
  The term $\frac{1}{2x_{a+1}}$ explains our new definition of $\Phi_a$ that makes $\Phi_a''(i)=\frac{1}{x_i+1/2}$.
\end{rem}

We also have for $t\in P_a$,
$$x_{a+1}+(M_1-1)(x_a+\sqrt{a}\log a)\leq \tilde{g}_a''(t)\leq x_{a+1}+ (M_2-1)(x_a+\sqrt{a}\log a),$$
and that $\exists C_1>0$ so that the ratio of the upper and lower bounds satisfies
\begin{equation}\label{e-use-xi-3}
  \frac{x_{a+1}+(M_2-1)(x_a+\sqrt{a}\log a)}{x_{a+1}+(M_1-1)(x_a+\sqrt{a}\log a)}<\frac{M_2}{M_1}(1+C_1\frac{(\log a)^9}{a^{3/2}}).
\end{equation}
Let $B_0=\max\{C_0,C_1\}$, we now define 
\begin{equation}\label{e-B0}
  m(a)=\frac{M_2}{M_1}(1+B_0\frac{(\log a)^9}{a^{3/2}}).
\end{equation}

\section{The final gap}

In this subsection, we prove the following theorem 
\begin{theorem}\label{theo-general-a32}
  $\exists C>0$ such that for $a$ large enough, we have 
  $$m(a)\leq 1+C\frac{(\log a)^{13}}{a^{3/2}}.$$
\end{theorem}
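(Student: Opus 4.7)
The plan is to run the machinery a third time, mirroring the structure of the proofs of Theorems \ref{theo-non-monotone-2} and \ref{theo-general-a-log}, but with the finer tools from Section~\ref{sec-6} substituted throughout: the redefined $\nu(a)=h_a(x_a)/\sqrt{x_a+\tfrac{1}{6}}$, the redefined $\Phi_a$ with $\Phi_a''(i)=1/(x_i+\tfrac12)$, Propositions~\ref{prop-d2-logf} and~\ref{prop-d2-lambda}, Lemmas~\ref{lem-redefine} and~\ref{lem-redefine2}, and the sharpened ratio bounds \eqref{e-use-xi-2} and \eqref{e-use-xi-3}. These tools have been designed precisely so that all the spurious $O(1/x_a)$ drift that previously blocked improvement beyond $(\log a)^8/a$ is now absorbed into the definitions, leaving clean relative errors of size $O((\log a)^{13}/a^{3/2})$.

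I would first handle the non-monotonic case. At a local minimum $b$ of $\nu$, the vanishing of $\nu'(b)$ together with the sign of $\nu''(b)$ gives
$$\lambda''(b)\cdot\frac{d^2}{dt^2}\log f(x_b)\geq 1-O\!\Big(\frac{(\log b)^{13}}{b^{3/2}}\Big),$$
where Proposition~\ref{prop-lambda33} is used to bound the $d^3/dt^3\log f$ correction. Inserting Propositions~\ref{prop-d2-logf} and~\ref{prop-d2-lambda} on the left and then applying the admissible-function inequality $\tilde{d}(\tilde g_a),\tilde{d}(\tilde G_a)\leq \tilde p(m(b))$ (with the new $m$ from \eqref{e-B0}) converts this into $(\nu(b))^4 p_1^2\geq 1-O((\log b)^{13}/b^{3/2})$, yielding
$$\nu(b)\geq (1-C_3\tfrac{(\log b)^{13}}{b^{3/2}})\tfrac{1}{\sqrt{p_1}},\qquad \nu(b')\leq (1+C_4\tfrac{(\log b)^{13}}{b^{3/2}})\tfrac{1}{\sqrt{q_1}}$$
at local maxima $b'$, and, by the same interpolation argument used in Section~\ref{sec-3}, between these extremes for all $a\geq b$.

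The heart of the argument is then a refined version of Lemma~\ref{lem-p1q12}: if $m(a)>1+C\frac{(\log a)^{13}}{a^{3/2}}$ with $C$ large enough, then on a monotone segment $[a,a+2\sqrt{a}\log a]$ the lower bound on $\nu$ forces $(x_{a_2}+\tfrac12)\lambda''(a_2)\geq 1+C_7(\log a)^{13}/a^{3/2}$, which via Lemma~\ref{lem-redefine} upper bounds $\nu(a_2)$ by $(1+C_9(\log a)^{13}/a^{3/2})\sqrt{2\pi}$, contradicting the assumed gap for $C$ large. One then iterates exactly as in Theorem~\ref{theo-non-monotone-2}: with $a_{n+1}=a_n'+2\sqrt{a_n'}\log a_n'$, the contraction $m(a_{n+1}')-1<0.95\,(m(a_n')-1)$ is obtained from the ratio $p_1^2/q_1^2$ acting on the admissibility constant, and after roughly $\sqrt{a}/(4\log a)$ steps $a_n$ exceeds $2a$ while $0.95^n(m(a)-1)$ has dropped well below $(\log a)^{13}/a^{3/2}$, yielding the desired contradiction. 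The monotone case reduces to the non-monotonic one verbatim as in the proof of Theorem~\ref{theo-general-log}, by picking points where either $\nu''(a_j)=O(1/a_j^2)$ or $\nu''=0$.

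The main obstacle I anticipate is book-keeping rather than conceptual: one must verify that every error term actually enters at size $O((\log a)^{13}/a^{3/2})$ and not larger. In particular, the step $\lambda''(b)\cdot (d^2/dt^2)\log f(x_b)\geq 1-\text{error}$ now requires the third-derivative control of Proposition~\ref{prop-lambda33}, the extra $1/(2x_{a+1})$ correction in Proposition~\ref{prop-d2-lambda} must cancel against the $1/(24x_a)$-type terms absorbed in the new $\Phi_a$, and the contraction constant $0.95$ in the iteration needs to survive the replacement of the $(1+C_6\,\log a/\sqrt a)$ factor of Section~\ref{sec-3} by a $(1+O((\log a)^9/a^{3/2}))$ factor—an improvement, so this step should go through comfortably provided the constant $B_0$ in \eqref{e-B0} was chosen (as it was) to dominate the admissibility ratios \eqref{e-use-xi-2} and \eqref{e-use-xi-3}.
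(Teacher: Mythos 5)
Your sketch traces the paper's overall route correctly (run the machinery a third time, handle local extrema of $\nu$, then a contraction argument \`a la Lemma~\ref{lem-p1q13}, then reduce the monotone case to the non-monotonic one), but the key inequality you write at a local minimum,
$$\lambda''(b)\,\frac{d^2}{dt^2}\log f(x_b)\;\geq\; 1-O\!\Big(\frac{(\log b)^{13}}{b^{3/2}}\Big),$$
is false, and the discrepancy is exactly the effect the whole of Section~\ref{sec-6} is built to control. To leading order $\lambda''(b)\approx \frac{1}{x_b+1/2}$ and $\frac{d^2}{dt^2}\log f(x_b)\approx x_b$, so $\lambda''(b)\frac{d^2}{dt^2}\log f(x_b)\approx 1-\frac{1}{2x_b}$; the deviation from $1$ is of size $\sim 1/b$, far larger than $(\log b)^{13}/b^{3/2}$. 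What $\nu''(b)\geq 0$ actually gives, with the redefined $\nu$, is
$$\Big(\lambda''(b)+\frac{1}{2x_b^2}\Big)\frac{d^2}{dt^2}\log f(x_b)\;\geq\; 1-O\!\Big(\frac{(\log b)^{8}}{b^{3/2}}\Big),$$
where the extra $\frac{1}{2x_b^2}$ arises because the third--derivative term $\frac{\frac{d^3}{dt^3}\log f(x_b)}{2(\frac{d^2}{dt^2}\log f(x_b))^3}$ is \emph{not} negligible at this precision: Proposition~\ref{prop-lambda33} is used to replace it by its leading value $\frac{1}{2x_b^2}$ up to an $O((\log b)^8/b^{5/2})$ error, not to absorb it into the error. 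Dropping that term is precisely the step that was permissible in Section~\ref{sec-5} (target accuracy $O((\log b)^8/b)$) but is no longer permissible here.

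The rest of the computation is delicately tuned to make this $\frac{1}{2x_b^2}$ term cancel. Feeding $(\lambda''(b)+\frac{1}{2x_b^2})\frac{d^2}{dt^2}\log f(x_b)\geq 1-O(\cdot)$ into Propositions~\ref{prop-d2-logf} and~\ref{prop-d2-lambda} produces $(1+\frac{1}{x_b})\gamma(b)\Gamma(b)\geq 1+O(\cdot)$ (the $\frac{1}{2x_b^2}$ combines with the $\frac{1}{2x_{b+1}}$ factor in Proposition~\ref{prop-d2-lambda}), and then the bounds $\gamma(b)\leq p_1\nu^2(b)/x_{b+1}$, $\Gamma(b)\leq p_1 x_b\nu^2(b)$ contribute a factor $\frac{x_b}{x_{b+1}}\approx 1-\frac{1}{x_b}$ that cancels the $(1+\frac{1}{x_b})$; only then does $p_1^2\nu^4(b)\geq 1-O((\log b)^{13}/b^{3/2})$ emerge. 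Your remark that ``the extra $1/(2x_{a+1})$ correction in Proposition~\ref{prop-d2-lambda} must cancel against the $1/(24x_a)$-type terms absorbed in the new $\Phi_a$'' points at the right circle of ideas but mislocates the cancellation: the $\frac{1}{24x_a}$-type terms are absorbed already inside the redefinitions that make $\nu$ and $\Phi_a$ well-normalized; the $\frac{1}{2x_{a+1}}$ factor instead pairs with the third-derivative contribution $\frac{1}{2x_b^2}$ and the $\frac{x_b}{x_{b+1}}$ factor above. As written, the proposal loses the $\frac{1}{2x_b^2}$ at the first step and cannot recover the stated error estimate; the fix is to carry that term explicitly through the chain as the paper does.
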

The proof is basically a repetition of the proof of theorem \ref{theo-general-log}. But given the subtlety of the error, we go over the proof again.
We have, by corollary \ref{cor-tt-2},  \begin{eqnarray*}
  \nu'(b)&=&\tilde{t}_b-t_b-\frac{1}{2\frac{d^2}{dt^2}\log f(x_b)}\frac{x_b}{x_b+\frac16}\\,
  &=&O(\frac{(\log b)^{8}}{b^{3/2}})
\end{eqnarray*}
and
\begin{eqnarray*}
  \nu''(b)&=&\lambda''(b)-\frac{1}{\frac{d^2}{dt^2}\log f(x_b)}+\frac{\frac{d^3}{dt^3}\log f(x_b)}{2(\frac{d^2}{dt^2}\log f(x_b))^3}\frac{x_b}{x_b+\frac12}\\
  &&+\frac{1}{2\frac{d^2}{dt^2}\log f(x_b)}\frac{dx_b/db}{6(x_b+\frac16)^2}\\
  &=&\lambda''(b)-\frac{1}{\frac{d^2}{dt^2}\log f(x_b)}+\frac{1}{2x_b^2}+O(\frac{(\log a)^8}{b^{5/2}})\\
  &=&(\frac{d^2}{dt^2}\log f(x_b))^{-1}[(\lambda''(b)+\frac{1}{2x_b^2})\frac{d^2}{dt^2}\log f(x_b)-1]+O(\frac{(\log a)^8}{b^{5/2}})
\end{eqnarray*}

\subsection*{the non-monotonic case}
We first assume that $\nexists n>0$ such that $\nu(a)$ is monotone for $a>n$. 

Let $b$ be a local minimum point of $\nu(a)$. By assumption, there are infinite such $b$, so our following notations $O(\frac{1}{\sqrt{b}})$, etc. make sense. Then we have 

$$\tilde{t}_b-t_b=\frac{1}{2\frac{d^2}{dt^2}\log f(x_b)},$$
and 
$$(\lambda''(b)+\frac{1}{2x_b^2})\frac{d^2}{dt^2}\log f(x_b)\geq 1+O(\frac{(\log a)^8}{b^{3/2}}).$$
Recall that we denote by $$\gamma(a)=\frac{\int_{P_a}(t-\bar{t}(\tilde{g}))^2e^{-\tilde{g}_a(t)}dt}{\int_{P_a}e^{-\tilde{g}_a(t)}dt},$$ and 
$$\Gamma(a)=\frac{\int_{Q_a}(i-\bar{t}(\tilde{G}))^2e^{-\tilde{G}_a(i)}di}{\int_{Q_a}e^{-\tilde{G}_a(i)}di}, $$
then by propositions \ref{prop-d2-logf} and \ref{prop-d2-lambda}, we get
$$(1+\frac{1}{x_b})\gamma(b)\Gamma(b) \geq 1+O(\frac{(\log b)^8}{b^{3/2}}),$$ 
since $\gamma(b)=\frac{1}{x_{b+1}}(1+O(\frac{(\log b)^8}{b}))$.
Then let $p_1=\tilde{p}(m(b))$, $q_1=\tilde{q}(m(b))$, by lemma \ref{lem-redefine} and lemma \ref{lem-redefine2}, we have 
\begin{eqnarray*}
  \gamma(b)&\leq& p_1(\int_{P_a}e^{-\tilde{g}_a(t)}dt+\epsilon(b))^2\\
  &=&p_1(\frac{\nu(b)}{\sqrt{x_{b+1}}}+O(\frac{(\log a)^{13}}{a^2}))^2\\
  &=&p_1\frac{\nu^2(b)}{x_{b+1}}(1+O(\frac{(\log a)^{13}}{a^{3/2}})),
\end{eqnarray*}
and similarly $$\Gamma(b)\leq p_1 x_b\nu^2(b)(1+O(\frac{(\log a)^{13}}{a^{3/2}})).$$
Therefore, $$p_1^2\nu^4(b)\geq 1+O(\frac{(\log a)^{13}}{a^{3/2}}),$$
namely $$\nu(b)\geq \frac{1}{\sqrt{p_1}}(1+O(\frac{(\log b)^{13}}{b^{3/2}})).$$
Similarly when $b$ be a local maximum point of $\nu(a)$, we have 
$$\nu(b)\leq \frac{1}{\sqrt{q_1}}(1+O(\frac{(\log b)^{13}}{b^{3/2}})).$$
Then by the same argument preceeding lemma \ref{lem-p1q1} we get that there exists $C_1>0$ such that 
when $b$ is a local extremal point of $\nu(a)$, then 
$$(1-C_1\frac{(\log b)^{13}}{b^{3/2}})\frac{1}{\sqrt{p_1}}<\nu(a)<(1+C_1\frac{(\log b)^{13}}{b^{3/2}})\frac{1}{\sqrt{q_1}},$$
for $a\geq b$.
Then we can prove a finer version of lemma \ref{lem-p1q12} by repeating its proof. 
\begin{lem}\label{lem-p1q13}
  There exists $C>0$ such that for $a$ large enough, assume $m(a)>1+C\frac{\log a}{\sqrt{a}}$, then
  $$(1-C_1\frac{(\log a)^{13}}{a^{3/2}})\frac{1}{\sqrt{p_1}}< \nu(a')< (1+C_1\frac{(\log a)^{13}}{a^{3/2}})\frac{1}{\sqrt{q_1}},$$
  where $p_1=\tilde{p}(m(a))$ and $q_1=\tilde{q}(m(a))$,
  for all $a'\geq a+2\sqrt{a}\log a$.
\end{lem}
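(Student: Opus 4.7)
The plan is to transplant the proof of Lemma~\ref{lem-p1q12} verbatim in its structure, but now feeding it the sharper inputs derived in Sections~\ref{sec-5} and~\ref{sec-6}: namely Propositions~\ref{prop-d2-logf} and~\ref{prop-d2-lambda}, together with Lemmas~\ref{lem-redefine} and~\ref{lem-redefine2}, which pin down $\nu(b)$ in terms of $\gamma(b)$, $\Gamma(b)$ with relative error $O((\log a)^{13}/a^{3/2})$. The case in which $a$ is itself a local extremum of $\nu$ is already handled in the derivation immediately preceding the lemma statement: combining prop-d2-logf, prop-d2-lambda, lem-redefine and lem-redefine2 with the vanishing of $\nu'$ at such a point yields
$$\nu(b)\geq \tfrac{1}{\sqrt{p_1}}\bigl(1-C_1\tfrac{(\log b)^{13}}{b^{3/2}}\bigr)\quad\text{(min)},\qquad \nu(b)\leq \tfrac{1}{\sqrt{q_1}}\bigl(1+C_1\tfrac{(\log b)^{13}}{b^{3/2}}\bigr)\quad\text{(max)}.$$
If there is a local extremum of $\nu$ somewhere on $[a,a+2\sqrt{a}\log a]$, the sandwich argument used for lem-p1q1 (replace $b$ by the next local min on the left and the next local max on the right) transfers these bounds to all $a'\geq a+2\sqrt{a}\log a$, so nothing more is needed in that subcase.

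I would then treat the remaining monotone subcase exactly as in lem-p1q12: assume without loss of generality that $\nu$ is decreasing on $[a,a+2\sqrt{a}\log a]$. The lower bound on $\nu$ follows because there exists a local minimum $a_1>a+2\sqrt{a}\log a$ with $\nu(a_1)\leq\nu(a')$ for $a'\in[a,a_1]$. For the upper bound I argue by contradiction: suppose $\nu(a+2\sqrt{a}\log a)\geq(1+C_1\frac{(\log a)^{13}}{a^{3/2}})/\sqrt{q_1}$, so that the same inequality holds for all $a'\in[a,a+2\sqrt{a}\log a]$ by monotonicity. Plugging this into the lower bound for $\gamma(b)=\tilde{q}(m(b))(\int_{P_b}e^{-\tilde{g}_b})^2(1-\cdots)$ coming from the admissibility inequality, and using lem-redefine2 to convert $\int_{P_b}e^{-\tilde{g}_b}$ into $\nu(b)/\sqrt{x_{b+1}}$, yields $x_b\gamma(b)\geq 1+C'(\log a)^{13}/a^{3/2}$ on the interval, and then $x_b\lambda''(b)\geq 1+C''(\log a)^{13}/a^{3/2}$ via prop-d2-lambda.

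Setting $a_2=a+\sqrt{a}\log a$, this lower bound on $x_b\lambda''(b)$ forces $\Phi_{a_2}''(i)\geq (1+C''(\log a)^{13}/a^{3/2})/x_{a_2}$ on $Q_{a_2}$, whence
$$\int_{Q_{a_2}}e^{-\Phi_{a_2}(i)}di\leq\sqrt{2\pi x_{a_2}}\bigl(1-\tfrac{C''}{2}\tfrac{(\log a)^{13}}{a^{3/2}}\bigr)\bigl(1+O(\tfrac{1}{x_{a_2}})\bigr),$$
and lem-redefine together with the bound $\Delta_{a_2}=1+O((m-1)^2)$ gives $\nu(a_2)\leq(1+O((\log a)^{13}/a^{3/2}))\sqrt{2\pi}$. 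Combined with the assumed lower bound on $\nu(a_2)$, this forces
$$\tfrac{1+C_1(\log a)^{13}/a^{3/2}}{\sqrt{2\pi q_1}}\leq 1+O\!\bigl(\tfrac{(\log a)^{13}}{a^{3/2}}\bigr);$$
since Section~2 shows $\frac{1}{2\pi}-\tilde{q}(m)\geq 0.031(m-1)$, this inequality is impossible once $m(a)-1$ exceeds a suitable multiple of $(\log a)^{13}/a^{3/2}$, giving the required contradiction. The increasing case is symmetric.

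The only real obstacle is arithmetic bookkeeping: the hypothesis, the conclusion, and the admissibility ratio must all be tracked consistently to the order $(\log a)^{13}/a^{3/2}$, and one must check that the constant $B_0$ appearing in \eqref{e-B0} does not inflate the effective error beyond what $\tilde p/\tilde q$ can absorb. No new analytic input is needed beyond the refined estimates of Section~\ref{sec-6} and the verbatim repetition of the strategy of lem-p1q12.
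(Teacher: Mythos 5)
Your plan follows the same strategy as the paper's own proof: treat the local-extremum case via the estimates derived immediately before the lemma, then handle the monotone case by contradiction, pumping the presumed lower bound on $\nu$ through the admissibility inequality to force $\lambda''$ up on the interval, and capping $\nu(a_2)$ via Lemma~\ref{lem-redefine} to contradict $\nu(a_2)\geq (1+C_1\cdots)/\sqrt{q_1}$. So the overall route is the same.

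However, several of your intermediate displays contain slips at precisely the order $O(1/a)$ that Section~\ref{sec-6} was designed to absorb, and these would break the chain if carried out literally. You assert $x_b\gamma(b)\geq 1+C'(\log a)^{13}/a^{3/2}$ and $x_b\lambda''(b)\geq 1+C''(\log a)^{13}/a^{3/2}$; the correct versions are $x_{b+1}\gamma(b)\geq 1+C_4(\log a)^{13}/a^{3/2}$ and $x_b\lambda''(b)\geq 1-\tfrac{1}{2x_b}+C_5(\log a)^{13}/a^{3/2}$, equivalently $(x_b+\tfrac12)\lambda''(b)\geq 1+C_5(\log a)^{13}/a^{3/2}$. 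The omitted $-\tfrac{1}{2x_b}$ dominates the $(\log a)^{13}/a^{3/2}$ error being tracked, and absorbing it is the whole point of the $+\tfrac12$ shift built into $\Phi_a''(i)=\tfrac{1}{x_i+1/2}$ and the redefined $M_1'', M_2''$. You also write that the $\lambda''$ bound ``forces $\Phi_{a_2}''(i)\geq (1+C''\cdots)/x_{a_2}$''; but $\Phi_{a_2}''(i)=\tfrac{1}{x_i+1/2}$ is a fixed reference function and is not constrained by $\lambda''$. What is actually forced is $G_{a_2}''(i)=\lambda''(i)\geq (1+C_5\cdots)\Phi_{a_2}''(i)$, hence $\tilde{G}_{a_2}''\geq \Psi_{a_2}''$ and $\int e^{-\tilde{G}_{a_2}}\leq\int e^{-\Psi_{a_2}}=\sqrt{2\pi x_{a_2}}$. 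Consequently your displayed bound on $\int_{Q_{a_2}}e^{-\Phi_{a_2}}$ with a gain of $(1-\tfrac{C''}{2}\cdots)$ is incorrect: that integral is a fixed quantity $\approx\sqrt{2\pi x_{a_2}}(1-\tfrac{1}{24x_{a_2}})$ with no such gain, and the contradiction comes instead from the fact that $1/\sqrt{q_1}$ exceeds $\sqrt{2\pi}$ by $\Theta(m-1)$. Since you ultimately invoke Lemma~\ref{lem-redefine} to conclude $\nu(a_2)\leq(1+O(\cdots))\sqrt{2\pi}$, the final contradiction would still land once these steps are repaired, but they are exactly the bookkeeping you flagged as the real obstacle, and they do need to be fixed before the ``verbatim repetition'' goes through.
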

\begin{proof}
  We have proved the case when $a$ is a local extremal point of $\nu$. Therefore if there is a local extremal point of $\nu$ on the interval $[a,a+2\sqrt{a}\log a]$, the conclusion of the proposition also follows. So we only need to consider the case when $\mu$ is monotone on $[a,a+2\sqrt{a}\log a]$. We assume that $\mu$ is decreasing. Then $\exists a_1>a+2\sqrt{a}\log a$ a local minimum point of $\nu$ such that $\nu(a_1)\leq \nu(a')$ for all $a'\in [a,a_1]$. It then follows that we have 
  $$\nu(a')\geq (1-C_1\frac{(\log b)^{13}}{b^{3/2}})\frac{1}{\sqrt{p_1}},$$ for all $a'\geq a$. If $\nu(a+2\sqrt{a}\log a)<(1+C_1\frac{(\log a)^{13}}{a^{3/2}})\frac{1}{\sqrt{q_1}}$, then the upper bound for $a'\geq a+2\sqrt{a}\log a$ also follows. So we assume that $\nu(a+2\sqrt{a}\log a)\geq (1+C_1\frac{(\log a)^{13}}{a^{3/2}})\frac{1}{\sqrt{q_1}}$, hence $$\nu(a')\geq (1+C_1\frac{(\log a)^{13}}{a^{3/2}})\frac{1}{\sqrt{q_1}},$$ for $a'\in [a,a+2\sqrt{a}\log a]$.

  Notice that for all $b\geq a$ we have
  \begin{eqnarray*}
    \gamma(b)\geq q_1\frac{\nu^2(b)}{x_{b+1}}(1+O(\frac{(\log a)^{13}}{a^{3/2}}))
  \end{eqnarray*}
  
  Therefore 
  $$x_{b+1}\gamma(b)\geq 1+C_4\frac{(\log a)^{13}}{a^{3/2}},$$
  for $b\in [a,a+2\sqrt{a}\log a]$, for some $C_4$ independent of $a$.
  By proposition \ref{prop-d2-lambda}, we have $$x_b\lambda''(b)\geq 1-\frac{1}{2x_b}+C_5\frac{(\log a)^{13}}{a^{3/2}},$$
  for $b\in [a,a+2\sqrt{a}\log a]$, for some $C_5$ independent of $a$.
  Let $a_2=a+\sqrt{a}\log a$, then 
 \begin{eqnarray*}
  h_{a_2}(x_{a_2})&\leq& (1+C_6\frac{(\log a)^{13}}{a^{3/2}})e^{\frac{1}{8x_{a_2}}}\int_{Q_a}e^{-\Phi_{a_2}(i)}di\\
  &\leq & (1+C_7\frac{(\log a)^{13}}{a^{3/2}})\sqrt{2\pi x_{a_2}}(1+\frac{1}{12x_{a_2}}),
 \end{eqnarray*}
 for some $C_6, C_7$ independent of $a$. 
  So we have 
  $$\nu(a_2)\leq (1+C_7\frac{(\log a)^{13}}{a^{3/2}})\sqrt{2\pi}.$$
 We get 
 \begin{equation}\label{e-impossible}
  (1+C_1\frac{(\log a)^{13}}{a^{3/2}})\frac{1}{\sqrt{q_1}}\leq (1+C_7\frac{(\log a)^{13}}{a^{3/2}})\sqrt{2\pi}.
 \end{equation}

Then one can see that there exists $C>0$ such that if $$m(a)>1+C\frac{(\log a)^{13}}{a^{3/2}} ,$$ then formula \ref{e-impossible} is impossible.

The case when $\mu$ is increasing is similar.

\end{proof}

\begin{theorem}
  There exists $C>0$ such that for $a$ large enough such that $$m(a)<1+C\frac{(\log a)^{13}}{a^{3/2}}.$$
\end{theorem}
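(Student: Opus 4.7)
The plan is to reduce to the non-monotonic case handled in the preceding theorem, following the blueprint used to deduce Theorem \ref{theo-general-log} from Theorem \ref{theo-non-monotone} (and likewise Theorem \ref{theo-general-a-log} from Theorem \ref{theo-non-monotone-2}), but with error bookkeeping carried out at the finer scale $(\log a)^{13}/a^{3/2}$ dictated by Section \ref{sec-6}. We already know from Theorem \ref{theo-log9-a} that $\nu(a)$ is bounded. So if $\nu$ fails to be eventually monotone, we are done by the non-monotonic theorem. The remaining case is when $\nu'(a)$ has a fixed sign for all large $a$; by symmetry I will assume $\nu'(a)\geq 0$.

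Since $\nu$ is bounded and increasing, $\int_{a_0}^\infty \nu'(a)\,da<\infty$, so the subset $E_T=\{a\leq T:\nu'(a)\geq 1/a\}$ has vanishing Lebesgue density. Combined with the sharpened estimate $\tilde t_a-t_{a+1}=O((\log a)^{8}/a^{3/2})$ (Corollary \ref{cor-tt-2} carried through Section \ref{sec-6}), this yields $\lim_{a\to\infty}\nu'(a)=0$. I split into two subcases exactly as in the proof of Theorem \ref{theo-general-log}. If $\nu'$ is itself monotone, then $\nu'(a)=o(1/a)$ and the mean value theorem produces a sequence $a_j\to\infty$ with $\nu''(a_j)=O(1/a_j^2)$; if $\nu'$ is not monotone, then at any local extremum $a$ of $\nu'$ with $|\nu'(a)|<1/a$ we have $\nu''(a)=0$. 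In either scenario we obtain an infinite sequence of \emph{almost-critical} points $b_k\to\infty$ at which
\[
\nu'(b_k)=O(1/b_k),\qquad \nu''(b_k)\geq -O(1/b_k^2),
\]
and on such points the expression for $\nu''(b)$ from the beginning of this section gives
\[
(\lambda''(b_k)+\tfrac{1}{2x_{b_k}^2})\,\tfrac{d^2}{dt^2}\log f(x_{b_k})\;\geq\;1+O\bigl((\log b_k)^{8}/b_k^{3/2}\bigr),
\]
which is precisely the input that drove the non-monotonic argument.

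Running that argument verbatim at each $b_k$ via propositions \ref{prop-d2-logf} and \ref{prop-d2-lambda}, lemmas \ref{lem-redefine} and \ref{lem-redefine2}, yields the lower bound
\[
\nu(b_k)\geq \frac{1}{\sqrt{\tilde p(m(b_k))}}\bigl(1+O((\log b_k)^{13}/b_k^{3/2})\bigr),
\]
together with the companion upper bound at any nearby local maximum-like point. Interpolating monotonically between consecutive $b_k$'s (using that $\nu$ is monotone in this case) transplants both bounds to all $a$ in the relevant ranges, after which I invoke the finer version of Lemma \ref{lem-p1q13} to obtain, for any $a$ satisfying $m(a)>1+C(\log a)^{13}/a^{3/2}$, a chain of inequalities forcing $m$ to decay geometrically along the sequence $a_{n+1}=a_n'+2\sqrt{a_n'}\log a_n'$, exactly as in the iteration at the end of the proof of Theorem \ref{theo-non-monotone}. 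A contradiction arises after $O(\sqrt{a}/\log a)$ steps since $0.95^{\sqrt{a}/(4\log a)}<(\log a)/\sqrt{a}$.

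The main obstacle is the loss incurred by replacing the critical identity $\nu'(b)=0$ with the weaker $\nu'(b_k)=O(1/b_k)$ and $\nu''(b_k)\geq -O(1/b_k^2)$: I must verify that these perturbations contribute only terms of size $O((\log b_k)^{13}/b_k^{3/2})$ to the square of $\nu(b_k)$ in the key inequality $p_1^2\nu^4(b_k)\geq 1+O((\log b_k)^{13}/b_k^{3/2})$. Since $1/b_k^2\ll(\log b_k)^{13}/b_k^{3/2}$ this is automatic, but the bookkeeping must be done explicitly using the now-available third-derivative estimate (Proposition \ref{prop-lambda33}) to control the remainder in the Taylor expansion of $\nu$ near $b_k$. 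Once this is confirmed, the proof of the general case is a mechanical transcription of the non-monotonic argument.
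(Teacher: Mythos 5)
Your overall plan is the right one and matches the paper's: dispose of the non-monotonic case, and in the monotonic case produce a sequence of almost-critical points via either MVT (if $\nu'$ is monotone) or local extrema of $\nu'$ (if not), then run the non-monotonic argument there. However, there is a genuine gap in the quantitative bookkeeping at these almost-critical points, and your attempt to dismiss it is based on the wrong comparison.

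You produce points $b_k$ with $\nu''(b_k)\geq -O(1/b_k^2)$ (carried over from the \emph{first-level} argument, where $\nu'(a)=o(1/a)$). But the expression
$$\nu''(b) = \left(\tfrac{d^2}{dt^2}\log f(x_b)\right)^{-1}\Bigl[\bigl(\lambda''(b)+\tfrac{1}{2x_b^2}\bigr)\tfrac{d^2}{dt^2}\log f(x_b)-1\Bigr]+O\bigl(\tfrac{(\log b)^8}{b^{5/2}}\bigr)$$
gives, upon multiplying through by $\tfrac{d^2}{dt^2}\log f(x_b)\approx x_b\approx b$,
$$\bigl(\lambda''(b_k)+\tfrac{1}{2x_{b_k}^2}\bigr)\tfrac{d^2}{dt^2}\log f(x_{b_k})-1 \;\geq\; -b_k\cdot O(1/b_k^2) - O\bigl((\log b_k)^8/b_k^{3/2}\bigr) = -O(1/b_k).$$
So with your hypothesis you only obtain $\geq 1 - O(1/b_k)$, not $\geq 1 + O((\log b_k)^{8}/b_k^{3/2})$, and $1/b_k\gg(\log b_k)^{13}/b_k^{3/2}$. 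Your remark "Since $1/b_k^2\ll(\log b_k)^{13}/b_k^{3/2}$ this is automatic" compares $\nu''$ against the wrong quantity: the $\nu''(b_k)$ error gets amplified by a factor of $b_k$ before it lands on the target side of the key inequality, so the threshold you must beat is $(\log b_k)^{13}/b_k^{5/2}$, and $1/b_k^2$ fails by a factor of $\sqrt{b_k}/(\log b_k)^{13}$. The fix is exactly what the paper does: from the now-available Corollary \ref{cor-tt-2} (carried through Section \ref{sec-6}), one has the pointwise bound $\nu'(a)=O((\log a)^8/a^{3/2})$ for \emph{all} large $a$, and feeding this sharper bound into the mean-value argument yields $\nu''(a_j)=O((\log a_j)^8/a_j^{5/2})$ in the $\nu'$-monotone subcase, which is the right order. (In the $\nu'$-non-monotone subcase you get $\nu''=0$ at a local extremum of $\nu'$, which is of course also fine.) You already cite the sharp bound for $\tilde t_a-t_{a+1}$; you just need to actually carry it through to the $\nu''$ bound rather than reverting to the coarser $O(1/a_j^2)$.
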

\begin{proof}
  
Let $p_1=\tilde{p}(m(a))$ and $q_1=\tilde{q}(m(a))$, then we get 
$$(1-C_1\frac{(\log a)^{13}}{a^{3/2}})\frac{1}{\sqrt{p_1}}< \nu(a')< (1+C_1\frac{(\log a)^{13}}{a^{3/2}})\frac{1}{\sqrt{q_1}},$$
for all $a'\geq a_1=a+2\sqrt{a}\log a$. Since 
$$q_1\nu^2(b)(1-C_2\frac{(\log a)^{13}}{a^{3/2}})\leq x_{b+1}\gamma(b)\leq p_1\nu^2(b)(1+C_2\frac{(\log a)^{13}}{a^{3/2}}), $$
 and
 \begin{equation*}
  q_1\nu^2(b)(1-C_3\frac{(\log a)^{13}}{a^{3/2}})\leq \frac{\Gamma(b)}{x_b}\leq p_1\nu^2(b)(1+C_3\frac{(\log a)^{13}}{a^{3/2}})\frac{1}{\sqrt{q_1}},
 \end{equation*}
for $b\geq a_1$, for some $C_2,C_3$ independent of $a$, we get
\begin{equation*}
  \frac{q_1}{p_1}(1-C_4\frac{(\log a)^{13}}{a^{3/2}})\leq x_{b+1}\gamma(b)\leq \frac{p_1}{q_1}(1+C_4\frac{(\log a)^{13}}{a^{3/2}}),
 \end{equation*}
 and
 \begin{equation*}
  \frac{q_1}{p_1}(1-C_5\frac{(\log a)^{13}}{a^{3/2}})\leq \frac{\Gamma(b)}{x_b}\leq \frac{p_1}{q_1}(1+C_5\frac{(\log a)^{13}}{a^{3/2}}),
 \end{equation*}
for $b\geq a_1$, for some $C_4,C_5$ independent of $a$. 
Therefore \begin{equation*}
  \frac{q_1}{p_1}(1-C_6\frac{(\log a)^{13}}{a^{3/2}})\leq (x_b+\frac12)\lambda''(b)\leq \frac{p_1}{q_1}(1+C_6\frac{(\log a)^{13}}{a^{3/2}}),
 \end{equation*}
 and
 \begin{equation*}
  \frac{q_1}{p_1}(1-C_6\frac{(\log a)^{13}}{a^{3/2}})\leq \frac{1}{x_b}\frac{d^2}{dt^2}\log f(x_b)\leq \frac{p_1}{q_1}(1+C_6\frac{(\log a)^{13}}{a^{3/2}}),
 \end{equation*}
for $b\geq a_1$, for some $C_6$ independent of $a$

So let $a_1'$ satisfy $a_1'-\log a_1'\sqrt{a_1'}=a_1$, then $$M_1(a_1')>\frac{q_1}{p_1}(1-C_6\frac{(\log a)^{13}}{a^{3/2}})$$ and $$M_2(a_1')< \frac{p_1}{q_1}(1+C_6\frac{(\log a)^{13}}{a^{3/2}}).$$
 Therefore, \begin{eqnarray*}
  m(a_1')<\frac{p^2_1}{q^2_1}(1+3C_6\frac{(\log a)^{13}}{a^{3/2}}) .
 \end{eqnarray*}
Then let $C_0$ be the constant $C$ in the statement of lemma \ref{lem-p1q13}.

 We claim that for $a$ large enough we have $$m(a)<1+(4C_0+120C_6)\frac{(\log a)^{13}}{a^{3/2}}.$$
It is enough to show that for $a$ large engough, $$m(2a)\leq 1+(2C_0+60C_6)\frac{(\log a)^{13}}{a^{3/2}}.$$ 
Assume this is not the case, then $m(a')>1+(2C_0+60C_6)\frac{(\log a)^{13}}{a^{3/2}},$
for $a'\in [a,2a]$. Therefore $$m(a')>1+(C_0+42C_6)\frac{(\log a')^{13}}{a^{'3/2}},$$ for $a'\in [a,2a]$. 
So we can apply the conclusion of lemma \ref{lem-p1q13}. 
\begin{eqnarray*}
  m(a_1')-1&<&0.9m(a)(1+3C_6\frac{(\log a)^{13}}{a^{3/2}})+3C_6\frac{(\log a)^{13}}{a^{3/2}}\\
  &<&0.95(m(a)-1).
\end{eqnarray*}
 
 Then for $n\geq 1$ we define $a_{n+1}=a_n'+2\sqrt{a_n'}\log a_n'$ and $a_{n+1}'$ accordingly and inductively. And by induction, we get 
 $$m(a_n')-1<0.95^{n-1}(m(a)-1).$$
 It is easy to see that when $n=\frac{\sqrt{a}}{4\log a}+1$, $a_{n}'<2a$. But $$0.95^{\frac{\sqrt{a}}{4\log a}}<\frac{(\log a)^{12}}{a^{3/2}},$$
 So we have arrived at a contraction.
\end{proof}
\subsection*{the general case }

\begin{theorem}\label{theo-general-log-3}
  There exists $C>0$ such that for $a$ large enough such that $$m(a)<1+C\frac{(\log a)^{13}}{a^{3/2}}.$$
\end{theorem}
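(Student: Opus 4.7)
The plan is to reduce the monotone situation to the non-monotonic case just established, precisely echoing the reduction of theorem \ref{theo-non-monotone} to theorem \ref{theo-general-log}. The non-monotonic argument exploited the sign of $\nu''$ at local extrema of $\nu$; in the monotone regime one must manufacture a sequence of substitute points where $\nu''$ is controllably close to zero.

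Concretely, suppose $\nu(a)$ is monotone for $a$ large and, by symmetry, that $\nu'(a)\geq 0$ eventually. Since $\nu$ is bounded by the estimates of section \ref{sec-6}, $\int_{a_0}^{\infty}\nu'(a)\,da$ converges, forcing $\nu'(a)\to 0$ and making the asymptotic density of $\{a\leq T:\nu'(a)\geq 1/a\}$ vanish. Split into two subcases: if $\nu'$ is itself eventually monotone then $\nu'(a)=o(1/a)$ and a sequence $a_j\to\infty$ exists with $\nu''(a_j)=O(1/a_j^2)$; otherwise, between successive local extrema of $\nu'$ lies a point where $\nu''=0$. In either subcase we obtain arbitrarily large $a_j$ with $\nu''(a_j)\geq -\eta(a_j)$ for some $\eta(a_j)=o((\log a_j)^{13}/a_j^{3/2})$. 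At such a point the identity for $\nu''(b)$ recorded just before the non-monotonic subsection gives
\[
\Bigl(\lambda''(a_j)+\tfrac{1}{2x_{a_j}^2}\Bigr)\tfrac{d^2}{dt^2}\log f(x_{a_j})\geq 1+O\bigl((\log a_j)^{13}/a_j^{3/2}\bigr),
\]
which is exactly the inequality used at a local minimum of $\nu$ in the non-monotonic proof. Consequently the steps based on propositions \ref{prop-d2-logf} and \ref{prop-d2-lambda} together with lemmas \ref{lem-redefine} and \ref{lem-redefine2} transfer verbatim, yielding the two-sided pinch
\[
\frac{1-C(\log a_j)^{13}/a_j^{3/2}}{\sqrt{\tilde p(m(a_j))}}\leq \nu(a_j)\leq \frac{1+C(\log a_j)^{13}/a_j^{3/2}}{\sqrt{\tilde q(m(a_j))}}.
\]
Once this pinch holds on a sequence $\{a_j\}$ intersecting every interval of length $2\sqrt{a}\log a$, lemma \ref{lem-p1q13} and the geometric contraction $m(a_{n+1}')-1<0.95(m(a_n)-1)$ from the preceding theorem apply without change and deliver the bound $m(a)\leq 1+C(\log a)^{13}/a^{3/2}$.

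The main obstacle is the quantitative denseness requirement in the last step: the chosen points $a_j$ must land inside every interval of length $2\sqrt{a}\log a$. The monotone-$\nu'$ subcase only supplies sparse information through a Chebyshev-type argument on $\int\nu'$, so one complements it by using the derivative bounds on $\nu''$ implicit in propositions \ref{prop-lambda5} and \ref{prop-lambda43}, which force the variation of $\nu''$ over an interval of length $\sqrt{a}\log a$ to be $o((\log a)^{13}/a^{3/2})$; a single good $a_j$ inside the interval then transfers its $\nu''$-smallness across the whole interval. Once this denseness is verified, the rest of the proof is a clean reprise of theorem \ref{theo-general-log}, and the contraction constant $0.95$ together with the choice $n\sim \sqrt{a}/(4\log a)$ will again produce the desired contradiction if $m(a)$ ever exceeds $1+C(\log a)^{13}/a^{3/2}$ with $C$ sufficiently large.
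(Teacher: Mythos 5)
Your reduction mirrors the structure of the paper's proof for the monotone case, but there is a quantitative gap in the first subcase ($\nu'$ eventually monotone) that invalidates the stated pinch.

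You derive the existence of a sequence $a_j$ with $\nu''(a_j)=O(1/a_j^2)$ from the crude observation that $\int\nu'<\infty$ and a density argument. You then claim this is enough because $1/a_j^2=o\bigl((\log a_j)^{13}/a_j^{3/2}\bigr)$. That comparison is the wrong one. The identity used at a local minimum,
\[
\nu''(b)=\Bigl(\tfrac{d^2}{dt^2}\log f(x_b)\Bigr)^{-1}\Bigl[\bigl(\lambda''(b)+\tfrac{1}{2x_b^2}\bigr)\tfrac{d^2}{dt^2}\log f(x_b)-1\Bigr]+O\Bigl(\tfrac{(\log b)^8}{b^{5/2}}\Bigr),
\]
forces you, upon isolating the bracketed quantity, to multiply $\nu''(a_j)$ by $\tfrac{d^2}{dt^2}\log f(x_{a_j})\sim a_j$. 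With $\nu''(a_j)=O(1/a_j^2)$ this produces an error of size $O(1/a_j)$, which dominates $(\log a_j)^{13}/a_j^{3/2}$ by a factor of $a_j^{1/2}$; the inequality you assert, $(\lambda''+\tfrac{1}{2x^2})\tfrac{d^2}{dt^2}\log f\geq 1+O((\log a_j)^{13}/a_j^{3/2})$, does not follow. The paper avoids this by invoking the refined bound $\nu'(a)=O\bigl((\log a)^8/a^{3/2}\bigr)$ computed just before the non-monotonic subsection (which itself relies on corollary~\ref{cor-tt-2} and the section~\ref{sec-6} redefinition of $\nu$); from monotonicity of $\nu'$ and the mean value theorem over $[a,2a]$ this upgrades the pseudo-extremal bound to $\nu''(a_j)=O\bigl((\log a_j)^8/a_j^{5/2}\bigr)$, which, after multiplication by $a_j$, lands at the acceptable $O\bigl((\log a_j)^8/a_j^{3/2}\bigr)$. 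You need to replace your $o(1/a)$ / $O(1/a_j^2)$ chain by this sharper estimate; the $\nu'$-nonmonotone subcase, where exact zeros of $\nu''$ are available, and your subsequent reprise of lemma~\ref{lem-p1q13} and the geometric contraction are then fine. Your worry about quantitative denseness of the $a_j$ is reasonable to flag, but the resolution you sketch sits on top of the already-too-coarse $\nu''$ bound and therefore does not repair the argument; once the sharper $\nu''$ input is in place, the issue is absorbed exactly as in the non-monotonic proof.
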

\begin{proof}[Proof of theorem \ref{theo-general-a32}]
  The non-monotonic case has been proved in theorem \ref{theo-non-monotone}. If $\nu(a)$ is monotone, we assume, for example, 
  that $\nu'(a)\geq 0$ for $a$ large enough.
  
  We have seen that $\nu'(a)=O(\frac{(\log b)^{8}}{b^{3/2}})$. 
  There are two possibilities:
  \begin{itemize}
    \item  $\nu'(a)$ is monotone, then there is a sequence $a_j\to\infty$ so that $\nu''(a_j)=O(\frac{(\log b)^{8}}{b^{5/2}})$; 
    \item $\nu'(a)$ is not monotone, then around a local minimum where $|\nu'(a)|<\frac{1}{a}$, we have $\nu''(a)=0$.
  \end{itemize}
  In both cases, we can apply the argument for the non-monotonic case to get the claimed estimates in the theorem.
  The case $\nu'(a)< 0$ is similar.
\end{proof}
The following corollary then follows.
\begin{cor}\label{cor-final}
  We have 
  $$\frac{d^2}{dt^2}\log f(x)=x+O(\frac{(\log x)^{13}}{x^{1/2}}),$$
  and $$(x_a+\frac12)\lambda''(a)=1+O(\frac{(\log x)^{13}}{x^{3/2}}).$$
\end{cor}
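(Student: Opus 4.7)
The plan is to extract the corollary directly from Theorem~\ref{theo-general-a32} combined with Theorem~\ref{theo-log9-a}. By the definition \eqref{e-B0} we have $m(a) = \frac{M_2(a)}{M_1(a)}(1 + B_0 (\log a)^9/a^{3/2})$, and since $(\log a)^9/a^{3/2}$ is of smaller order than $(\log a)^{13}/a^{3/2}$, Theorem~\ref{theo-general-a32} gives $\frac{M_2(a)}{M_1(a)} \leq 1 + O((\log a)^{13}/a^{3/2})$. Because $M_1(a)$ is uniformly bounded (in fact close to $1$ by Theorem~\ref{theo-log9-a}), this is equivalent to the additive gap
\[
M_2(a) - M_1(a) = O\bigl((\log a)^{13}/a^{3/2}\bigr).
\]

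The next step is to promote this gap estimate to two-sided absolute bounds on $M_1$ and $M_2$. Here I would invoke Theorem~\ref{theo-log9-a}, which asserts $\frac{1}{x}\frac{d^2}{dt^2}\log f(x) \to 1$ as $x \to \infty$ and $(x_a + \tfrac12)\lambda''(a) \to 1$ as $a \to \infty$. Because the infimum (resp. supremum) of a function tending to $1$, taken over an unbounded tail, is always $\leq 1$ (resp. $\geq 1$), each of $M_1'(b), M_1''(b)$ is at most $1$ and each of $M_2'(b), M_2''(b)$ is at least $1$. Consequently $M_1(b) \leq 1 \leq M_2(b)$, and combined with the gap estimate this yields
\[
1 - M_1(b) = O\bigl((\log b)^{13}/b^{3/2}\bigr), \qquad M_2(b) - 1 = O\bigl((\log b)^{13}/b^{3/2}\bigr).
\]

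The specific evaluations $x = x_b$ and $a = b$ lie inside the windows $\{x > x_b - \sqrt{b}\log b\}$ and $\{a > b - \sqrt{b}\log b\}$ used to define $M_j'(b)$ and $M_j''(b)$, so
\[
M_1(b) \leq \frac{1}{x_b}\frac{d^2}{dt^2}\log f(x_b) \leq M_2(b), \qquad M_1(b) \leq (x_b + \tfrac12)\lambda''(b) \leq M_2(b).
\]
Multiplying the first inequality by $x_b$, and using $x_b \asymp b$ (which follows from Theorem~\ref{theo-log9-a} upon integrating $\frac{dx_a}{da} = x_a/\frac{d^2}{dt^2}\log f(x_a) = 1 + o(1)$, so that $(\log b)^{13}/b^{3/2}$ and $(\log x_b)^{13}/x_b^{3/2}$ differ only by a bounded factor), one obtains the two displayed estimates of the corollary.

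The substantive work is all inside Theorem~\ref{theo-general-a32}; what remains is essentially bookkeeping. The only conceptual point that is not purely algebraic is the sandwich $M_1(b) \leq 1 \leq M_2(b)$, which is essential for converting the ratio-type estimate on $m(a)$ into a two-sided absolute error. Without invoking Theorem~\ref{theo-log9-a} to anchor the pointwise limit at $1$, Theorem~\ref{theo-general-a32} would only control the oscillation of the relevant functions, not their distance from $1$.
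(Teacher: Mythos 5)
Your argument is correct and fills in exactly the bookkeeping that the paper elides when it asserts the corollary ``then follows'' from Theorem~\ref{theo-general-a32}. The sandwich $M_1(b)\le 1\le M_2(b)$, derived from the fact that $M_1$ and $M_2$ are infima and suprema over unbounded tails of quantities tending to $1$ by Theorem~\ref{theo-log9-a}, is precisely the observation needed to convert the ratio bound on $m(a)$ into the two-sided additive estimates, and the substitution $x=x_b$ with $x_b\asymp b$ completes the translation to the stated form.
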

So $$\frac{du}{dx}=\frac{1}{x}\frac{d^2}{dt^2}\log f(x)=1+O(\frac{(\log x)^{13}}{x^{3/2}}).$$
Therefore $$\omega(\beta)=\lim_{x\to \infty}(u-x) \textit{ exists}.$$
Then by L'Hospital's rule, we get theorem \ref{thm-main}. Moreover, we have 
$$u-x-\omega(\beta)=O(\frac{(\log x)^{13}}{x^{1/2}}).$$
So $$(\log f(x))'=1+\frac{\omega(\beta)}{x}+O(\frac{(\log x)^{13}}{x^{3/2}}),$$
which implies that $$C_\beta=\lim_{x\to\infty }\frac{f(x)}{x^{\omega(\beta)}e^x}  \textit{ exists}.$$

\

Now we come to the proof of theorem \ref{thm-second}. Consider a pair of numbers $0\leq \beta_1<\beta_2<1$. We make a normalization so that $$f_{\beta_1}(0)=f_{\beta_2}(0)=1.$$ We denote by $c(i,\beta_j)$ the coeffient function for $f_{\beta_j}$, $j=1,2$. 
Consider the set $$Q=\{\frac{c(i,\beta_2)}{c(i,\beta_1)}|i\in \R,i\geq 1 \}.$$
Since $\int_{0}^{\infty}\frac{1}{f_{\beta_1}}dx>\int_{0}^{\infty}\frac{1}{f_{\beta_2}}dx$, we have $\sup Q>1$.

A short argument at the beginning of section 4 in \cite{sun-sun-2} shows that the condition $$\int_{0}^{\infty }\frac{c(i,\beta_1)}{f_{\beta_1}}dx=\int_{0}^{\infty }\frac{c(i,\beta_2)}{f_{\beta_2}}dx=1,$$
implies the following
\begin{equation}\label{e-cb1}
	\frac{c(i,\beta_2)}{c(i,\beta_1)}<\int_{0}^{\infty}\frac{c(i,\beta_1)x^i}{f_{\beta_1}}\frac{f_{\beta_2}}{f_{\beta_1}}dx,
	\end{equation}
  and
	\begin{equation}\label{e-cb2}
    \frac{c(i,\beta_1)}{c(i,\beta_2)}<\int_{0}^{\infty}\frac{c(i,\beta_2)x^i}{f_{\beta_2}}\frac{f_{\beta_1}}{f_{\beta_2}}dx.
	\end{equation} 	
	The first formula implies that $\sup Q$ cannot be attained at finite $i$. Let $k(i)=\frac{c(i,\beta_2)}{c(i,\beta_1)}$, then the main arguments in section 4 in \cite{sun-sun-2} actually shows the following:
  \begin{lem}\label{lem-last}
    We have $$\liminf_{i\to\infty }k(i)=\limsup_{i\to\infty }k(i).$$
  \end{lem}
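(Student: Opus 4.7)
The plan is to sandwich $k(i)$ between two quantities asymptotic to a common limit, by exploiting the concentration of the probability measures $d\mu_i^{(j)} = c(i,\beta_j) x^i / f_{\beta_j}(x)\,dx$ on $[0,\infty)$. Writing $R(x) = f_{\beta_2}(x)/f_{\beta_1}(x)$, the inequalities \ref{e-cb1} and \ref{e-cb2} read
$$
k(i) \;<\; \int_0^\infty R\,d\mu_i^{(1)}, \qquad \frac{1}{k(i)} \;<\; \int_0^\infty \frac{1}{R}\,d\mu_i^{(2)}.
$$
I plan to show that, as $i \to \infty$, both integrals converge to $R$ evaluated at the concentration point, and that the two concentration points yield the same asymptotic value of $R$.

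First, I would apply Theorem \ref{theo-log9-a} and Corollary \ref{cor-final} to each of $f_{\beta_1}$ and $f_{\beta_2}$. These yield a Gaussian-type concentration of $\mu_i^{(j)}$ around the point $x_i^{(j)}$ determined by $u_{\beta_j}(x_i^{(j)}) = i$, with tails decaying faster than any polynomial outside a window of radius $\sqrt{i}\log i$. Combining with Theorem \ref{thm-main}, $x_i^{(j)} = i - \omega(\beta_j) + o(1)$, so in particular $|x_i^{(1)} - x_i^{(2)}| = O(1)$. The uniformity of these estimates in $\beta \in \{\beta_1, \beta_2\}$ is a matter of bookkeeping since the constants in Sections \ref{sec-3}--\ref{sec-6} depend continuously on $\beta$.

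Next, I would estimate the variation of $R$ on the concentration window. From Theorem \ref{thm-main},
$$
x\,(\log R)'(x) \;=\; u_{\beta_2}(x) - u_{\beta_1}(x) \;=\; \omega(\beta_2) - \omega(\beta_1) + o(1),
$$
so $|(\log R)'(x)| = O(1/x)$ and the total variation of $\log R$ across a window of radius $\sqrt{i}\log i$ around $x_i^{(j)}$ is $O((\log i)/\sqrt{i}) = o(1)$. Since $R$ has polynomial growth at infinity (from $f_{\beta_j} \sim C_{\beta_j}\, x^{\omega(\beta_j)} e^x$), the super-polynomial tails of $\mu_i^{(j)}$ easily absorb the contribution from outside the window. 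Consequently,
$$
\int R\,d\mu_i^{(1)} = R(x_i^{(1)})(1+o(1)), \qquad \int R^{-1}\,d\mu_i^{(2)} = R(x_i^{(2)})^{-1}(1+o(1)).
$$
Since $|x_i^{(1)} - x_i^{(2)}| = O(1)$ and $(\log R)'(x) = O(1/x)$, we further get $R(x_i^{(1)})/R(x_i^{(2)}) = 1 + O(1/i)$. The two bounds on $k(i)$ therefore coalesce in the limit, forcing $\liminf k(i) = \limsup k(i)$.

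The main obstacle I anticipate is making the concentration estimates truly uniform in the pair $\beta_1,\beta_2$; though the machinery of Sections \ref{sec-3}--\ref{sec-6} was written for a single fixed $\beta$, all the constants depend continuously on $\beta$ on compact subsets of $[0,1)$, so I expect only routine bookkeeping is required to adapt the arguments to $f_{\beta_1}$ and $f_{\beta_2}$ simultaneously. The substantive input is the pointwise asymptotic $u_\beta(x) - x \to \omega(\beta)$, which has already been established, and controls the variation of $R$ precisely enough to make the squeeze argument work.
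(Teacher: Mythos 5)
Your approach is genuinely different from the paper's own proof, and it is in fact essentially the route the paper explicitly flags as the \emph{longer} one from \cite{sun-sun-2}, for which Corollary \ref{cor-final} was invoked precisely to shortcut. The paper's proof works entirely on the coefficient side: since $\log k(i)=\lambda_1(i)-\lambda_2(i)$, it reads off $(\log k)''(i)=\lambda_1''(i)-\lambda_2''(i)$ from Corollary \ref{cor-final}, obtains $(\log k)'(i)=\frac{\delta_2-\delta_1}{i}+O\bigl(\frac{(\log i)^{13}}{i^{3/2}}\bigr)$ after pinning the integration constant via $\lambda'(a)-\log a\to 0$, and then notes that regardless of the sign of $\delta_2-\delta_1$ the quantity $\log k(i)$ either converges (integrable error when $\delta_1=\delta_2$) or diverges monotonically to $\pm\infty$, so $\lim k(i)$ exists in $[0,\infty]$. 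This is two lines and uses none of the concentration machinery, the inequalities \eqref{e-cb1}--\eqref{e-cb2}, or any control of $R$.

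Your squeeze argument has a genuine gap in its final step. From $R(x_i^{(2)})(1-o(1))<k(i)<R(x_i^{(1)})(1+o(1))$ and $R(x_i^{(1)})/R(x_i^{(2)})=1+O(1/i)$ you may only conclude $k(i)/R(x_i^{(1)})\to 1$. This does not by itself force $\liminf k(i)=\limsup k(i)$: if $R(x_i^{(1)})$ oscillated, $k(i)$ would too. You still need to show that $R(x_i^{(1)})$ converges in $[0,\infty]$. This can be done, but it requires more than the $o(1)$ you quote from Theorem \ref{thm-main}. You need the quantitative rate $u_\beta(x)-x-\omega(\beta)=O\bigl(\frac{(\log x)^{13}}{x^{1/2}}\bigr)$ from Corollary \ref{cor-final}, which yields $(\log R)'(x)=\frac{\omega_2-\omega_1}{x}+O\bigl(\frac{(\log x)^{13}}{x^{3/2}}\bigr)$: if $\omega_2\neq\omega_1$ this is eventually one-signed so $\log R$ is eventually monotone, and if $\omega_2=\omega_1$ the derivative is integrable so $\log R$ converges; either way $\lim_{x\to\infty}R(x)$ exists in $[0,\infty]$ and the sandwich closes. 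A minor citation issue: the relation $u_\beta(x)-x\to\omega(\beta)$ does not follow from Theorem \ref{thm-main} (L'Hospital runs the other way); it is actually the statement the paper proves first, from which Theorem \ref{thm-main} is deduced.
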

  Using corollary \ref{cor-final}, we can give a shorter proof. 
 \begin{proof}
  Denote by $\delta_j=\lim_{x\to \infty }(u-x)$, $j=1,2$ and denote by $x_{j,a}$ the $x_a$ correspongding to $f_{\beta_j}$. Then $$a=x_{a_j}+\delta_j+O(\frac{(\log a)^{13}}{x^{1/2}}).$$
  So \begin{eqnarray*}
    \frac{d^2}{di^2}\log k(i)&=&\frac{1}{a-\delta_1+\frac{1}{2}}-\frac{1}{a-\delta_2+\frac{1}{2}}+O(\frac{(\log a)^{13}}{x^{5/2}})\\
    &=&\frac{\delta_2-\delta_1}{a^2}+O(\frac{(\log a)^{13}}{x^{5/2}})\\
  \end{eqnarray*}  
  Since $\lambda'(a)-\log a\to 0$ as $a\to \infty$, we have $$\frac{d}{di}\log k(i)=\frac{\delta_2-\delta_1}{a}+O(\frac{(\log a)^{13}}{x^{3/2}}).$$
  No matter the relation between $\delta_1$ and $\delta_2$, we have $$\liminf_{i\to\infty }k(i)=\limsup_{i\to\infty }k(i).$$
 \end{proof}

So $\inf Q$ is attained at some $i_0$. Assume $k_{i_0}\leq 1$, then formula \ref{e-cb2} gives us a contraction. Therefore, we must have $k_i>1$ for all $i\geq 1$. As a direct corollary, we get that $\omega(\beta)$ is increasing. 

To prove the upper bound, we replace the pair $(f_{\beta_1},f_{\beta_2})$ with $(f_{\beta},xe^x)$ in the preceeding argument. Write $f(x)=xe^x=\sum_{i=0}^{\infty}c(i) x^i$. Then since $f(x)$ also satisfies the condition that $\int_{i=0}^{\infty}\frac{c(i)x^i}{f(x)}dx=1$ for $i\geq 1$,
we consider again the set $$Q=\{\frac{c(i,\beta)}{c(i)}|i\in \R,i\geq 1 \}.$$
We renormalize $f_\beta$ so that $c(1,\beta)=1$. Then since $\int_{i=0}^{\infty}\frac{x}{f(x)}dx=\int_{i=0}^{\infty}\frac{x}{f_\beta(x)}dx$ and $c(0)=0$, we must have $\sup Q>1$. Then formula \ref{e-cb1} implies that $\sup Q$ cannot be obtained at finite $i$.
And the proof of lemma \ref{lem-last} can be repeated to show that $$\liminf_{i\to\infty }k(i)=\limsup_{i\to\infty }k(i)=\sup Q>1,$$ 
which implies that $$\omega(\beta)\leq \lim_{x\to \infty}\frac{\log (xe^x)-x}{\log x}=1.$$
\bibliographystyle{plain}

\bibliography{references}

\end{document}